\newtheorem{inizio}{Lemma}[section]
\newtheorem{theorem}[inizio]{Theorem}
\newtheorem{corollary}[inizio]{Corollary}
\newtheorem{proposition}[inizio]{Proposition}
\newtheorem{lemma}[inizio]{Lemma}
\newtheorem{definition}[inizio]{Definition}
\newtheorem{remark}[inizio]{Remark}
\newtheorem*{theoA}{Main Theorem}
\newcommand{\oo}{\mathscr{O}}
\newcommand{\Hom}{{\mathrm{Hom}}}
\newcommand{\ZZ}{\mathbb{Z}}
\newcommand{\CC}{\mathbb{C}}
\newcommand{\EE}{\mathscr{E}} 
\newcommand{\FF}{\mathscr{F}} 
\newcommand{\LL}{\mathscr{L}} 
\newcommand{\wA}{\widehat{A}}
\newcommand{\wS}{\widehat{S}}
\newcommand{\wR}{\widehat{R}}
\newcommand{\lr}{\longrightarrow}
\renewcommand{\to}{\longrightarrow}
\title{A new family of  surfaces with $p_g=q=2$ and $K^2=6$ whose Albanese map has degree $4$}
\author{Matteo Penegini and Francesco Polizzi}
\date{}
\begin{document}

\maketitle

\begin{abstract}
We construct a new family of minimal surfaces of general type with
$p_g=q=2$ and $K^2=6$, whose Albanese map is a quadruple cover of an
abelian surface with polarization of type $(1, \,3)$. We also show
that this family provides an irreducible component of the moduli
space of surfaces with $p_g=q=2$ and $K^2=6$. Finally, we prove that
such a component is generically smooth of dimension $4$ and that it
contains the $2$-dimensional family of product-quotient examples
previously constructed by the first author. The main tools we use
are the Fourier-Mukai transform and the Schr\"odinger representation
of the finite Heisenberg group $\mathscr{H}_3$.
\end{abstract}

\Footnotetext{{}}{\textit{2010 Mathematics Subject Classification}:
14J29, 14J10}

\Footnotetext{{}} {\textit{Keywords}: Surface of general type,
Albanese map, Fourier-Mukai transform}



\section{Introduction}\label{sec.intro}

In recent years, both the geographical problem and the fine
classification for \emph{irregular} algebraic surfaces (i.e.
surfaces with irregularity $q>0$) have attracted the attention of
several authors; in particular minimal surfaces of general type with
$\chi(\oo)=1$, i.e. $p_g=q$, were deeply investigated.

In this case some well-known results imply $p_g \leq 4$ . While
surfaces with $p_g=q=4$ and $p_g=q=3$ have been completely
described, see \cite{D82}, \cite{CCML98}, \cite{HP02}, \cite{Pi02},
the classification of those with $p_g=q=2$ is still missing (see
\cite{PP10} and \cite{PP11} for a recent account on this topic). As
the title suggests, in the present paper we consider some new
surfaces $\widehat{S}$ with $p_g=q=2$ and $K^2=6$ whose Albanese map
$\hat{\alpha} \colon \wS \to \wA:=\textrm{Alb}(\wS)$ is a quadruple
cover of an abelian surface $\wA$.

Our construction presents some analogies with the one
 presented in \cite{CH06} and \cite{PP10} for the case $p_g=q=2$ and $K^2=5$.
 Indeed, in both situations the
Tschirnhausen bundle
 $\mathscr{E}^{\vee}$ associated with the Albanese cover
 is of the form
$\Phi^{\mathscr{P}}(\mathscr{L}^{-1})$, where $\mathscr{L}$ is a
polarization on $A$ (the dual abelian variety of $\wA$) and
$\Phi^{\mathscr{P}}$ denotes the Fourier-Mukai transform. More
precisely, in the case studied in \cite{CH06} the surfaces are
 triple covers, $\mathscr{E}^{\vee}$ has rank $2$ and $\mathscr{L}$ is a polarization of type $(1, \,
2)$; in our setting, instead, the cover has degree $4$, the bundle
$\mathscr{E}^{\vee}$ has rank $3$ and $\mathscr{L}$ is a
polarization of type $(1, \, 3)$.

The results of the paper can be summarized as follows, see Theorem
\ref{theo.main}.

\begin{theoA}
There exists a $4$-dimensional family $\mathcal{M}_{\Phi}$ of
minimal surfaces of general type with $p_g=q=2$ and $K^2=6$ such
that, for the general element $\widehat{S} \in \mathcal{M}_{\Phi}$,
the canonical class $K_{\wS}$ is ample and the Albanese map
$\hat{\alpha}\colon \widehat{S} \longrightarrow \widehat{A}$ is a
finite cover of degree $4$.

The Tschirnhausen bundle $\EE^{\vee}$ associated with
$\hat{\alpha}$ is isomorphic to
$\Phi^{\mathscr{P}}(\mathscr{L}^{-1})$, where $\mathscr{L}$ is a
polarization of type $(1, \, 3)$ on $A$.

The family $\mathcal{M}_{\Phi}$ provides an irreducible component of
the moduli space $\mathcal{M}_{2, \, 2, \,6}^{\emph{can}}$ of
canonical models of minimal surfaces of general type with $p_g=q=2$
and $K^2=6$. Such a component is generically smooth and contains the
$2$-dimensional family of product-quotient surfaces constructed by
the first author in \emph{\cite{Pe11}}.
\end{theoA}

The Main Theorem is obtained by extending the construction given in
\cite{CH06} to the much more complicated case of quadruple covers.
More precisely, in order to build a quadruple cover $\hat{\alpha}
\colon \widehat{S} \lr \widehat{A}$ with Tschirnhausen bundle
$\EE^{\vee}$, we first build a quadruple cover $\alpha \colon S \lr
A$ with Tschirnhausen bundle $\phi_{\LL^{-1}}^*\EE^{\vee}$ (here
$\phi_{\LL} \colon A \to \widehat{A}$ denotes the group homomorphism
sending $x \in A$ to $t_x^* \mathscr{L}^{-1} \otimes \mathscr{L} \in
\widehat{A}$) and then, by using the Schr\"odinger representation of
the finite Heisenberg group $\mathscr{H}_3$ on $H^0(A, \, \LL)$, we
identify those covers of this type that descend to a quadruple cover
$\hat{\alpha} \colon \widehat{S} \lr \widehat{A}$. For the general
surface $\wS$, the branch divisor $\widehat{B} \subset \wA$ of
$\alpha \colon \wS \lr \wA$ is a curve in the linear system
$|\widehat{\mathscr{L}}^{\otimes 2}|$, where $\widehat{\mathscr{L}}$
is a polarization of type $(1, \,3)$ on $\wA$, with six ordinary
cusps and no other singularities; such a curve is
$\mathscr{H}_3$-equivariant and can be associated with the dual of a
member of the Hesse pencil of plane cubics in $\mathbb{P}^2$.

Let us explain now the way in which this paper is organized.

In Section \ref{sec.prel} we set up notation and terminology and we
present some preliminaries. In particular we recall the theory of
quadruple covers developed by Casnati-Ekedahl \cite{CE96} and
Hahn-Miranda \cite{HM99} and we briefly describe the geometry of the
Hesse pencil.

Sections \ref{sec.constr}, \ref{sec.moduli}, \ref{sec.simple} are
devoted to the proof of the Main Theorem.

In Section \ref{sec.constr} we present our construction, we prove
that a general surface $\widehat{S}$ in our family is smooth and we
compute its invariants (Propositions \ref{prop.sing.covers} and
\ref{prop.num.invariants.cover}).

In Section \ref{sec.moduli} we examine the subset of the moduli
space corresponding to our surfaces, showing that it is generically
 smooth, of dimension $4$ and that its closure $\mathcal{M}_{\Phi}$ provides an
 irreducible component of $\mathcal{M}_{2, \, 2, \,
 6}^{\textrm{can}}$ (Proposition \ref{prop.irred.comp}).
 We also observe that the general surface in
 $\mathcal{M}_{\Phi}$ admits no pencil over a curve of strictly
 positive genus (Proposition \ref{rem-no-irr}).

In Section \ref{sec.simple} we prove that the moduli space
$\mathcal{M}_{2, \, 2, \,
 6}^{\textrm{can}}$ contains
a $3$-dimensional singular locus (Corollary \ref{cor.sing.loc.M}).
 Moreover, we show that the irreducible component $\mathcal{M}_{\Phi}$ contains the
$2$-dimensional family of product-quotient examples constructed by
the first author in \cite{Pe11} (Proposition \ref{wT-in-Mphi}).

Finally, in Section \ref{sec.open.prob} we present some open
problems.

The paper also contains two appendices. In the Appendix $1$ we
show the following technical result needed in the proof of the
Main Theorem: for a general choice of the pair $(A, \,
\mathscr{L})$, the three distinguished divisors coming from the
Schr\"odinger representation of the Heisenberg group
$\mathscr{H}_3$ on $H^0(A, \, \mathscr{L})$ are smooth and
intersect transversally. Appendix $2$ contains the computer
algebra script we used to compute the equation of the branch curve
$B$ of $\alpha \colon S \to A$; it is written in the Computer
Algebra System \verb|MAGMA|, see \cite{Magma}.


\bigskip
\textbf{Acknowledgments} Both authors were partially supported by
the DFG Forschergruppe 790 \emph{Classification of algebraic
surfaces and compact complex manifolds}.

F. Polizzi was partially supported by  Progetto MIUR di Rilevante
Interesse Nazionale \emph{Geometria delle Variet$\grave{a}$
Algebriche e loro Spazi di Moduli} and by GNSAGA-INdAM. He thanks
the Mathematisches Institut, Universit$\ddot{\textrm{a}}$t Bayreuth
for the invitation and the hospitality in the period
October-November 2011.

The authors are indebted with F. Catanese for sharing with them some
of his ideas on this subject. They are also grateful to I. Bauer, W.
Liu,  E. Sernesi, B. van Geemen for useful discussions and
suggestions. Finally they thank the referee, whose comments
considerably improved the presentation of these results.

\bigskip

\textbf{Notation and conventions.} We work over the field
$\mathbb{C}$ of complex numbers.

If $A$ is an abelian variety, we call $\wA:= \textrm{Pic}^0(A)$ its
dual abelian variety.

If $\mathscr{L}$ is a line bundle on $A$ we denote by
$\phi_{\mathscr{L}}$ the morphism $\phi_{\LL} \colon A
\longrightarrow \wA$ given by $x \mapsto t^*_x \LL \otimes
\LL^{-1}$. If $\LL$ is non-degenerate then $\phi_{\LL}$ is an
isogeny, whose kernel is denoted by $K(\LL)$. In this case the
\emph{index} of $\LL$ is the unique integer $i(\LL)$ such that
$H^j(A, \, \LL)=0$ unless $j=i(\LL)$.

Throughout the paper we use italic letters for line bundles  and
 capital letters for the corresponding Cartier divisors, so we write for
instance $\mathscr{L}=\oo_S(L)$. The corresponding complete linear
system is denoted either by $|\mathscr{L}|$ or by $|L|$.

If $L$ is an ample divisor on $A$, then it defines a positive line
bundle $\mathscr{L}=\oo_A(L)$, whose first Chern class is a
polarization on $A$. By abuse of notation we consider both the line
bundle $\mathscr{L}$ and the divisor $L$  as polarizations.

By \emph{surface} we mean a projective, non-singular surface $S$,
and for such a surface $\omega_S=\oo_S(K_S)$ denotes the canonical
class, $p_g(S)=h^0(S, \, \omega_S)$ is the \emph{geometric genus},
$q(S)=h^1(S, \, \omega_S)$ is the \emph{irregularity} and
$\chi(\mathscr{O}_S)=1-q(S)+p_g(S)$ is the \emph{Euler-Poincar\'e
characteristic}.

\section{Preliminaries} \label{sec.prel}
\subsection{Quadruple cover of algebraic varieties}
The two papers \cite{CE96} and  \cite{HM99} deal with the quadruple
covers of algebraic varieties; the former only considers the
Gorenstein case, whereas the latter develops the theory in full
generality. The main results are the following.

\begin{theorem}\emph{\cite[Theorems 1.6 and 4.4]{CE96}}\label{theo.casnati}
Let $Y$ be a smooth algebraic variety. Any quadruple Gorenstein
cover $f\colon X \longrightarrow Y$ is determined by a locally free
$\mathscr{O}_{Y}-$module $\EE$ of rank $3$, a locally free
$\mathscr{O}_{Y}-$module $\FF$ of rank $2$ with $\bigwedge^2 \FF
\cong \bigwedge^3 \EE^{\vee}$ and a general section $\eta \in H^0(Y,
\,  S^2\EE^{\vee} \otimes \FF^{\vee} )$.
\end{theorem}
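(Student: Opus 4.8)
The plan is to follow the strategy of Casnati--Ekedahl, namely to realize $X$ as a codimension-two subscheme of a $\mathbb{P}^2$-bundle over $Y$ through its relative canonical embedding, to read off $\EE$, $\FF$ and $\eta$ from the relative free resolution of the ideal sheaf of $X$, and then to run the construction backwards for the converse.

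\textbf{Step 1: the relative canonical embedding.} Since we work in characteristic zero, the trace map $\mathrm{tr}\colon f_*\oo_X\lr\oo_Y$ splits the structural inclusion $\oo_Y\hookrightarrow f_*\oo_X$, so $f_*\oo_X\cong\oo_Y\oplus\EE^{\vee}$ with $\EE^{\vee}:=\ker(\mathrm{tr})$ locally free of rank $3$; dualizing, via relative duality for the finite flat $f$, gives $f_*\omega_{X/Y}\cong\oo_Y\oplus\EE$. The first point to establish is that $\omega_{X/Y}$ determines a closed $Y$-immersion $i\colon X\hookrightarrow\mathbb{P}:=\mathbb{P}(\EE)$ over $Y$ (write $\pi\colon\mathbb{P}\lr Y$) with $f=\pi\circ i$, $i^{*}\oo_{\mathbb{P}}(1)\cong\omega_{X/Y}$, and $\EE$ pinned down by the formula above. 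This is the relative form of the classical fact that a finite Gorenstein scheme of length $d$ is canonically embedded in $\mathbb{P}^{d-2}$: on each fibre the map is the one attached to the hyperplane of sections of $\omega_{X_y}$ pairing trivially with $1\in\oo_{X_y}$, and one checks --- going through the short list of Artinian Gorenstein $\mathbb{C}$-algebras of length $4$ --- that this sub-linear-system is base-point free, separates points and tangent vectors, and in fact embeds $X_y$ non-degenerately; globality then follows by cohomology and base change once the relevant $R^{1}\pi_*$'s are seen to vanish.

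\textbf{Step 2: the resolution and the triple $(\EE,\FF,\eta)$.} Since $f$ is finite flat over the smooth variety $Y$ and $X$ is Gorenstein, $X$ is Cohen--Macaulay of codimension $2$ in $\mathbb{P}$. Fibrewise, $X_y\subset\mathbb{P}^2$ is a canonically embedded length-$4$ Gorenstein scheme, and a further case-by-case analysis shows that every such scheme is the complete intersection of a pencil of conics; in particular $h^0(\mathbb{P}^2,\mathcal{I}_{X_y}(2))=2$ and $h^1$ of it vanishes. Cohomology and base change then produce a locally free sheaf $\FF$ of rank $2$ on $Y$ --- essentially (the dual of) $\pi_*\mathcal{I}_X(2)$, compatibly with base change --- together with the relative regular section cutting out $X$; tracing through the identification one gets $\eta\in H^0(Y,S^2\EE^{\vee}\otimes\FF^{\vee})$, and its Koszul complex is the minimal free resolution of $\mathcal{I}_X$ on $\mathbb{P}$. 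Pushing this resolution down to $Y$ recovers $f_*\oo_X$, so $\EE$ and $\FF$ genuinely are the rank-$3$ and rank-$2$ bundles of the statement, while comparing the two computations of $\omega_{X/Y}$ --- from $i^{*}\oo_{\mathbb{P}}(1)$ on one side and from adjunction on $\mathbb{P}(\EE)$ together with the Koszul data on the other --- forces, after a determinant bookkeeping, the relation $\bigwedge^2\FF\cong\bigwedge^3\EE^{\vee}$.

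\textbf{Step 3: the converse and bijectivity.} Conversely, start from a triple $(\EE,\FF,\eta)$ with $\bigwedge^2\FF\cong\bigwedge^3\EE^{\vee}$ and $\eta$ general, and set $X:=Z(\eta)\subset\mathbb{P}(\EE)$, $f:=\pi|_X$. For general $\eta$ its restriction to each fibre is a pair of conics with no common component, so $\eta$ is a relative regular section; then $X$ is a codimension-$2$ local complete intersection in the smooth variety $\mathbb{P}$, hence Gorenstein, and $f\colon X\lr Y$ is finite and flat of degree $4$, all fibres having length $4$. Adjunction together with the determinant relation gives $\omega_{X/Y}\cong\oo_{\mathbb{P}}(1)|_X$, which is a line bundle, so $f$ is a quadruple Gorenstein cover. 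That the two assignments are mutually inverse then follows from canonicity: $\EE$ is intrinsic to $f$ (the trace-complement of $f_*\omega_{X/Y}$), the embedding $i$ is determined by $i^{*}\oo_{\mathbb{P}}(1)\cong\omega_{X/Y}$, and since $X$ is a codimension-$2$ complete intersection in $\mathbb{P}$ its minimal free resolution is the Koszul complex --- so it returns $(\FF,\eta)$ without ambiguity.

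\textbf{Expected main obstacle.} I expect the delicate part to be the interplay of Step 1 with the fibrewise input of Step 2: one must be sure that the \emph{canonical} relative embedding behaves uniformly over all of $Y$, in particular over the ramification locus, so that even curvilinear or ``fat-point'' fibres land non-degenerately with respect to conics. This uniformity is exactly what keeps $h^0(\mathcal{I}_{X_y}(2))$ (and $h^1$) constant, makes $\FF$ locally free, and yields the clean Koszul resolution globally rather than only over the étale locus; without it one would be left with a less rigid structure. On the converse side the remaining subtlety is to make the word ``general'' precise --- one needs $Z(\eta)$ to be finite and flat of degree $4$ over the whole of $Y$, an open condition that is non-empty under enough positivity of $\EE$ and $\FF$ --- and to check that no information is lost in the round trip $X\rightsquigarrow(\EE,\FF,\eta)\rightsquigarrow X$.
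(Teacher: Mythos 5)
This theorem is quoted verbatim from \cite{CE96} and the paper supplies no proof of its own, so there is nothing internal to compare against; your outline reproduces the actual Casnati--Ekedahl argument (relative canonical embedding of $X$ into the $\mathbb{P}^2$-bundle on the rank-$3$ summand of $f_*\omega_{X/Y}$, the fibrewise fact that a nondegenerate length-$4$ Gorenstein subscheme of $\mathbb{P}^2$ is a complete intersection of two conics, cohomology-and-base-change giving the rank-$2$ bundle $\FF$ and the Koszul resolution encoding $\eta$, and the generic-section converse), and it is correct in substance. One bookkeeping caveat: you set $\EE^{\vee}:=\ker(\mathrm{tr})$, whereas the paper's convention (stated right after Theorem 2.2, and flagged there as deliberately opposite to \cite{CE96}) is $f_*\oo_X=\oo_Y\oplus\EE$, i.e.\ $\EE=\ker(\mathrm{tr})$ and $\EE^{\vee}$ is the Tschirnhausen bundle whose projectivization receives $X$; with your labelling the pencil of conics $\FF\hookrightarrow\pi_*\oo_{\mathbb{P}}(2)$ lands in $S^2\EE\otimes\FF^{\vee}$ rather than $S^2\EE^{\vee}\otimes\FF^{\vee}$, so the duals in Steps 1--2 should be swapped to match the statement as printed.
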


\begin{theorem}\emph{\cite[Theorem 1.2]{HM99}}\label{theo.mir}
Let $Y$ be a smooth algebraic variety. Any quadruple cover $f\colon
X \longrightarrow Y$ is determined by a locally free
$\mathscr{O}_Y$-module $\mathscr{E}$ of rank $3$ and a totally
decomposable section $\eta \in H^0(Y, \, \bigwedge^2 S^2
\mathscr{E}^{\vee} \otimes \bigwedge^3 \mathscr{E})$.
\end{theorem}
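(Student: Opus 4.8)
The plan is to prove the theorem as a dictionary, functorial and compatible with base change, between finite flat covers $f\colon X\to Y$ of degree $4$ and pairs $(\mathscr{E},\eta)$ as in the statement: one constructs the two directions of the correspondence and checks they are mutually inverse. Since everything is Zariski-local on $Y$, the substance is a computation with the multiplication table of a rank $4$ commutative algebra, together with a gluing argument; the part that goes beyond Theorem~\ref{theo.casnati} is the behaviour over the locus where $f$ is not Gorenstein.

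First I would pass from $f$ to the locally free $\mathscr{O}_Y$-algebra $\mathscr{A}:=f_*\mathscr{O}_X$ of rank $4$, the cover being recovered as $X=\underline{\mathrm{Spec}}_Y\mathscr{A}$; so it suffices to classify such algebras. Over $\mathbb{C}$ the trace map $\mathrm{tr}\colon\mathscr{A}\to\mathscr{O}_Y$ splits the inclusion of the unit, yielding a canonical decomposition $\mathscr{A}=\mathscr{O}_Y\oplus\mathscr{E}^{\vee}$ with $\mathscr{E}^{\vee}:=\ker(\mathrm{tr})$ locally free of rank $3$; this is the definition of $\mathscr{E}$. All remaining structure is the restricted multiplication $S^2\mathscr{E}^{\vee}\to\mathscr{A}=\mathscr{O}_Y\oplus\mathscr{E}^{\vee}$, a pair $(b,c)$ with $b\colon S^2\mathscr{E}^{\vee}\to\mathscr{O}_Y$ a symmetric form (a rational multiple of the trace pairing) and $c\colon S^2\mathscr{E}^{\vee}\to\mathscr{E}^{\vee}$. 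In a local frame $z_1,z_2,z_3$ of $\mathscr{E}^{\vee}$ this is the table $z_iz_j=b_{ij}+\sum_k c_{ij}^{\,k}z_k$, and associativity of $\mathscr{A}$ becomes an explicit finite system of equations in the $b_{ij}$ and $c_{ij}^{\,k}$; the task is to solve that system and to package the solution as an $\eta$.

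Over the open set $Y^{\circ}\subseteq Y$ where the fibres of $f$ are Gorenstein, Theorem~\ref{theo.casnati} already provides such a solution: after a local trivialization the associativity equations are solved by exhibiting $X$ inside the $\mathbb{P}^2$-bundle $\mathbb{P}(\mathscr{E})$ as the complete intersection of two conics $\omega_1,\omega_2\in S^2\mathscr{E}^{\vee}$ (suitably twisted), with $\mathscr{A}$ recovered by pushing forward the Koszul resolution of that complete intersection along $\mathbb{P}(\mathscr{E})\to Y$. The plane $\langle\omega_1,\omega_2\rangle$ is intrinsic: the pair is unique up to the $GL_2$ that changes basis of the pencil, and one checks that this orbit is faithfully recorded by $\omega_1\wedge\omega_2$, which patches into a global section $\eta$ of $\bigwedge^2S^2\mathscr{E}^{\vee}\otimes\bigwedge^3\mathscr{E}$ over $Y^{\circ}$ — the twist by $\bigwedge^3\mathscr{E}$ being forced by the identification $\bigwedge^2\mathscr{F}\cong\bigwedge^3\mathscr{E}^{\vee}$ of Theorem~\ref{theo.casnati}. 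Being a wedge of two sections of the rank $6$ bundle $S^2\mathscr{E}^{\vee}$, $\eta$ is totally decomposable. Run backwards over $Y^{\circ}$: given $(\mathscr{E},\eta)$ with $\eta$ totally decomposable, write $\eta$ locally as $\omega_1\wedge\omega_2$, form $V(\omega_1)\cap V(\omega_2)\subset\mathbb{P}(\mathscr{E})$, and read off a rank $4$ algebra, which is automatically associative (it carries the multiplication inherited from $\mathscr{O}_{\mathbb{P}(\mathscr{E})}$), the $GL_2$ ambiguity leaving the algebra canonically unchanged so that the local pieces glue.

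The main obstacle, and the genuinely new content over Theorem~\ref{theo.casnati}, is to extend all of this from $Y^{\circ}$ to $Y$. One cannot simply invoke a Hartogs-type extension, since the non-Gorenstein locus of a degree $4$ cover need not have large codimension — it can even be all of $Y$. Instead one has to extract $\eta$ directly from the multiplication table, bypassing the embedding $X\hookrightarrow\mathbb{P}(\mathscr{E})$, which may degenerate at non-Gorenstein points, and the auxiliary bundle $\mathscr{F}$, which need not then exist globally. Concretely, one shows that the associativity equations, solved without the Gorenstein hypothesis, still produce a canonical totally decomposable section $\eta$ of $\bigwedge^2S^2\mathscr{E}^{\vee}\otimes\bigwedge^3\mathscr{E}$ depending only on $\mathscr{A}$, with $\eta=0$ corresponding precisely to the square-zero extensions $\mathscr{O}_Y\oplus\mathscr{E}^{\vee}$ (multiplication $\mathscr{E}^{\vee}\cdot\mathscr{E}^{\vee}=0$); and that, conversely, $\mathscr{A}$ can be reconstructed from $(\mathscr{E},\eta)$ by an explicit procedure depending only on $\eta$, agreeing with the Koszul pushforward on $Y^{\circ}$ and yielding an associative $\mathscr{O}_Y$-algebra everywhere. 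This is the computational heart of the proof; once it is carried out, checking that the two constructions are mutually inverse is formal, and the theorem follows.
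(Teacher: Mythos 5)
This theorem is not proved in the paper at all: it is quoted from Hahn--Miranda \cite[Theorem 1.2]{HM99}, and the only vestiges of its proof that surface here are the explicit local formulas \eqref{eq.cij}--\eqref{eq.qua.cover} expressing the equations of the cover in terms of the building data $c_{ij}$. Measured against the actual argument of \cite{HM99}, your outline follows the correct strategy: reduce to rank-$4$ algebras $\mathscr{A}=f_*\mathscr{O}_X$, split off the trace to obtain $\mathscr{E}^{\vee}$, recover the Gorenstein case from the pencil of conics of Theorem \ref{theo.casnati} via $\eta=\omega_1\wedge\omega_2$, and in general work directly with the multiplication table. All of that is sound.

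The genuine gap sits exactly where you place the ``computational heart'': you never say what $\eta$ \emph{is} in terms of the algebra structure once the Gorenstein hypothesis is dropped, nor how to invert the construction, and that is the entire content of the theorem rather than a detail to be deferred. Concretely, writing the trace-zero multiplication in a local frame as $z_iz_j=b_{ij}+\sum_k a_{ij}^k z_k$, Hahn and Miranda define the fifteen quantities $c_{ij}$ ($1\le i<j\le 6$) as explicit linear combinations of the structure constants $a_{ij}^k$ (the inverse of the table printed after \eqref{eq.qua.cover}); these are the coordinates of a local section of $\bigwedge^2 S^2\mathscr{E}^{\vee}\otimes\bigwedge^3\mathscr{E}$, which one checks transforms correctly under change of frame and hence globalizes. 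Associativity of $\mathscr{A}$ is then shown to be equivalent to (a) the Pl\"ucker relations on the $c_{ij}$, i.e.\ total decomposability of $\eta$, and (b) formulas determining the $b_{ij}$ from the $a_{ij}^k$; conversely every totally decomposable $\eta$ determines all $a_{ij}^k$ and $b_{ij}$ and hence an associative algebra. Without exhibiting these formulas, or some equivalent intrinsic description (fibrewise, $\eta$ is the Pl\"ucker vector of the plane of quadrics vanishing on $f^{-1}(y)\subset\mathbb{P}(\mathscr{E}_y)$, suitably interpreted at degenerate fibres), the argument does not close. A secondary flaw: in your converse direction over $Y^{\circ}$, the scheme $V(\omega_1)\cap V(\omega_2)\subset\mathbb{P}(\mathscr{E})$ need not be finite and flat of degree $4$ for an arbitrary totally decomposable $\eta$ (the two conics may share a component on some fibres), so even on the Gorenstein-looking locus the reconstruction must ultimately go through the algebraic recipe rather than the geometric intersection.
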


In order to make the notation consistent, in Theorem
\ref{theo.casnati} we called $\EE^{\vee}$ the sheaf which is called
$\EE$ in \cite{CE96}. In Theorem \ref{theo.mir}, \emph{totally
decomposable} means that, for all $y \in Y$, the image of
\begin{equation*}
\eta|_y \colon (\bigwedge^3 \EE^{\vee})_y \longrightarrow
(\bigwedge^2 S^2 \mathscr{E}^{\vee})_y
\end{equation*}
is totally decomposable in $(\bigwedge^2 S^2 \mathscr{E}^{\vee})_y$,
i.e. of the form $\xi_1 \wedge \xi_2$ with $\xi_i \in (S^2
\mathscr{E}^{\vee})_y$.

The vector bundle $\EE^{\vee}$ is called the \emph{Tschirnhausen
bundle} of the cover. We have $f_{*}\mathscr{O}_X= \mathscr{O}_Y
\oplus \EE$, so
\begin{equation}\label{eq.inv.hacca}
h^i(X, \, \mathscr{O}_X)=h^i(Y, \, \mathscr{O}_Y)+h^i(Y, \, \EE).
\end{equation}

By \cite[Proposition 5.1]{CE96} there is an exact sequence
\begin{equation}\label{suc.effe}
0 \longrightarrow \FF \stackrel{\varphi}{\longrightarrow} S^2
\EE^{\vee} \longrightarrow f_*\omega^{2}_{X | Y} \longrightarrow 0
\end{equation}
and the associated Eagon-Northcott complex tensored with
$\bigwedge^2 \FF^{\vee}$ yields
\begin{equation}\label{suc.igor.notcot}
0 \longrightarrow S^2\FF \otimes \bigwedge^2 \FF^{\vee}
\longrightarrow S^2 \EE^{\vee} \otimes \FF^{\vee} \longrightarrow
\bigwedge^2 S^2 \EE^{\vee} \otimes \bigwedge^3 \EE.
\end{equation}

The induced map in cohomology
\begin{equation*}
H^0(Y, \, S^2 \EE^{\vee} \otimes \FF^{\vee}) \longrightarrow H^0(Y,
\, \bigwedge^2 S^2 \EE^{\vee} \otimes \bigwedge^3 \EE)
\end{equation*}
provides the bridge between Theorem \ref{theo.casnati} and Theorem
\ref{theo.mir}. In fact a straightforward computation shows that it
sends the element
 $\varphi \in \textrm{Hom}(\FF, \,  S^2 \EE^{\vee}) \cong H^0(Y,
\, S^2 \EE^{\vee} \otimes \FF^{\vee})$ to the totally decomposable
element $2 \varphi \wedge \varphi \in \textrm{Hom}(\bigwedge^2 \FF,
\, \bigwedge^2S^2 \EE^{\vee}) \cong H^0(Y, \, \bigwedge^2 S^2
\EE^{\vee} \otimes \bigwedge^3 \EE)$.

When $X$ and $Y$ are smooth surfaces one has the following useful
formulae.
\begin{proposition}\emph{\cite[Proposition 5.3]{CE96}}\label{prop.invariants}
Let $X$ and $Y$ be smooth, connected, projective surfaces, $f\colon
X \longrightarrow Y$ a cover of degree $4$ and $R \subset X$ the
ramification divisor of $f$. Then:
\begin{itemize}
\item[$\boldsymbol{(i)}$] $\chi (\mathscr{O}_X)=
4\chi(\mathscr{O}_Y) +
\frac{1}{2}c_1({\EE^{\vee}})K_Y+\frac{1}{2}c^2_1(\EE^{\vee})-c_2(\EE^{\vee})$;
\item[$\boldsymbol{(ii)}$] $K^2_X= 4K^2_Y +
4c_1({\EE^{\vee}})K_Y+2c^2_1(\EE^{\vee})-4c_2(\EE^{\vee})+c_2(\FF)$;
\item[$\boldsymbol{(iii)}$] $p_a(R)= 1 +
c_1({\EE^{\vee}})K_Y+2c^2_1(\EE^{\vee})-4c_2(\EE^{\vee})+c_2(\FF)$.
\end{itemize}
\end{proposition}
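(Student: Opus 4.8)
The plan is to obtain all three identities from Hirzebruch--Riemann--Roch on $X$ and on $Y$, together with the Riemann--Hurwitz relation $K_X = f^*K_Y + R$. For $(i)$ I would start from $\chi(\oo_X) = \chi(Y, f_*\oo_X) = \chi(\oo_Y) + \chi(\EE)$, valid because $f$ is finite and $f_*\oo_X = \oo_Y \oplus \EE$; applying Riemann--Roch on the surface $Y$ to the rank-$3$ bundle $\EE$ gives $\chi(\EE) = 3\chi(\oo_Y) + \tfrac12 c_1(\EE)\bigl(c_1(\EE) - K_Y\bigr) - c_2(\EE)$, and rewriting this through $c_1(\EE) = -c_1(\EE^{\vee})$, $c_2(\EE) = c_2(\EE^{\vee})$ yields $(i)$.

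For $(ii)$ and $(iii)$ the first step is to compute the two auxiliary intersection numbers $(f^*K_Y)^2 = 4K_Y^2$ (projection formula) and $f^*K_Y \cdot R = 2\,c_1(\EE^{\vee})K_Y$. For the latter, $f^*K_Y\cdot R = K_Y \cdot f_*R$ by the projection formula, and $f_*R$ is the discriminant divisor of $f$, whose class equals $(\det f_*\oo_X)^{\otimes -2} = (\det \EE)^{\otimes -2}$, i.e.\ $f_*R \equiv 2\,c_1(\EE^{\vee})$ (alternatively one identifies $f_*\omega_{X|Y}$ with the $\oo_Y$-dual $\oo_Y\oplus\EE^{\vee}$ of $f_*\oo_X$ and reads off the same number by comparing $\chi(\oo_X(R))$ computed on $X$ and on $Y$). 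Then $K_X^2 = (f^*K_Y + R)^2 = 4K_Y^2 + 4\,c_1(\EE^{\vee})K_Y + R^2$, while adjunction on $X$ gives $2p_a(R) - 2 = R\cdot(R+K_X) = 2R^2 + f^*K_Y\cdot R = 2R^2 + 2\,c_1(\EE^{\vee})K_Y$; thus both $(ii)$ and $(iii)$ are reduced to evaluating $R^2$.

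To evaluate $R^2$, I would use $\omega_{X|Y} = \oo_X(R)$, so that the sequence \eqref{suc.effe} becomes $0 \to \FF \to S^2\EE^{\vee} \to f_*\oo_X(2R) \to 0$ and hence $\chi(\oo_X(2R)) = \chi(S^2\EE^{\vee}) - \chi(\FF)$. The right-hand side is computed by Riemann--Roch on $Y$, using the splitting-principle identities $c_1(S^2\EE^{\vee}) = 4\,c_1(\EE^{\vee})$ and $c_2(S^2\EE^{\vee}) = 5\,c_1(\EE^{\vee})^2 + 5\,c_2(\EE^{\vee})$, together with $c_1(\FF) = c_1(\EE^{\vee})$ (from $\bigwedge^2\FF \cong \bigwedge^3\EE^{\vee}$), with $c_2(\FF)$ kept as a parameter; the left-hand side is computed by Riemann--Roch on $X$, $\chi(\oo_X(2R)) = \chi(\oo_X) + 2R^2 - R\cdot K_X$, after substituting $\chi(\oo_X)$ from $(i)$ and $R\cdot K_X = R^2 + 2\,c_1(\EE^{\vee})K_Y$. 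Equating the two expressions, the terms in $\chi(\oo_Y)$ and $c_1(\EE^{\vee})K_Y$ cancel and one solves $R^2 = 2\,c_1(\EE^{\vee})^2 - 4\,c_2(\EE^{\vee}) + c_2(\FF)$; inserting this into the identities of the previous paragraph gives $(ii)$ and $(iii)$.

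The steps beyond this are routine Chern-class bookkeeping, so the part needing real care is the identification of the class of the discriminant divisor $f_*R$ --- equivalently the structure of $f_*\omega_{X|Y}$ --- which is also where one must make sure the standing hypotheses are in force: $f$ flat (automatic by miracle flatness, since $Y$ is smooth and $X$ Cohen--Macaulay) and generically \'etale, so that $R$ is the honest ramification divisor, $\omega_{X|Y} = \oo_X(R)$, and the sequence \eqref{suc.effe} applies.
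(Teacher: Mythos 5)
Your argument is correct; note, however, that the paper itself gives no proof of this statement --- it is quoted verbatim from \cite[Proposition 5.3]{CE96} --- so the only comparison available is with Casnati--Ekedahl's original derivation, which pushes forward the length-two resolution of $\oo_X$ as an $\oo_{\mathbb{P}(\EE)}$-module (the degree-$4$ cover being a relative complete intersection of two quadrics) and reads off the invariants via Grothendieck--Riemann--Roch. Your route is a legitimate and somewhat leaner alternative: part $(i)$ via $\chi(\oo_X)=\chi(\oo_Y)+\chi(\EE)$ and Riemann--Roch is exactly right (I checked the signs after dualizing); the reduction of $(ii)$ and $(iii)$ to the single number $R^2$ via Riemann--Hurwitz, adjunction, and $f_*R\equiv 2\,c_1(\EE^{\vee})$ is sound, with the discriminant class correctly identified as $(\det f_*\oo_X)^{\otimes -2}$ (equivalently via relative duality $f_*\omega_{X|Y}\cong(f_*\oo_X)^{\vee}=\oo_Y\oplus\EE^{\vee}$); and the evaluation of $R^2$ from the sequence \eqref{suc.effe} checks out numerically --- your splitting-principle identities $c_1(S^2\EE^{\vee})=4c_1(\EE^{\vee})$ and $c_2(S^2\EE^{\vee})=5c_1^2(\EE^{\vee})+5c_2(\EE^{\vee})$ are correct, and equating the two computations of $\chi(\oo_X(2R))$ does yield $R^2=2c_1^2(\EE^{\vee})-4c_2(\EE^{\vee})+c_2(\FF)$, from which $(ii)$ and $(iii)$ follow. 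You also correctly flagged the standing hypotheses (flatness by miracle flatness, the Gorenstein condition from smoothness of $X$, generic \'etaleness in characteristic $0$) needed for $\omega_{X|Y}=\oo_X(R)$ and for \eqref{suc.effe} to apply; the one thing your write-up leans on without proof is precisely that pair of quoted structural facts from \cite{CE96} ($f_*\oo_X=\oo_Y\oplus\EE$ and the sequence \eqref{suc.effe}), which is consistent with how the paper itself uses them.
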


\subsection{Fourier-Mukai transforms of (W)IT-sheaves}\label{sub.fourier}
Let $A$ be an abelian variety of dimension $g$ and $\wA:=
\textrm{Pic}^0(A)$ its dual abelian variety. We say that a coherent
sheaf $\mathscr{F}$ on $A$ is
\begin{itemize}
\item a \emph{IT-sheaf of index i} (or, equivalently, that ${\FF}$ \emph{satisfies IT of index i})
if
\begin{equation*}
H^j(A, \, \mathscr{F} \otimes \mathscr{Q})=0 \quad \textrm{for all }
\mathscr{Q} \in \textrm{Pic}^0(A) \quad \textrm{and } j\neq i;
\end{equation*}
\item a \emph{WIT-sheaf of index i} (or, equivalently, that ${\FF}$ \emph{satisfies WIT of index i})
if \begin{equation*} R^j \pi_{\wA \, *}(\mathscr{P} \otimes
\pi^*_{A}\mathscr{F})=0 \quad \textrm{for all} \quad j \neq i,
\end{equation*}
where $\mathscr{P}$ is the normalized Poincar\'e bundle on $A \times
\wA$.
\end{itemize}

If $\FF$ is a WIT-sheaf of index $i$, the coherent sheaf
\begin{equation*}
\Phi^{\mathscr{P}}(\mathscr{F}):={R}^i\pi_{\wA \, *}(\mathscr{P}
\otimes \pi^*_{A}\mathscr{F})
\end{equation*}
is called the \emph{Fourier-Mukai transform} of $\mathscr{F}$.

For simplicity of notation, given any WIT-sheaf $\mathscr{G}$ of
index $i$ on $\widehat{A}$ we use the same symbol
$\Phi^{\mathscr{P}}$ for its Fourier-Mukai transform
\begin{equation*}
\Phi^{\mathscr{P}}(\mathscr{G}):={R}^i\pi_{A \,
*}(\mathscr{P} \otimes \pi^*_{\wA}\mathscr{G}).
\end{equation*}

By the Base Change Theorem (see \cite[Chapter II]{Mum70}) it follows
that $\FF$ satisfies IT of index $i$ if and only if it satisfies WIT of
index $i$ and $\Phi^{\mathscr{P}}(\mathscr{F})$ is locally free. In
particular, any non-degenerate line bundle $\LL$ of index $i$ on $A$
is an IT-sheaf of index $i$ and its Fourier-Mukai transform
$\Phi^{\mathscr{P}}(\mathscr{L})$ is a vector bundle of rank $h^i(A,
\, \LL)$ on $\widehat{A}$.

\begin{proposition}\emph{\cite{Mu81} \cite[Theorem
14.2.2]{BL04}} \label{prop.mukai} Let $\FF$ be a WIT-sheaf of
index $i$ on $A$. Then $\Phi^{\mathscr{P}}(\mathscr{F})$ is a
WIT-sheaf of index $g-i$ on $\widehat{A}$ and
\begin{equation*}
\Phi^{\mathscr{P}} \circ \Phi^{\mathscr{P}} (\FF) = (-1)_{A}^*\FF.
\end{equation*}
\end{proposition}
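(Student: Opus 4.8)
The plan is to lift the statement to the bounded derived categories and deduce it from Mukai's inversion theorem at that level. Write $R\Phi^{\mathscr{P}}\colon \dboun{A}\to\dboun{\wA}$ for the exact functor $\mathscr{G}\mapsto R\pi_{\wA\,*}(\mathscr{P}\otimes\pi_A^*\mathscr{G})$ and, using the identification $A\times\wA=\wA\times A$, write $R\Phi^{\mathscr{P}}\colon\dboun{\wA}\to\dboun{A}$ for the analogous functor in the opposite direction (the kernel is the same line bundle $\mathscr{P}$, so there is no issue with derived tensor products, and properness of the projections keeps us inside $\dboun{\cdot}$). The hypothesis that $\mathscr{F}$ is a WIT-sheaf of index $i$ is, by Base Change, exactly the statement that the complex $R\Phi^{\mathscr{P}}(\mathscr{F})$ has cohomology concentrated in degree $i$, where it equals $\Phi^{\mathscr{P}}(\mathscr{F})$; equivalently $R\Phi^{\mathscr{P}}(\mathscr{F})\cong\Phi^{\mathscr{P}}(\mathscr{F})[-i]$ in $\dboun{\wA}$.

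The core input is the derived inversion formula $R\Phi^{\mathscr{P}}\circ R\Phi^{\mathscr{P}}\cong(-1)_A^*(-)[-g]$ on $\dboun{A}$. To establish it I would work on the triple product $A\times\wA\times A$ with partial projections $\pi_{12},\pi_{23},\pi_{13}$, use flat base change and the projection formula to rewrite the composite as the integral transform with kernel $\mathscr{K}:=R\pi_{13\,*}(\pi_{12}^*\mathscr{P}\otimes\pi_{23}^*\mathscr{P})\in\dboun{A\times A}$, and then identify $\mathscr{K}$. The identification reduces, via restriction to the slices $A\times\{\xi\}\times A$ together with cohomology and base change, to the single computation $R\pi_{\wA\,*}\mathscr{P}\cong k(0_{\wA})[-g]$, i.e. that $\oo_A$ satisfies IT of index $g$ with Fourier-Mukai transform the skyscraper at the origin; this in turn follows from the fact that $\mathscr{P}|_{A\times\{\xi\}}$ is a nontrivial element of $\textrm{Pic}^0(A)$ for $\xi\neq 0$, hence acyclic, plus a seesaw argument pinning down the sheaf at $0$. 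Carrying this out yields $\mathscr{K}\cong(1_A\times(-1)_A)_*\oo_\Delta[-g]$, which is the kernel of $(-1)_A^*(-)[-g]$, giving the formula. This kernel computation is the main obstacle; the rest is formal derived-functor manipulation.

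Granting the inversion formula, the conclusion is immediate. Apply $R\Phi^{\mathscr{P}}$ to the isomorphism $R\Phi^{\mathscr{P}}(\mathscr{F})\cong\Phi^{\mathscr{P}}(\mathscr{F})[-i]$: the left side becomes $R\Phi^{\mathscr{P}}(R\Phi^{\mathscr{P}}(\mathscr{F}))\cong(-1)_A^*\mathscr{F}[-g]$, while the right side becomes $R\Phi^{\mathscr{P}}(\Phi^{\mathscr{P}}(\mathscr{F}))[-i]$, so that $R\Phi^{\mathscr{P}}(\Phi^{\mathscr{P}}(\mathscr{F}))\cong(-1)_A^*\mathscr{F}[i-g]$. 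Since $(-1)_A^*\mathscr{F}$ is an honest coherent sheaf placed in degree $0$, the complex on the left has cohomology concentrated in degree $g-i$; by definition this says precisely that $\Phi^{\mathscr{P}}(\mathscr{F})$ is a WIT-sheaf of index $g-i$ on $\wA$, and reading off its unique nonzero cohomology sheaf gives $\Phi^{\mathscr{P}}(\Phi^{\mathscr{P}}(\mathscr{F}))=(-1)_A^*\mathscr{F}$, as claimed.
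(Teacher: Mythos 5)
The paper offers no proof of this proposition---it is quoted verbatim from Mukai and from Birkenhake--Lange---and your outline is precisely the argument of those sources: pass to the derived inversion formula via convolution of kernels on $A\times\wA\times A$, identify the composite kernel using acyclicity of the nontrivial elements of $\mathrm{Pic}^0$ together with Mumford's computation $R\pi_{\wA\,*}\mathscr{P}\cong k(\hat{0})[-g]$, and then read off the index shift by comparing degrees of shift, which you do correctly and completely in the last paragraph. The only substantive content left as a black box is that kernel lemma (Mumford's computation, which is the real theorem here and is not a mere ``seesaw argument''), and one small slip: the relevant restrictions are to the fibers $\{x\}\times\wA\times\{y\}$ of $\pi_{13}$, on which the product of the two Poincar\'e bundles is $\mathscr{P}_{x+y}$, not to the slices $A\times\{\xi\}\times A$.
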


\begin{remark} \label{rem.FM}
In general the Fourier-Mukai transform induces an equivalence of
categories between the two derived categories $D(A)$ and
$D(\widehat{A})$, such that for all $\mathscr{F} \in D(A)$ and
$\mathscr{G} \in D(\wA)$ one has
\begin{equation*}
\Phi^{\mathscr{P}} \circ \Phi^{\mathscr{P}}(\mathscr{F}) =
(-1)_{A}^*(\mathscr{F}) [-g] \quad \textrm{and} \quad
\Phi^{\mathscr{P}} \circ \Phi^{\mathscr{P}}(\mathscr{G}) =
(-1)_{\widehat{A}}^*(\mathscr{G}) [-g],
\end{equation*}
where  $[-g]$ means ``shift the complex $g$ places to the right".
When $\FF$ is a $WIT$-sheaf then the complex
$\Phi^{\mathscr{P}}(\mathscr{F})$ can be identified with a vector
bundle, since it is different from zero at most at one place.
\end{remark}

\begin{corollary}\label{cor.menouno}
Let $\FF$ be a WIT-sheaf on $A$. Then
\begin{equation*}
\Phi^{\mathscr{P}} ((-1)_A^* \FF) =
(-1)_{\widehat{A}}^*\Phi^{\mathscr{P}} (\FF).
\end{equation*}
\end{corollary}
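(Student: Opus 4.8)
The plan is to deduce the statement from the inversion formula of Proposition \ref{prop.mukai} by applying $\Phi^{\mathscr{P}}$ three times. Let $i$ be the index of $\FF$ and set $\mathscr{G}:=\Phi^{\mathscr{P}}(\FF)$; by Proposition \ref{prop.mukai} this is a WIT-sheaf of index $g-i$ on $\widehat{A}$ and $\Phi^{\mathscr{P}}(\mathscr{G})=(-1)_A^*\FF$. In particular $(-1)_A^*\FF$ is again a WIT-sheaf — this is also clear directly, since pulling back by the automorphism $(-1)_A$ preserves the vanishing of the relevant relative cohomology sheaves, hence the WIT property and its index — so $\Phi^{\mathscr{P}}((-1)_A^*\FF)$ is well defined as a coherent sheaf. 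Applying Proposition \ref{prop.mukai} once more, now to the WIT-sheaf $\mathscr{G}$ on $\widehat{A}$ (using the canonical identification $\widehat{\widehat{A}}=A$), gives $\Phi^{\mathscr{P}}\circ\Phi^{\mathscr{P}}(\mathscr{G})=(-1)_{\widehat{A}}^*\mathscr{G}$. Combining the two identities,
\begin{equation*}
\Phi^{\mathscr{P}}\big((-1)_A^*\FF\big)=\Phi^{\mathscr{P}}\big(\Phi^{\mathscr{P}}(\mathscr{G})\big)=(-1)_{\widehat{A}}^*\mathscr{G}=(-1)_{\widehat{A}}^*\Phi^{\mathscr{P}}(\FF),
\end{equation*}
which is the assertion.

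Alternatively, one can argue directly at the level of the defining relative cohomology. Write $k:=(-1)_A\times(-1)_{\widehat{A}}\colon A\times\widehat{A}\to A\times\widehat{A}$. The symmetry of the normalized Poincar\'e bundle gives $k^*\mathscr{P}\cong\mathscr{P}$, while $\pi_A\circ k=(-1)_A\circ\pi_A$ and $\pi_{\widehat{A}}\circ k=(-1)_{\widehat{A}}\circ\pi_{\widehat{A}}$. Hence $\mathscr{P}\otimes\pi_A^*(-1)_A^*\FF\cong k^*(\mathscr{P}\otimes\pi_A^*\FF)$, and since $k$ is an isomorphism intertwining $\pi_{\widehat{A}}$ with $(-1)_{\widehat{A}}\circ\pi_{\widehat{A}}$ one obtains
\begin{equation*}
R^j\pi_{\widehat{A}\,*}\big(k^*(\mathscr{P}\otimes\pi_A^*\FF)\big)\cong(-1)_{\widehat{A}}^*\,R^j\pi_{\widehat{A}\,*}(\mathscr{P}\otimes\pi_A^*\FF)
\end{equation*}
for every $j$; this simultaneously re-proves that $(-1)_A^*\FF$ is WIT of the same index as $\FF$ and identifies the two transforms.

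The argument is essentially formal, so there is no real obstacle. The only points deserving a word of justification are the bookkeeping of the WIT indices along the iterated transform, so that each intermediate object is genuinely a sheaf and Proposition \ref{prop.mukai} applies on both $A$ and $\widehat{A}$, and, in the second approach, the symmetry $k^*\mathscr{P}\cong\mathscr{P}$ of the Poincar\'e bundle together with the compatibility of $(-1)$ with the two projections.
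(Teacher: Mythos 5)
Your first argument is correct and is essentially the paper's own proof: a purely formal double application of the inversion formula of Proposition \ref{prop.mukai}, differing only in whether one names $\Phi^{\mathscr{P}}(\FF)$ or $\Phi^{\mathscr{P}}((-1)_A^*\FF)$ as the auxiliary sheaf. Your second, direct argument via $k^*\mathscr{P}\cong\mathscr{P}$ is a valid alternative that the paper does not give, and it has the small merit of re-proving along the way that $(-1)_A^*\FF$ is WIT of the same index.
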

\begin{proof}
Set $\mathscr{F}':=\Phi^{\mathscr{P}} ((-1)_A^* \FF)$; then by
Proposition \ref{prop.mukai}
\begin{equation*}
\FF=\Phi^{\mathscr{P}} \circ \Phi^{\mathscr{P}}((-1)_A^*
\FF)=\Phi^{\mathscr{P}}(\mathscr{F}'),
\end{equation*}
hence
\begin{equation*}
\Phi^{\mathscr{P}}(\FF)=\Phi^{\mathscr{P}} \circ
\Phi^{\mathscr{P}}(\mathscr{F}')=(-1)_{\widehat{A}}^*
\mathscr{F'}=(-1)_{\widehat{A}}^* \Phi^{\mathscr{P}} ((-1)_A^* \FF).
\end{equation*}
\end{proof}

\subsection{The Hesse pencil and the family of its dual curves}\label{sub.hal}

In the sequel we will use some  classical facts about the Hesse
pencil of cubic curves, that are summarized here for the reader's
convenience. We follow the treatment given in \cite{AD09}.

The \emph{Hesse pencil} is the one-dimensional linear system of
plane cubic curves defined  by
\begin{equation*}
E_{t_0, \, t_1}\colon t_0(x^3+y^3+z^3)+t_1xyz=0, \quad [t_0 : \,
t_1] \in \mathbb{P}^1.
\end{equation*}
Its nine base points are the inflection points of any smooth curve
in the pencil. There are four singular members in the Hesse pencil
and each one is the union of three lines:
\begin{equation*}
\begin{array}{ll}
E_{0,\, 1} \colon           & xyz=0, \\
E_{1, \, -3} \colon          & (x+y+z)(x+\omega y+ \omega^2 z)(x+ \omega^2 y + \omega z)=0, \\
E_{1,\, -3\omega}\colon     & (x+\omega y + z)(x+ \omega^2 y + \omega^2 z)(x+y+ \omega z)=0, \\
E_{1, \, -3\omega^2} \colon & (x+\omega^2 y+ z)(x+ \omega y + \omega
z)(x+y+\omega^2 z)=0.
\end{array}
\end{equation*}
We call the singular members the \emph{triangles}.

The dual curve $\mathbf{B}_{m_0, \, 3m_1}$ of a smooth member
$E_{m_0,3m_1}$ of the Hesse pencil is a plane curve of degree six
with nine cusps, whose equation in the dual plane
$(\mathbb{P}^{2})^{\vee}$ is
\begin{equation}\label{eq.dual.Hesse}
\begin{split}
& m^4_0(X^6+Y^6+Z^6)-m_0(2m^3_0+32m^3_1)(X^3Y^3+X^3Z^3+Y^3Z^3) \\
-24 & m^2_0m^2_1XYZ(X^3+Y^3+Z^3)-(24m^3_0m_1+48m^4_1)X^2Y^2Z^2=0.
\end{split}
\end{equation}
Note that the dual of a triangle becomes a triangle taken with
multiplicity two. For any pair $(m_0, \, m_1)$ there is a unique
cubic $\mathbf{C}_{m_0,\, 3m_1}$ passing through the nine cusps of
$\mathbf{B}_{m_0,\, 3m_1}$. The general element  of the pencil
generated by $\mathbf{B}_{m_0, \, 3m_1}$ and $2 \mathbf{C}_{m_0,\,
3m_1}$  is an irreducible curve of degree $6$ with nine nodes at
the nine cusps of $\mathbf{B}_{m_0,\, 3m_1}$. Such a pencil is
called the \emph{Halphen pencil} associated with
$\mathbf{B}_{m_0,\, 3m_1}$, see \cite{H1882}.

\section{The construction} \label{sec.constr}
The aim of this section and the next one is to prove the main result
of the paper, namely the following

\begin{theorem}\label{theo.main}
There exists a $4$-dimensional family $\mathcal{M}_{\Phi}$ of
minimal surfaces of general type with $p_g=q=2$ and $K^2=6$ such
that, for the general element $\widehat{S} \in \mathcal{M}_{\Phi}$,
the canonical class $K_{\widehat{S}}$ is ample and the Albanese map
$\hat{\alpha}\colon \widehat{S} \longrightarrow \widehat{A}$ is a
finite cover of degree $4$.

The Tschirnhausen bundle $\EE^{\vee}$ associated with $\hat{\alpha}$
is isomorphic to $\Phi^{\mathscr{P}}(\mathscr{L}^{-1})$, where
$\mathscr{L}$ is a polarization of type $(1,\,3)$ on $A$.

The family $\mathcal{M}_{\Phi}$ provides an irreducible component of
the moduli space $\mathcal{M}_{2, \, 2, \,6}^{\emph{can}}$ of
canonical models of minimal surfaces of general type with $p_g=q=2$
and $K^2=6$. Such a component is generically smooth.
\end{theorem}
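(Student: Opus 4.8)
The plan is to construct the family $\mathcal{M}_{\Phi}$ explicitly as a family of quadruple covers and then analyze its moduli, proceeding in four stages. First, I would fix a polarization $\mathscr{L}$ of type $(1,\,3)$ on an abelian surface $A$; since such an $\mathscr{L}$ has $h^0(A,\,\mathscr{L})=3$ and index $0$, its Fourier-Mukai transform $\EE^{\vee}:=\Phi^{\mathscr{P}}(\mathscr{L}^{-1})$ is a rank-$3$ vector bundle on $\wA$ (note $\mathscr{L}^{-1}$ has index $2$, so after accounting for the index shift in Proposition \ref{prop.mukai} one lands in index $0$). I would then use the Casnati–Ekedahl description (Theorem \ref{theo.casnati}): choosing $\FF$ of rank $2$ with $\bigwedge^2\FF\cong\bigwedge^3\EE^{\vee}$ and a general section $\eta\in H^0(\wA,\,S^2\EE^{\vee}\otimes\FF^{\vee})$ produces a quadruple Gorenstein cover $\hat{\alpha}\colon\widehat{S}\to\wA$. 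The key point, following \cite{CH06}, is that $\eta$ cannot be chosen freely: it must be $\mathscr{H}_3$-equivariant so that the cover descends correctly, and the branch curve must be forced to lie in $|\widehat{\mathscr{L}}^{\otimes 2}|$ with exactly six cusps. I would carry out this construction first at the level of the isogeny $\phi_{\mathscr{L}}\colon A\to\wA$ (building $\alpha\colon S\to A$ with Tschirnhausen bundle $\phi_{\mathscr{L}}^*\EE^{\vee}$, where the Heisenberg action is more transparent on $H^0(A,\,\mathscr{L})$) and then descend, exactly as outlined in the introduction.

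Second, I would verify the numerical invariants and smoothness. Using \eqref{eq.inv.hacca} together with $h^i(\wA,\,\EE^{\vee})=h^i(A,\,\mathscr{L}^{-1})$ adjusted by the FM index shift, one computes $p_g(\widehat{S})=q(\widehat{S})=2$; the identity $\chi(\mathscr{O}_{\widehat{S}})=1$ then follows, and $K^2=6$ comes from Proposition \ref{prop.invariants}(ii) once $c_1(\EE^{\vee})$, $c_2(\EE^{\vee})$ and $c_2(\FF)$ are computed from the Fourier-Mukai formalism (Chern classes of FM transforms of line bundles are standard). Smoothness of the general $\widehat{S}$, ampleness of $K_{\widehat{S}}$, and minimality require that the branch curve $\widehat{B}$ have at worst the six prescribed cusps; this is where Appendix 1 enters — the transversality of the three distinguished Heisenberg divisors guarantees that a general member of the relevant linear system is singular only at the six cusps. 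This is Propositions \ref{prop.sing.covers} and \ref{prop.num.invariants.cover}.

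Third, I would compute the dimension of $\mathcal{M}_{\Phi}$ by parameter count: the abelian surface $A$ with a $(1,\,3)$-polarization varies in a $3$-dimensional family, and the choice of the equivariant section $\eta$ modulo the relevant automorphisms (the Heisenberg group and the finite group acting on the Hesse pencil) contributes one more parameter, giving $\dim\mathcal{M}_{\Phi}=4$. To show $\mathcal{M}_{\Phi}$ is an irreducible component of $\mathcal{M}_{2,\,2,\,6}^{\mathrm{can}}$ and that it is generically smooth, I would compute $h^1(\widehat{S},\,T_{\widehat{S}})$ for the general member and show it equals $4$; by semicontinuity and the fact that the moduli space has dimension $\geq h^1(T_{\widehat{S}})-h^2(T_{\widehat{S}})$, equality of the parameter count with $h^1(T_{\widehat{S}})$ forces $\mathcal{M}_{\Phi}$ to be an open subset of a unique component and that component to be smooth at $\widehat{S}$. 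The deformation computation would go through the cover structure: deformations of $\widehat{S}$ are controlled by deformations of the triple $(\wA,\,\EE^{\vee},\,\eta)$, and one must check that all such deformations preserve the Albanese-cover structure — i.e. that the degree-$4$ Albanese map does not deform away. This is Proposition \ref{prop.irred.comp}.

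The main obstacle, I expect, is the deformation-theoretic step: showing that every small deformation of the general $\widehat{S}$ remains in $\mathcal{M}_{\Phi}$, equivalently that $h^1(\widehat{S},\,T_{\widehat{S}})$ does not exceed the parameter count $4$. A priori, $\widehat{S}$ could admit deformations that destroy the fibration/cover structure or change the type of the polarization; ruling these out requires a careful analysis of the natural map from deformations of the cover data to $H^1(T_{\widehat{S}})$, using the exact sequences \eqref{suc.effe} and \eqref{suc.igor.notcot} relating $\EE$, $\FF$ and $\omega_{\widehat{S}|\wA}^{2}$, and invoking that the Albanese variety and its polarization type are deformation invariants so the base $\wA$ can only vary in its $3$-dimensional family. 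Establishing that this map is surjective (hence an isomorphism after the dimension count) — possibly with the aid of the Heisenberg-equivariance constraints pinning down the section $\eta$ — is the technical heart of the argument, and it also yields the generic smoothness claim as an immediate by-product.
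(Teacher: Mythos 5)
Your outline reproduces the skeleton of the paper's argument (Fourier--Mukai transform of $\mathscr{L}^{-1}$, construction upstairs on $A$ via the Schr\"odinger representation followed by descent through $\phi_{\mathscr{L}^{-1}}$, the parameter count $3+1$, and the reduction of everything to $h^1(\widehat{S},\,T_{\widehat{S}})=4$), but two essential steps are missing. First, the construction and the dimension count. The paper does not use the Casnati--Ekedahl datum $(\FF,\eta)$ but the Hahn--Miranda one: one must exhibit \emph{totally decomposable} $K(\mathscr{L}^{-1})$-invariant sections of $\bigwedge^2 S^2(\phi^*\EE^{\vee})\otimes\bigwedge^3(\phi^*\EE)\cong\mathscr{L}^{\oplus 15}$. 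The invariant subspace is computed (via the explicit matrices of the Schr\"odinger representation) to be $5$-dimensional, and the Pl\"ucker relations expressing total decomposability cut out a smooth conic in the corresponding $\mathbb{P}^4$; this conic, a $\mathbb{P}^1$ with coordinate $[a:c]$, is the source of the single extra parameter. Your explanation --- that the extra parameter arises from the choice of equivariant section \emph{modulo automorphisms} --- is not the actual mechanism and on its face would give the wrong count (the projectivized invariant space is $4$-dimensional). Without the decomposability analysis you have established neither that any such cover exists nor that the family is $4$-dimensional rather than $7$-dimensional.

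Second, the bound $h^1(\widehat{S},\,T_{\widehat{S}})\leq 4$, which you correctly flag as the technical heart but leave unproved. The paper does not argue by deforming the triple $(\widehat{A},\,\EE^{\vee},\,\eta)$ and proving surjectivity of a comparison map; it uses the normal-sheaf sequence \eqref{seq.deform} for the finite map $\hat{\alpha}$, identifies $h^0(\widehat{S},\,\mathcal{N}_{\hat{\alpha}})$ via the diagrams \eqref{dia.1} and \eqref{dia.2} with $h^0$ of a line bundle $\mathcal{N}'_{\varphi}$ of degree $6$ on the genus-$7$ normalization $\wR$ of the $6$-cuspidal branch curve, and bounds this by $3$ using Clifford's theorem --- which requires knowing that $\wR$ is non-hyperelliptic, proved via Pirola's rigidity theorem for hyperelliptic curves on a general abelian surface (Proposition \ref{prop.R.nonhyp}). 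Combined with $h^0(\widehat{S},\,\hat{\alpha}^*T_{\widehat{A}})=2$ and the fact, borrowed from \cite{PP11}, that the image of $\varepsilon\colon H^1(\widehat{S},\,T_{\widehat{S}})\to H^1(\widehat{S},\,\hat{\alpha}^*T_{\widehat{A}})$ is $3$-dimensional (deformations keep the Albanese torus algebraic), the sequence \eqref{seq.deform.coh} gives $h^1(\widehat{S},\,T_{\widehat{S}})\leq(3-2)+3=4$. None of these ingredients appear in your proposal, so the claim that $\mathcal{M}_{\Phi}$ is a generically smooth irreducible component is not yet established.
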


We first outline the main idea of our construction, which is
inspired by the one used in \cite{CH06} in the simpler case where
the Albanese map has degree $3$. In order to build a quadruple
cover $\hat{\alpha} \colon \widehat{S} \lr \widehat{A}$ with
Tschirnhausen bundle $\EE^{\vee}$, we first build a quadruple
cover $\alpha \colon S \lr A$ with Tschirnhausen bundle
$\phi_{\LL^{-1}}^*\EE^{\vee}$ and then we identify those covers of
this type that descend to a quadruple cover $\hat{\alpha} \colon
\widehat{S} \lr \widehat{A}$ (Propositions \ref{prop.dis.sec} and
\ref{prop.tot.decomp}). Furthermore we prove that for a general
such a cover the surface $\widehat{S}$ is smooth
 and we compute its invariants (Propositions \ref{prop.sing.covers} and
\ref{prop.num.invariants.cover}). Finally, in Section
\ref{sec.moduli} we examine the subset $\mathcal{M}_{\Phi}$ of the
moduli space corresponding to our surfaces.
\smallskip

We start by considering a $(1, \,3)$-polarized abelian surface $(A,
\, \LL)$.  For all $\mathscr{Q} \in \textrm{Pic}^0(A)$ we have
\begin{equation*}
h^0(A, \, \LL \otimes \mathscr{Q} )=3, \quad h^1(A, \, \LL \otimes
\mathscr{Q})=0, \quad h^2(A, \, \LL \otimes \mathscr{Q})=0,
\end{equation*}
so by Serre duality the line bundle $\LL^{-1}$ satisfies IT of index
$2$. Its Fourier-Mukai transform $\mathscr{E}^{\vee}:=
\Phi^{\mathscr{P}}(\LL^{-1})$ is a rank $3$ vector bundle on $\wA$
which satisfies WIT of index $0$, see Proposition \ref{prop.mukai}.
The isogeny
\begin{equation*}
\phi:=\phi_{\LL^{-1}}\colon A \longrightarrow \wA, \quad
\phi(x)=t_x^*\LL^{-1} \otimes \LL
\end{equation*}
has kernel $K(\LL^{-1})=K(\LL) \cong (\ZZ/3 \ZZ)^2$ and  by
\cite[Proposition 3.11]{Mu81} we have
\begin{equation}\label{eq.mukai}
\phi^* \EE^{\vee} = \LL \oplus \LL \oplus \LL .
\end{equation}
Since $\Phi^{\mathscr{P}}(\EE^{\vee})= \Phi^{\mathscr{P}} \circ
\Phi^{\mathscr{P}}(\LL^{-1})=(-1)_A^* \LL^{-1}$ is locally free, it
follows that $\EE^{\vee}$ is actually a IT-sheaf of index $0$. By
\eqref{eq.mukai} we have
\begin{equation} \label{eq-chern.phi.E}
c_1(\phi^*\EE^{\vee})=3L, \quad c_2(\phi^*\EE^{\vee})=18,
\end{equation}
hence
\begin{equation} \label{eq-chern.E}
c_1(\EE^{\vee})=\widehat{L}, \quad c_2(\EE^{\vee})=2,
\end{equation}
where $\widehat{L}$ is a polarization of type $(1, \, 3)$ on
$\widehat{A}$. Therefore Hirzebruch-Riemann-Roch yields
$\chi(\widehat{A}, \, \EE^{\vee})=1$, which in turn implies
\begin{equation} \label{eq.cohom.EE^vee}
h^0(\widehat{A}, \, \EE^{\vee})=1, \quad h^1(\widehat{A}, \,
\EE^{\vee})=0, \quad h^2(\widehat{A}, \, \EE^{\vee})=0.
\end{equation}
Now we want to construct a quadruple cover $\widehat{\alpha} \colon
\widehat{S} \longrightarrow \widehat{A}$ with Tschirnhausen bundle
$\EE^{\vee}$. By \cite{HM99} it suffices to find those totally
decomposable elements in
\begin{equation*} H^0 \big(A, \, \bigwedge^2 S^2
(\phi^*\EE^{\vee}) \otimes \bigwedge^3 (\phi^* \EE) \big) \cong
H^0(A, \, \LL)^{\oplus 15}
\end{equation*}
that are $K(\mathscr{L}^{-1})$-invariant and therefore descend to
elements in $H^0 \big(\widehat{A}, \, \bigwedge^2 S^2 \EE^{\vee}
\otimes \bigwedge^3 \EE \big)$. This will be done in Propositions
\ref{prop.dis.sec} and \ref{prop.tot.decomp} below.

Let us consider the Heisenberg group
\begin{equation*}
\mathscr{H}_3:=\{(k, \, t, \, l) \mid k \in \CC^* , \, t \in
\ZZ/3\ZZ , \, l \in \widehat{\ZZ/3\ZZ} \},
\end{equation*}
whose group law is
\begin{equation*}
(k, \, t, \, l)\cdot (k', \, t', \, l')=(kk'l'(t), \, t+t', \,
l+l').
\end{equation*}

By \cite[Chapter 6]{BL04} there exists a canonical representation,
known as the Schr\"odinger representation, of $\mathscr{H}_3$ on
$H^0(A, \, \LL)$, where the latter space is identified with the
vector space $V:=\CC(\ZZ/3\ZZ)$ of all complex-valued functions on
the finite group $\ZZ/3\ZZ$. Such an action is given by
\begin{equation*}
(k, \, t, \, l)f(x)=kl(x)f(t+x).
\end{equation*}
Let $\{X, \, Y, \, Z\}$ be the basis  of $H^0(A, \, \LL)$
corresponding to the characteristic functions of $0, \, 1, \, 2$ in
$V$.

\begin{proposition} \label{prop.X.Y.Z-art}
For a general choice of the pair $(A, \, \mathscr{L})$, the three
effective divisors defined by $X$, $Y$, $Z \in H^0(A, \,
\mathscr{L})$ are smooth and intersect transversally. \
\end{proposition}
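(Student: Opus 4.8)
The plan is to study the pencil structure coming from the Heisenberg action and reduce transversality to a genericity statement over the moduli of $(1,3)$-polarized abelian surfaces. First I would recall that $H^0(A,\LL)$ is three-dimensional with the distinguished basis $\{X,Y,Z\}$, so each of the three sections defines a divisor in the linear system $|\LL|$; since $\LL$ is a $(1,3)$-polarization, each divisor $D_X,D_Y,D_Z \in |L|$ is a curve of arithmetic genus $g=1+\tfrac12 L^2 = 4$ on the abelian surface $A$. The key structural input is that the subgroup $\ZZ/3\ZZ$ (the ``translation part'' of $\mathscr{H}_3$) acts on $A$ by translations in $K(\LL)$ and permutes the three sections cyclically: the element $(1,1,1)$ sends $X\mapsto Y \mapsto Z \mapsto X$ up to scalars. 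Hence $D_X$, $D_Y$, $D_Z$ are translates of one another by a fixed $3$-torsion point $\sigma \in A$, and it suffices to prove the assertion for one of them together with its two translates $D_X$, $D_X+\sigma$, $D_X+2\sigma$.

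Next I would set up the relevant incidence variety. Let $\mathcal{A}$ denote the (three-dimensional) moduli space, or rather a suitable parameter space, of pairs $(A,\LL)$ with $\LL$ a $(1,3)$-polarization together with a level structure (so that the Heisenberg group and the basis $\{X,Y,Z\}$ are well defined). Over $\mathcal{A}$ we have the universal abelian surface and the universal divisors $D_X, D_Y, D_Z$. The locus of $(A,\LL)$ for which $D_X$ is singular, or for which two of the three divisors fail to meet transversally, is a closed subvariety $\mathcal{S} \subset \mathcal{A}$; the claim is exactly that $\mathcal{S} \neq \mathcal{A}$, i.e.\ that $\mathcal{S}$ is a proper closed subset. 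Since $\mathcal{A}$ is irreducible, it is enough to exhibit a single pair $(A,\LL)$ for which all three divisors are smooth and pairwise transverse. The most efficient route is to produce an explicit example: take a suitable product $A = E_1 \times E_2$ of elliptic curves, or better a Jacobian, carrying a $(1,3)$-polarization with the required level structure, write down $X,Y,Z$ via theta functions with characteristics, and check smoothness and transversality of the three curves directly (for instance numerically, or by a Bertini-type argument on each curve).

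Alternatively — and this is the cleaner argument I would actually present — I would use the fact that the base locus of the sub-linear system spanned by $X,Y,Z$ inside $|L|$ is empty for general $(A,\LL)$: a $(1,3)$-polarization is base-point free, so the morphism $\varphi_L \colon A \to \mathbb{P}^2$ defined by $|L|$ is a finite morphism of degree $L^2 = 6$ onto $\mathbb{P}^2$. Then $D_X$, $D_Y$, $D_Z$ are the preimages $\varphi_L^{-1}(\ell_X)$, $\varphi_L^{-1}(\ell_Y)$, $\varphi_L^{-1}(\ell_Z)$ of three fixed lines in $\mathbb{P}^2$, and smoothness/transversality of the $D$'s follows from transversality of the lines to the branch locus of $\varphi_L$ and from the lines meeting away from the branch locus and from the singular points of the discriminant, properties that hold for the general member and are preserved under the (finitely many) Heisenberg translates. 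The main obstacle is precisely this last point: the three lines $\ell_X,\ell_Y,\ell_Z$ are \emph{not} arbitrary — they are the images of the three fixed Heisenberg divisors and hence are constrained, so one cannot merely invoke a generic-hyperplane Bertini statement; one must verify that for general $(A,\LL)$ the specific configuration $\ell_X,\ell_Y,\ell_Z$ is still in general position with respect to the branch curve $\Delta\subset\mathbb{P}^2$ of $\varphi_L$. I expect this verification — controlling how $\Delta$ and the three distinguished lines vary together as $(A,\LL)$ moves in moduli — to be the technical heart of the argument, and it is the natural place to fall back on an explicit computation (the reason, presumably, that the authors defer the full proof to Appendix 1).
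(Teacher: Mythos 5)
Your overall strategy --- observe that $D_X$, $D_Y$, $D_Z$ are translates of one another by a $3$-torsion point of $K(\mathscr{L})$, note that the locus of pairs $(A,\mathscr{L})$ where smoothness or transversality fails is closed in the irreducible moduli space, and hence reduce to exhibiting a single good example --- is exactly the logic of the paper's Appendix 1. The problem is that you never actually produce the example: both of your proposed routes (theta functions with characteristics checked ``numerically'', or the degree-$6$ map $\varphi_L\colon A\to\mathbb{P}^2$ with the three coordinate lines) end with you conceding that the decisive verification is ``the technical heart'' to be done by explicit computation. Since the entire content of the proposition \emph{is} that verification, this is a genuine gap, not a deferrable detail. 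In particular, your second route is worse than you suggest: the three lines $\ell_X,\ell_Y,\ell_Z$ are the fixed coordinate lines of the Schr\"odinger basis, the branch curve of $\varphi_L$ is a highly special $\mathscr{H}_3$-invariant sextic (cf.\ Remark \ref{rem.cuspidal.sextic}), and there is no obvious way to move them into general position relative to one another as $(A,\mathscr{L})$ varies, so no Bertini-type statement applies.

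The idea you are missing is the one that makes the example checkable by hand: realize the $(1,3)$-polarization as a pullback under a degree-$3$ isogeny from a \emph{principally} polarized surface. Given $(B,\mathscr{M})$ principally polarized and a subgroup $K_1\cong\ZZ/3\ZZ$ of $B[3]$, one constructs (Lemma \ref{lem.triple}) an isogeny $f\colon A\to B$ of degree $3$ with $\mathscr{L}=f^*\mathscr{M}$ and $K(\mathscr{L})=\ker f\oplus f^{-1}(K_1)$; the isogeny theorem for theta functions then identifies $D_X$, $D_Y$, $D_Z$ with $f^*M$, $f^*t_x^*M$, $f^*t_{2x}^*M$, where $M$ is the unique effective divisor in $|\mathscr{M}|$ and $x$ generates $K_1$ (Lemma \ref{lem.translates}). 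Now the two verifications become trivial: taking $B=J(C)$ for $C$ a smooth genus-$2$ curve, the theta divisor $M$ is smooth, hence so are its translates and their pullbacks under the \'etale map $f$, giving smoothness for general $(A,\mathscr{L})$; taking $B=E_1\times E_2$ with the product principal polarization and $x=(x_1,x_2)$ with both coordinates nontrivial $3$-torsion points, the three divisors $M$, $t_x^*M$, $t_{2x}^*M$ are unions of horizontal and vertical elliptic curves that are pairwise without common components and meet transversally, giving transversality for general $(A,\mathscr{L})$. Note that no single example does both jobs (the product surface has singular --- reducible --- divisors, the Jacobian is not obviously transversal), which is why the paper checks the two open conditions on two different degenerations; your proposal treats them as a single verification on one surface.
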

\begin{proof}
See Appendix $1$, in particular Proposition \ref{prop.X.Y.Z}.
\end{proof}

\begin{proposition}\label{prop.dis.sec}
The $K(\mathscr{L}^{-1})$-invariant subspace of $H^0 \big(A, \,
\bigwedge^2 S^2 (\phi^*\EE^{\vee}) \otimes \bigwedge^3 (\phi^* \EE)
\big)$ can be identified with
\begin{equation}\label{eq.dis.sec}
\begin{split}
 \phi^*H^0 \big(\widehat{A}, \, \bigwedge^2 S^2 \EE^{\vee} \otimes \bigwedge^3 \EE \big)  = & \{(aZ, \, bY, \, cY, \, dX, \, -cZ, \, eX, \, -bX, \\
 & -eZ, \, -dY, \, -dZ, \, eY, \, -aX, \, aY, \, cX, \, -bZ) \mid  \\ & a, \, b, \, c, \, d, \, e \in \CC\} \subset H^0(A, \, \LL)^{\oplus 15}.
\end{split}
\end{equation}
\end{proposition}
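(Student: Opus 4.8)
The plan is to make the isomorphism
$\bigwedge^2 S^2(\phi^*\EE^{\vee}) \otimes \bigwedge^3(\phi^*\EE) \cong H^0(A,\LL)^{\oplus 15}$
completely explicit, and then to track the action of $K(\LL^{-1})$ through it. First I would fix a convenient trivialization: by \eqref{eq.mukai} we have $\phi^*\EE^{\vee} = \LL \oplus \LL \oplus \LL$, with basis say $e_1,e_2,e_3$ of the three summands, so $S^2(\phi^*\EE^{\vee})$ is the direct sum of the six line bundles $\LL^{\otimes 2}$ spanned by $e_1^2, e_2^2, e_3^2, e_1e_2, e_1e_3, e_2e_3$; then $\bigwedge^2 S^2(\phi^*\EE^{\vee})$ is a sum of $\binom{6}{2}=15$ copies of $\LL^{\otimes 4}$, indexed by pairs of these six symmetric monomials. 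Tensoring with $\bigwedge^3(\phi^*\EE) = \LL^{-3}$ (since $\bigwedge^3 \phi^*\EE^{\vee} = \LL^{\otimes 3}$) gives $15$ copies of $\LL$, whence the stated isomorphism with $H^0(A,\LL)^{\oplus 15}$ after taking global sections. I would list the $15$ indexing pairs in a fixed order — exactly the order forcing the coordinates to come out as $(aZ,bY,cY,dX,-cZ,eX,-bX,-eZ,-dY,-dZ,eY,-aX,aY,cX,-bZ)$ in the end — so the bookkeeping matches the statement verbatim.

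Next I would compute how $K(\LL^{-1}) = K(\LL) \cong (\ZZ/3\ZZ)^2$ acts. The key input is that the Schr\"odinger representation of $\mathscr{H}_3$ on $V = H^0(A,\LL)$ descends to an action of $K(\LL)$ on $\phi^*\EE^{\vee} \cong V \otimes \LL$ (via \eqref{eq.mukai} and \cite[Prop. 3.11]{Mu81}, the summands are permuted/scaled through the Schr\"odinger action on $V$), and the descent data for $\phi \colon A \to \wA$ is precisely the requirement of $K(\LL^{-1})$-invariance. Concretely, the two generators of $(\ZZ/3\ZZ)^2$ act on the basis $\{X,Y,Z\}$ by the cyclic shift $\sigma \colon X \mapsto Y \mapsto Z \mapsto X$ and by the diagonal character $\tau \colon X \mapsto X, Y \mapsto \zeta Y, Z \mapsto \zeta^2 Z$ with $\zeta = e^{2\pi i/3}$ (up to the usual projective-factor ambiguity, which I would pin down); correspondingly they act on the three summands $e_1,e_2,e_3$ of $\phi^*\EE^{\vee}$. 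I would then push this action through $S^2$, through $\bigwedge^2$, and through the twist by $\bigwedge^3 \phi^*\EE = \LL^{-3}$ (which contributes the scalar $\det$ of the $\sigma,\tau$-action, normalizing the total action), and finally through the identification with $H^0(A,\LL)^{\oplus 15}$ where $\sigma$ and $\tau$ act simultaneously on the $15$ copies (permuting them, with signs coming from $\bigwedge^2$) and on the $\LL$-factor (permuting $X,Y,Z$).

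The invariant subspace is then cut out by two linear conditions — invariance under $\sigma$ and under $\tau$ — on the $15$ sections in $H^0(A,\LL)^{\oplus 15}$, i.e.\ on $15 \times 3 = 45$ scalars, and solving this linear system should yield exactly the $5$-parameter family displayed in \eqref{eq.dis.sec}. To identify this invariant space with $\phi^* H^0(\wA, \bigwedge^2 S^2 \EE^{\vee} \otimes \bigwedge^3 \EE)$, I would invoke the standard descent statement: for the isogeny $\phi$ with kernel $K(\LL^{-1})$, pullback $\phi^*$ identifies $H^0(\wA, \mathscr{G})$ with the $K(\LL^{-1})$-invariants in $H^0(A, \phi^*\mathscr{G})$ once the linearization is fixed — and here the natural $\mathscr{H}_3$-linearization from the Schr\"odinger representation is exactly the one making this work (this is the content of the descent theory underlying \cite{Mu81, BL04}). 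By \eqref{eq.cohom.EE^vee} the target has the expected dimension $\dim H^0(\wA, \bigwedge^2 S^2 \EE^{\vee} \otimes \bigwedge^3 \EE)$, which a Riemann--Roch check on $\wA$ should confirm equals $5$, matching the parameter count and closing the argument.

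The main obstacle I anticipate is purely organizational rather than conceptual: getting all the signs and the cyclic permutation of the $15$ coordinates to line up exactly with the displayed vector. The sign subtleties enter through $\bigwedge^2$ (reordering a wedge of two symmetric monomials flips a sign) and through the projective ambiguity in the Schr\"odinger action (the cocycle $l'(t)$ in the group law of $\mathscr{H}_3$), and one has to choose the ordering of the $15$ basis pairs and a lift of the $K(\LL)$-action consistently so that the three "distinguished divisors" $X,Y,Z$ appear in the pattern shown. I would handle this by doing the computation symbolically (indeed the paper's Appendix $2$ records a \verb|MAGMA| script for the closely related branch-curve computation, and the same style of computation verifies the present bookkeeping), presenting only the final identification in the text.
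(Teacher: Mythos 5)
Your proposal is correct and follows essentially the same route as the paper: the paper likewise writes out the Schr\"odinger representation on $V=H^0(A,\LL)$ explicitly, checks that the induced action on the $45$-dimensional space $\bigwedge^2 S^2 V^{\vee}\otimes\bigwedge^3 V\otimes V$ factors through $K(\LL^{-1})\cong(\ZZ/3\ZZ)^2$ (your "projective-factor" concern), computes the $5$-dimensional invariant subspace by direct linear algebra, and reads off the displayed vector in a fixed ordered basis of $\bigwedge^2 S^2 V^{\vee}$. The only cosmetic difference is that you organize the computation via the two generators $\sigma,\tau$ and a Riemann--Roch dimension check on $\wA$, whereas the paper simply exhibits the five invariant vectors $v_1,\dots,v_5$.
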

\begin{proof}
With respect to the basis $\{X, \, Y, \, Z\}$, the Schr\"odinger
representation can be written as
\begin{equation*}
\begin{array}{lll}
(1, \, 0, \, 0) \mapsto \left( \begin{matrix}
1&0&0\\
0&1&0\\
0&0&1
\end{matrix}\right), &
(1, \, 0, \, 1) \mapsto \left( \begin{matrix}
0&1&0\\
0&0&1\\
1&0&0
\end{matrix}\right), &
(1, \, 0, \, 2) \mapsto \left( \begin{matrix}
0&0&1\\
1&0&0\\
0&1&0
\end{matrix}\right),
\end{array}
\end{equation*}

\begin{equation*}
\begin{array}{lll}
(1, \, 1, \, 0) \mapsto \left( \begin{matrix}
1&0&0\\
0&\omega &0\\
0&0& \omega^2
\end{matrix}\right), &
(1, \, 1, \, 1) \mapsto \left( \begin{matrix}
0&1&0\\
0&0&\omega \\
\omega^2 &0&0
\end{matrix}\right), &
(1, \, 1, \, 2) \mapsto \left( \begin{matrix}
0&0&1\\
\omega &0&0\\
0& \omega^2 &0
\end{matrix}\right),
\end{array}
\end{equation*}

\begin{equation*}
\begin{array}{lll}
(1, \, 2, \, 0) \mapsto \left( \begin{matrix}
1&0&0\\
0&\omega^2 &0\\
0&0& \omega
\end{matrix}\right), &
(1, \, 2, \, 1) \mapsto \left( \begin{matrix}
0&1&0\\
0&0&\omega^2 \\
\omega &0&0
\end{matrix}\right), &
(1, \, 2, \, 2) \mapsto \left( \begin{matrix}
0&0&1\\
\omega^2 &0&0\\
0& \omega &0
\end{matrix}\right),
\end{array}
\end{equation*}

where $\omega:=e^{2\pi i/3}$. There is an induced representation of
$\mathscr{H}_3$ on the $45$-dimensional vector space $\bigwedge^2
S^2 V^{\vee} \otimes \bigwedge^3 V \otimes V$, and a long but
straightforward computation shows that this gives in turn a
representation of $K(\mathscr{L}^{-1}) \cong (\ZZ/3\ZZ)^2$. Denoting
by $\{\widehat{X}$, $\widehat{Y}$, $\widehat{Z}\}$ the dual basis of
$\{X, Y, Z\}$, one checks that the space of
$K(\mathscr{L}^{-1})$-invariant vectors in $\bigwedge^2 S^2
V^{\vee} \otimes \bigwedge^3 V \otimes V$ has dimension $5$ and is
generated by
\begin{equation*}
\begin{split}
v_1 = -(\widehat{X} \widehat{Z} \wedge \widehat{Z}^2) \otimes (X \wedge Y \wedge Z )\otimes X & + (\widehat{Y}^2 \wedge  \widehat{Y} \widehat{Z}) \otimes (X \wedge Y \wedge Z )\otimes Y \\ & +(\widehat{X}^2 \wedge  \widehat{X} \widehat{Y}) \otimes (X \wedge Y \wedge Z )\otimes Z,  \\
v_2 = -(\widehat{X} \widehat{Y} \wedge \widehat{Y}^2) \otimes (X \wedge Y \wedge Z )\otimes X & + (\widehat{X}^2 \wedge  \widehat{X} \widehat{Z}) \otimes (X \wedge Y \wedge Z )\otimes Y \\ & +(\widehat{Z}^2 \wedge  \widehat{Y} \widehat{Z}) \otimes (X \wedge Y \wedge Z )\otimes Z, \\
v_3 = (\widehat{Y}^2 \wedge \widehat{Z}^2) \otimes (X \wedge Y \wedge Z )\otimes X & + (\widehat{X}^2 \wedge  \widehat{Y}^2) \otimes (X \wedge Y \wedge Z )\otimes Y \\ & -(\widehat{X}^2 \wedge  \widehat{Z}^2) \otimes (X \wedge Y \wedge Z )\otimes Z, \\
v_4 = (\widehat{X}^2 \wedge \widehat{Y} \widehat{Z}) \otimes (X \wedge Y \wedge Z )\otimes X & - (\widehat{X} \widehat{Y}  \wedge  \widehat{Z}^2) \otimes (X \wedge Y \wedge Z )\otimes Y \\ & -(\widehat{X} \widehat{Z} \wedge  \widehat{Y}^2) \otimes (X \wedge Y \wedge Z )\otimes Z, \\
v_5 = (\widehat{X} \widehat{Y} \wedge \widehat{X} \widehat{Z}) \otimes (X \wedge Y \wedge Z )\otimes X & + (\widehat{X} \widehat{Z}  \wedge  \widehat{Y} \widehat{Z}) \otimes (X \wedge Y \wedge Z )\otimes Y \\ & -(\widehat{X} \widehat{Y} \wedge  \widehat{Y} \widehat{Z}) \otimes (X \wedge Y \wedge Z )\otimes Z. \\
\end{split}
\end{equation*}
Therefore any $K(\mathscr{L}^{-1})$-invariant vector can be written
as $av_1+bv_2+cv_3+dv_4+ev_5$ for some $a$, $b$, $c$, $d$, $e \in
\mathbb{C}$. Now the claim follows by choosing for $\bigwedge^2 S^2
V^{\vee}$ the basis
\begin{equation*}
\begin{array}{ccccc}
\{ \widehat{X}^2 \wedge \widehat{X}\widehat{Y}, & \widehat{X}^2 \wedge \widehat{X}\widehat{Z}, &  \widehat{X}^2 \wedge \widehat{Y}^2, &  \widehat{X}^2 \wedge \widehat{Y}\widehat{Z}, &  \widehat{X}^2 \wedge \widehat{Z}^2, \\
\widehat{X}\widehat{Y} \wedge \widehat{X}\widehat{Z}, & \widehat{X}\widehat{Y} \wedge \widehat{Y}^2, &  \widehat{X}\widehat{Y} \wedge \widehat{Y}\widehat{Z}, & \widehat{X}\widehat{Y} \wedge \widehat{Z}^2, &  \widehat{X}\widehat{Z} \wedge \widehat{Y}^2, \\
\widehat{X}\widehat{Z} \wedge \widehat{Y}\widehat{Z}, &
\widehat{X}\widehat{Z} \wedge \widehat{Z}^2, & \widehat{Y}^2 \wedge
\widehat{Y}\widehat{Z}, &  \widehat{Y}^2 \wedge \widehat{Z}^2, &
\widehat{Y}\widehat{Z} \wedge \widehat{Z}^2 \}.
\end{array}
\end{equation*}
\end{proof}
\begin{proposition}\label{prop.tot.decomp}  A section $\eta \in \phi^*H^0 \big(\widehat{A}, \, \bigwedge^2 S^2 \EE^{\vee} \otimes \bigwedge^3 \EE \big)$ corresponding to a vector as in \eqref{eq.dis.sec}
is totally decomposable if and only if
\begin{itemize}
\item[$\boldsymbol{(i)}$] $b=-a$, $d=0$ and $e=-a^2/c$, or
\item[$\boldsymbol{(ii)}$] $a=b=d=e=0$, or
\item[$\boldsymbol{(iii)}$] $a=b=c=d=0$.
\end{itemize}
In other words, the totally decomposable sections $($up to a
multiplicative constant$)$ are in one-to-one correspondence with the
points of the smooth conic of equations
\begin{equation*}
a^2+ce=0, \quad b+a=0, \quad d=0
\end{equation*}
in the projective space $\mathbb{P}^4$ with homogeneous coordinates
$[a:b:c:d:e]$.
\end{proposition}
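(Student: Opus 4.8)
The plan is to translate ``totally decomposable'' into a pointwise condition on the plane $\mathbb{P}^2=\mathbb{P}(H^0(A,\LL)^\vee)$ and then settle it by Grassmann algebra in $\bigwedge^2 S^2 V^\vee\cong\bigwedge^2\mathbb{C}^6$. Using $\phi^*\EE^\vee\cong\LL^{\oplus 3}$ from \eqref{eq.mukai}, trivialise $\LL$ near a point $p\in A$; then the line $\operatorname{Im}(\eta|_p)\subseteq(\bigwedge^2 S^2\phi^*\EE^\vee)_p$ becomes a point of $\mathbb{P}(\bigwedge^2 S^2 V^\vee)=\mathbb{P}^{14}$, and a direct check shows it depends on $p$ only through $[X(p):Y(p):Z(p)]\in\mathbb{P}^2$. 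In the basis of $\bigwedge^2 S^2 V^\vee$ used in the proof of Proposition~\ref{prop.dis.sec} this point is $[\Xi(\alpha,\beta,\gamma)]$, where $\Xi(\alpha,\beta,\gamma)$ is the $2$-vector whose fifteen coordinates are the entries of \eqref{eq.dis.sec} with $X,Y,Z$ replaced by the homogeneous coordinates $\alpha,\beta,\gamma$. Since decomposability of a $2$-vector is a Zariski-closed condition (equivalently $\Xi\wedge\Xi=0$ in $\bigwedge^4 S^2 V^\vee$, i.e.\ the Plücker relations of $\operatorname{Gr}(2,6)$ vanish), and the image of $\phi_{|\LL|}\colon A\to\mathbb{P}^2$ is contained in no conic of $\mathbb{P}^2$ (being either $\mathbb{P}^2$ itself or a plane cubic), $\eta$ is totally decomposable if and only if $\Xi(\alpha,\beta,\gamma)$ is decomposable for every $[\alpha:\beta:\gamma]\in\mathbb{P}^2$. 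As $\Xi(\alpha,\beta,\gamma)$ is linear in $(\alpha,\beta,\gamma)$, this amounts to asking that the linear span of $\Xi_X:=\Xi(1,0,0)$, $\Xi_Y:=\Xi(0,1,0)$, $\Xi_Z:=\Xi(0,0,1)$ be a linear subspace of $\operatorname{Gr}(2,6)$.

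For necessity I would impose decomposability of $\Xi$ at two test points. From $\Xi(1,1,0)$, the $4\times4$ sub-Pfaffians of the skew-symmetric $6\times6$ matrix associated with $\Xi$, taken on the index sets $\{1,2,5,6\}$, $\{3,4,5,6\}$, $\{2,3,4,5\}$, $\{1,3,4,6\}$, $\{2,3,4,6\}$, give respectively $d^2=0$, $a^2+ce=0$, $e(a+b)=0$, $c(a+b)=0$, $ab=ce$; combined, these force $d=0$, $a^2+ce=0$ and $(a+b)a=(a+b)c=(a+b)e=0$. If $(a,c,e)\neq(0,0,0)$ this gives $b+a=0$; in the remaining case $a=c=e=0$, decomposability of $\Xi(1,1,1)$ (its sub-Pfaffian on $\{1,3,5,6\}$, equal to $ad-ce-b^2$) forces $b=0$, so again $b+a=0$. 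Hence every totally decomposable $\eta$ satisfies $b+a=0$, $d=0$, $a^2+ce=0$, and one checks that these equations describe exactly the $5$-tuples falling under one of the cases $\boldsymbol{(i)}$, $\boldsymbol{(ii)}$, $\boldsymbol{(iii)}$ (the last two being the points of this conic with, respectively, $a=e=0$ and $a=c=0$).

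Conversely, assume $b+a=0$, $d=0$, $a^2+ce=0$, and parametrise the resulting smooth conic by $(a,b,c,d,e)=(st,-st,-s^2,0,t^2)$, $(s:t)\in\mathbb{P}^1$. Setting
\[
Q_1=t\,\widehat X\widehat Y+s\,\widehat Z^2,\qquad Q_2=t\,\widehat X\widehat Z+s\,\widehat Y^2,\qquad Q_3=s\,\widehat X^2+t\,\widehat Y\widehat Z
\]
in $S^2 V^\vee$, a short computation with the explicit coordinates of $\Xi_X,\Xi_Y,\Xi_Z$ gives
\[
\Xi_X=Q_1\wedge Q_2,\qquad \Xi_Y=Q_2\wedge Q_3,\qquad \Xi_Z=Q_3\wedge Q_1,
\]
whence $\Xi(\alpha,\beta,\gamma)=\alpha\,Q_1\wedge Q_2+\beta\,Q_2\wedge Q_3+\gamma\,Q_3\wedge Q_1$ lies in $\bigwedge^2 P$ with $P:=\langle Q_1,Q_2,Q_3\rangle$. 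Since $Q_1,Q_2,Q_3$ involve pairwise disjoint monomials they are linearly independent for every $(s:t)$, so $\dim P=3$, and every element of $\bigwedge^2$ of a $3$-dimensional space is decomposable. Thus $\Xi(\alpha,\beta,\gamma)$ is decomposable for all $[\alpha:\beta:\gamma]$ — equivalently the span $\langle\Xi_X,\Xi_Y,\Xi_Z\rangle$ is the sub-Grassmannian $\operatorname{Gr}(2,P)\cong\mathbb{P}^2\subseteq\operatorname{Gr}(2,6)$ — and $\eta$ is totally decomposable. The final reformulation, identifying the decomposable locus with the conic $a^2+ce=0$, $b+a=0$, $d=0$ in $\mathbb{P}^4$, is then immediate.

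The step I expect to cost the most is the sufficiency direction: one must make sure that the three equations, derived from only two test points, actually force $\Xi$ to be decomposable over \emph{all} of $\mathbb{P}^2$ and not just at those points. The factorisations $\Xi_X=Q_1\wedge Q_2$, $\Xi_Y=Q_2\wedge Q_3$, $\Xi_Z=Q_3\wedge Q_1$, which exhibit the whole span of the $\Xi$'s inside a single copy of $\operatorname{Gr}(2,3)$, are what make this transparent, and guessing the $Q_i$ is the only genuinely nontrivial piece of the computation. (The reduction to $\mathbb{P}^2$ in the first step also deserves a word of care, but it is harmless since the image of $\phi_{|\LL|}$ is never contained in a conic.)
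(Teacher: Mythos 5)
Your proof is correct and follows essentially the same route as the paper: both reduce total decomposability to the Pl\"ucker relations of $\mathrm{Gr}(2,6)$ applied to the fifteen coefficients $c_{ij}$ of \eqref{eq.cij} (this is \cite[Theorem 3.1]{HM99}, which the paper invokes directly) and then solve the resulting quadratic system, the paper merely asserting the outcome of the substitution where you organise the necessity via sub-Pfaffians at the test points $[1:1:0]$ and $[1:1:1]$. Your sufficiency step --- writing $\Xi_X=Q_1\wedge Q_2$, $\Xi_Y=Q_2\wedge Q_3$, $\Xi_Z=Q_3\wedge Q_1$ with $Q_1=t\,\widehat{X}\widehat{Y}+s\,\widehat{Z}^2$, $Q_2=t\,\widehat{X}\widehat{Z}+s\,\widehat{Y}^2$, $Q_3=s\,\widehat{X}^2+t\,\widehat{Y}\widehat{Z}$, so that the whole net of $2$-vectors lies in $\bigwedge^2$ of a fixed $3$-plane and is automatically decomposable --- checks out against \eqref{eq.cij} in the basis used in the proof of Proposition \ref{prop.dis.sec}, and is a cleaner verification than brute-force substitution into all fifteen relations.
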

\begin{proof}
Proposition \ref{prop.dis.sec} allows one to identify the building
data $c_{ij}$ in \cite[p. 7]{HM99} as follows:
\begin{equation}\label{eq.cij}
\begin{array}{lllll}
c_{12}=aZ, & c_{13}=bY, & c_{14}=cY, & c_{15}=dX, & c_{16}=-cZ, \\
c_{23}=eX, & c_{24}=-bX, & c_{25}=-eZ, & c_{26}=-dY, & c_{34}=-dZ, \\
c_{35}=eY, & c_{36}=-aX, & c_{45}=aY, & c_{46}=cX, & c_{56}=-bZ.
\end{array}
\end{equation}
By \cite[Theorem 3.1]{HM99} the corresponding section $\eta\in
\phi^*H^0 \big(\widehat{A}, \, \bigwedge^2 S^2 \EE^{\vee} \otimes
\bigwedge^3 \EE \big)$ is totally decomposable if and only if the
$c_{ij}$ satisfy the Pl\"ucker relations:
\begin{equation}\label{eq.plu}
\begin{array}{ccc}
c_{12}c_{34}-c_{13}c_{24}+c_{14}c_{23}=0, & c_{12}c_{35}-c_{13}c_{25}+c_{15}c_{23}=0, & c_{12}c_{36}-c_{13}c_{26}+c_{16}c_{23}=0, \\
c_{12}c_{45}-c_{14}c_{25}+c_{15}c_{24}=0, & c_{12}c_{46}-c_{14}c_{26}+c_{16}c_{24}=0, & c_{12}c_{56}-c_{15}c_{26}+c_{16}c_{25}=0, \\
c_{13}c_{45}-c_{14}c_{35}+c_{15}c_{34}=0, & c_{13}c_{46}-c_{14}c_{36}+c_{16}c_{34}=0, & c_{13}c_{56}-c_{15}c_{36}+c_{16}c_{35}=0, \\
c_{14}c_{56}-c_{15}c_{46}+c_{16}c_{45}=0, & c_{23}c_{45}-c_{24}c_{35}+c_{25}c_{34}=0, & c_{23}c_{46}-c_{24}c_{36}+c_{26}c_{34}=0, \\
c_{23}c_{56}-c_{25}c_{36}+c_{26}c_{35}=0, & c_{24}c_{56}-c_{25}c_{46}+c_{26}c_{45}=0, & c_{34}c_{56}-c_{35}c_{46}+c_{36}c_{45}=0. \\
\end{array}
\end{equation}
Substituting in \eqref{eq.plu} the values given in \eqref{eq.cij} we
get the result.
\end{proof}
%
%
%

Let us take now a point $[a: c] \in \mathbb{P}^1$.  By Theorem
\ref{theo.mir} and Proposition \ref{prop.tot.decomp} there exists a
quadruple cover $\alpha \colon S \longrightarrow A$ induced by the
totally decomposable section $\eta\in \phi^*H^0 \big(\widehat{A}, \,
\bigwedge^2 S^2 \EE^{\vee} \otimes \bigwedge^3 \EE \big)$
represented by the point $[a: -a:  c:  0: -a^2/c] \in \mathbb{P}^4$;
note that cases $\boldsymbol{(ii)}$ and $\boldsymbol{(iii)}$ in
Proposition \ref{prop.tot.decomp}  correspond to  $[a:c]=[0:1]$ and
$[a:c]=[1:0]$, respectively. By construction the cover $\alpha$ is
$K(\mathscr{L}^{-1})$-equivariant, so it induces a quadruple cover
$\hat{\alpha} \colon \widehat{S} \longrightarrow \widehat{A}$ that
fits into a commutative diagram
\begin{equation} \label{dia.projs}
\begin{xy}
\xymatrix{
S  \ar[d]_{\alpha} \ar[rr]^{\psi} & & \widehat{S} \ar[d]^{\widehat{\alpha}} \\
A   \ar[rr]^{\phi_{\mathscr{L}^{-1}}} & & \widehat{A},  \\
  }
\end{xy}
\end{equation}
where $\psi$ is an \'etale $(\mathbb{Z}/3 \mathbb{Z})^2$-cover.

We denote by $B$ and $R$ the branch divisor and the ramification
divisor of $\alpha \colon S \lr A$ and by $\widehat{B}$ and
$\widehat{R}$ those of $\hat{\alpha} \colon \wS \lr \wA$.

Following \cite{HM99}, over an affine open subset $U \subset A$ we
can describe the quadruple cover $\alpha\colon S \longrightarrow A$
as
\begin{equation*}
\textrm{Spec} \, \frac{\mathscr{O}_{U}[u, \,v, \, w]}{(F_1, \ldots ,
\, F_6)},
\end{equation*}
with
\begin{equation}\label{eq.qua.cover}
\begin{split}
F_1 &=u^2 -(a_1u+a_2v+a_3w+b_1), \\
F_2 &=uv -(a_4u+a_5v+a_6w+b_2), \\
F_3 &=uw -(a_7u+a_8v+a_9w+b_3),\\
F_4 &=v^2 -(a_{10}u+a_{11}v+a_{12}w+b_4), \\
F_5 &=vw - (a_{13}u+a_{14}v+a_{15}w+b_5), \\
F_6 &=w^2 -(a_{16}u+a_{17}v+a_{18}w+b_6).\\
\end{split}
\end{equation}
Here
\begin{equation*}
\begin{array}{lll}
a_{1}=\frac{1}{2}c_{23}, & a_{2}=-c_{13}, & a_{3}=c_{12}, \\ a_{4}=\frac{1}{4}c_{25}-\frac{1}{2}c_{34}, & a_{5}=-\frac{1}{2}c_{15}-\frac{1}{4}c_{23}, & a_{6}=c_{14}, \\
a_{7}=\frac{1}{2}c_{26}-\frac{1}{4}c_{35}, & a_{8}=-c_{16},  &
a_{9}=\frac{1}{2}c_{15}-\frac{1}{4}c_{23}, \\  a_{10}=c_{45},
& a_{11}=-\frac{1}{2}c_{25}, & a_{12}=c_{24}, \\
a_{13}=c_{46}, & a_{14}=-\frac{1}{2}c_{26}-\frac{1}{4}c_{35}, &
a_{15}=\frac{1}{4}c_{25}+\frac{1}{2}c_{34}, \\  a_{16}=c_{56}, &
a_{17}=-c_{36}, & a_{18}=\frac{1}{2}c_{35}
\end{array}
\end{equation*}
and
\begin{equation*}
\begin{array}{l}
b_1 =-a_1a_5+a_2a_4-a_2a_{11}-a_3a_{14}+a_5^2+a_6a_8, \\
b_2 =a_2a_{10}+a_3a_{13}-a_4a_5-a_6a_7, \\
b_3 =a_2a_{13}+a_3a_{16}-a_4a_8-a_7a_9, \\
b_4 =-a_1a_{10}+a_4^2-a_4a_{11}+a_5a_{10}+a_6a_{13}-a_7a_{12}, \\
b_5 =-a_5a_{13}+a_8a_{10}+a_{12}a_{17}-a_{14}a_{15}, \\
b_6 =-a_1a_{16}-a_4a_{17}+a_7^2-a_7a_{18}+a_8a_{13}+a_9a_{16}, \\
\end{array}
\end{equation*}
where the $c_{ij}$ are given in \eqref{eq.cij}.

Using the \verb|MAGMA| script in the Appendix 2 we can check that
the branch locus $B$ of $\alpha\colon S \longrightarrow A$ is the
element in $|6L|$ of equation
\begin{equation}\label{eq.branch.S}
\begin{split}
a^8c^4(X^6+Y^6+Z^6)& + a^2c\bigg(-\frac{4}{27}a^9-\frac{2}{9}a^6c^3-\frac{64}{9}a^3c^6 +\frac{256}{27}c^9\bigg)(X^3Y^3+X^3Z^3+Y^3Z^3)\\
                     & + a^4c^2\bigg(-\frac{2}{3}a^6+\frac{16}{3}a^3c^3-\frac{32}{3}c^6\bigg)XYZ(X^3+Y^3+Z^3) \\
                     & +\bigg(-\frac{1}{27}a^{12}-\frac{92}{27}a^9c^3+\frac{112}{9}a^6c^6+\frac{256}{27}a^3c^9-\frac{256}{27}c^{12}\bigg)X^2Y^2Z^2=0.
\end{split}
\end{equation}
We can also see \eqref{eq.branch.S} as the equation of a sextic
curve in the dual projective plane $(\mathbb{P}^{2})^{\vee}$ with
homogeneous coordinates $[X:Y:Z]$; we shall denote such a curve by
$\mathbf{B}$. Varying the point $[a:c] \in \mathbb{P}^1$, the curves
$\mathbf{B}$ form a (non-linear) pencil in $\mathbb{P}^{2 \vee}$,
which turns out to be the pencil of the dual curves of members of
the Hesse pencil. In fact, $\mathbf{B}$ is precisely the dual of the
curve $E_{m_0,\, 3m_1}$ with $m_0=a^2c$ and $m_1=\frac{1}{6} a^3 -
\frac{2}{3}c^3$, see Subsection \ref{sub.hal}.

\begin{remark} \label{rem.cuspidal.sextic}
The $\mathscr{H}_3$-equivariant sextic $\mathbf{B}$ also appears,
for some special choices of the pair $(A, \, \mathscr{L})$, as a
component of the branch locus of the $6$-fold cover
$\varphi_{|\mathscr{L}|} \colon A \to \mathbb{P}H^0(A, \,
\mathscr{L})^{\vee}$. See for instance \emph{\cite{BL94}} and
\emph{\cite{Cas99}}.
\end{remark}

The group $K(\mathscr{L}^{-1})$ acts on $\mathbf{B}$ and this
induces an action on $B$. The quotient of $B$ by this action is a
curve $\widehat{B} \in |2\widehat{L}|$, where $\widehat{L}$ is the
polarization of type $(1, \, 3)$ on $\wA$ appearing in
\eqref{eq-chern.E}; the curve $\widehat{B}$ is precisely the branch
locus of the quadruple cover $\hat{\alpha} \colon \widehat{S}
\longrightarrow \widehat{A}$.

Now set
\begin{equation*}
\begin{split}
T_1 &:=\{ [1:1], \, [\omega:1], \, [\omega^2:1], \, [1:0] \}, \\
T_2 &:=\{ [-2:1], \, [-2\omega:1], \, [-2\omega^2:1], \, [0:1] \}.
\end{split}
\end{equation*}

If $[a : c] \notin T_1 \cup T_2$ then $\mathbf{B}$ has nine ordinary
cusps as only singularities; since $X, \,Y, \,Z \in H^0(A, \,
\mathscr{L})$ and $L^2=6$, by Proposition \ref{prop.X.Y.Z-art} it
follows that, for a general choice of the pair $(A, \,
\mathscr{L})$, the curve $B$ has $54$ ordinary cusps as only
singularities. In this case by a \verb|MAGMA| calculation (see again
the script in Appendix 2) shows that $S$ is smooth; since $\psi
\colon S \longrightarrow \widehat{S}$ is \'etale, the surface
$\widehat{S}$ is also smooth. Moreover, the nine cusps of
$\mathbf{B}$ belong to a single $K(\LL^{-1})$-orbit and the
stabilizer of each cusp is the identity. Therefore the $54$ cusps of
$B$ fall in precisely six orbits and consequently $\widehat{B}$ is
an irreducible curve with six cusps as only singularities.

If $[a : c] \in  T_1 \cup T_2$ then $\mathbf{B}=2\mathbf{B}'$, where
$\mathbf{B}'$ is a triangle. Consequently, $B=2B'$, where $B'$ has
$18$ ordinary double points as only singularities. In this case the
\verb|MAGMA| script shows that $S$ is smooth if and only if $[a : c]
\notin T_1$; if instead $[a : c] \in T_1$  then the singular locus
of $S$ coincides with the preimage of $B$. Moreover, the group
$K(\LL^{-1})$ acts on the three sides of the triangle $\mathbf{B}'$;
each side has
 stabilizer isomorphic to $\ZZ/ 3 \ZZ$ and there is only one orbit; consequently,
$\widehat{B}=2\widehat{B}'$, where $\widehat{B}'$ is an irreducible
curve with two ordinary double points as only singularities.

In any case, the preimage of a general point in the branch divisor
$\widehat{B}$ consists of \emph{three} distinct points. This shows
that the quadruple cover $\alpha \colon \widehat{S} \lr \widehat{A}$
cannot factor through a double cover, hence it must be the Albanese
map of $\widehat{S}$.

Summing up, we have proven

\begin{proposition}\label{prop.sing.covers} Let $(A, \, \mathscr{L})$ be a general
$(1,\, 3)$-polarized abelian surface. Then the following holds:
\begin{itemize}
\item[$\boldsymbol{(i)}$] the surface $\widehat{S}$ is smooth
precisely when  $[a:c] \notin T_1$, whereas if $[a:c] \in
 T_1$ it has a $1$-dimensional singular locus;
\item[$\boldsymbol{(ii)}$] if $\widehat{S}$ is smooth and $[a:c]
\notin T_2$ then the branch locus $\widehat{B}$ of
$\hat{\alpha}\colon \widehat{S} \longrightarrow \widehat{A}$ is an
irreducible curve in $|2 \widehat{L}|$, where $\widehat{L}$ is a
polarization of type $(1, \, 3)$ on $\wA$, with six ordinary cusps
as only singularities; \item[$\boldsymbol{(iii)}$] if $\widehat{S}$
is smooth and $[a:c] \in T_2$ then $\widehat{B}=2\widehat{B}'$,
where $\widehat{B}'$ is an irreducible curve in $|\widehat{L}|$ with
two ordinary double points as only singularities;
\item[$\boldsymbol{(iv)}$] $\hat{\alpha} \colon \widehat{S} \lr
\widehat{A}$ is the Albanese map of $\widehat{S}$.
\end{itemize}
 \end{proposition}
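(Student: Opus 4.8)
The plan is to prove the four statements of Proposition~\ref{prop.sing.covers} essentially by unwinding the explicit equations already assembled in Section~\ref{sec.constr}, organizing the argument around the two critical loci $T_1$ and $T_2$ in the parameter space $\mathbb{P}^1$ of pairs $[a:c]$. Throughout I would fix a general $(1,\,3)$-polarized abelian surface $(A,\,\mathscr{L})$, so that by Proposition~\ref{prop.X.Y.Z-art} the three divisors cut out by $X$, $Y$, $Z \in H^0(A,\,\mathscr{L})$ are smooth and pairwise transverse; this genericity hypothesis is what lets us transfer statements about the plane sextic $\mathbf{B} \subset (\mathbb{P}^2)^\vee$ of equation \eqref{eq.branch.S} to statements about the actual branch curve $B \subset A$, since $B$ is the pullback of $\mathbf{B}$ under the map $A \to (\mathbb{P}^2)^\vee$ induced by $|\mathscr{L}|$ (here one uses that $L^2=6$, so a cusp of $\mathbf{B}$ pulls back to six cusps of $B$, and a node to six nodes, provided $\mathbf{B}$ avoids the branch locus of that map — which holds generically).

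For part~$\boldsymbol{(i)}$, the first step is the purely plane-curve analysis of the pencil $\{\mathbf{B}\}$: identifying it, via the dictionary $m_0 = a^2c$, $m_1 = a^3/2 - 2c^3$, with the pencil of duals of the Hesse cubics recalled in Subsection~\ref{sub.hal}, one reads off that $\mathbf{B}$ is an irreducible nine-cuspidal sextic when $[a:c] \notin T_1 \cup T_2$, and that $\mathbf{B} = 2\mathbf{B}'$ with $\mathbf{B}'$ a triangle (the dual of a Hesse triangle, counted twice) when $[a:c] \in T_1 \cup T_2$. The second step is the \texttt{MAGMA} computation referenced in the text: feeding the explicit affine equations \eqref{eq.qua.cover} together with the $a_i$, $b_j$ in terms of the $c_{ij}$ of \eqref{eq.cij} (with $[a:b:c:d:e] = [a:-a:c:0:-a^2/c]$) into the Jacobian-criterion check, one verifies that $S$ is smooth exactly when $[a:c] \notin T_1$, and that for $[a:c] \in T_1$ the singular locus of $S$ is precisely the preimage of the triangle $B = 2B'$, hence one-dimensional. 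Since $\psi \colon S \to \widehat{S}$ is étale by the construction of diagram~\eqref{dia.projs}, smoothness and the dimension of the singular locus descend verbatim to $\widehat{S}$.

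Parts~$\boldsymbol{(ii)}$ and $\boldsymbol{(iii)}$ then follow by descending the singularity analysis of $B$ along the étale $(\mathbb{Z}/3\mathbb{Z})^2$-quotient $B \to \widehat{B}$. The key input is the orbit structure of $K(\mathscr{L}^{-1}) \cong (\mathbb{Z}/3\mathbb{Z})^2$ acting on $\mathbf{B}$: when $[a:c]\notin T_1\cup T_2$ the nine cusps of $\mathbf{B}$ form a single free orbit, so the $54$ cusps of $B$ form six free orbits and $\widehat{B} = B/K(\mathscr{L}^{-1}) \in |2\widehat{L}|$ is irreducible with exactly six ordinary cusps; when $[a:c]\in T_2$ (and $\widehat S$ is smooth, i.e.\ $[a:c]\notin T_1$) the sides of the triangle $\mathbf{B}'$ form a single orbit with each side stabilized by a $\mathbb{Z}/3\mathbb{Z}$, so $B' \to \widehat{B}'$ is again étale of degree $3$, and the $18$ nodes of $B'$ (which come in six free orbits) push down to two ordinary double points on the irreducible curve $\widehat{B}' \in |\widehat{L}|$, with $\widehat{B} = 2\widehat{B}'$. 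Irreducibility of $\widehat{B}$ (resp.\ $\widehat{B}'$) follows from irreducibility of $B$ (resp.\ $B'$) together with connectedness, or directly from the transitivity of the monodromy; the membership in the stated linear systems is forced by $c_1(\EE^\vee) = \widehat L$ from \eqref{eq-chern.E} and the pullback relation $\phi^*\widehat B = B$.

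Finally, part~$\boldsymbol{(iv)}$ is the short qualitative argument already sketched in the text: in every case $(\boldsymbol{(ii)}$ and $\boldsymbol{(iii)})$ the generic fibre of $\widehat{\alpha}$ over a point of $\widehat{B}$ consists of three distinct points, so $\widehat{\alpha}$ does not factor through any double cover of $\widehat{A}$; since $q(\widehat{S}) = 2$ (from \eqref{eq.inv.hacca} and \eqref{eq.cohom.EE^vee}, anticipating Proposition~\ref{prop.num.invariants.cover}) and $\widehat{A}$ is an abelian surface receiving a degree-$4$ map from $\widehat{S}$, the universal property of the Albanese forces $\widehat{\alpha}$ to coincide with $\mathrm{alb}_{\widehat{S}}$ up to translation — a degree-$4$ map to an abelian surface from an irregularity-$2$ surface that does not factor through an intermediate abelian variety must be the Albanese map. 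I expect the main obstacle to be not any single deep point but the bookkeeping in the descent step: one must check carefully that the $K(\mathscr{L}^{-1})$-action on $B$ (equivalently on $\mathbf{B}$) has the claimed orbit structure on the singular points — in particular that the stabilizers are exactly as stated and that no extra singularities are created or destroyed in passing to the quotient — and that the genericity of $(A,\,\mathscr{L})$ is genuinely sufficient to guarantee $B$ has no singularities beyond those forced by $\mathbf{B}$; this is where invoking Proposition~\ref{prop.X.Y.Z-art} and the \texttt{MAGMA} verification does the real work.
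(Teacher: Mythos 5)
Your proposal follows the paper's own argument essentially step for step: the identification of the pencil $\{\mathbf{B}\}$ with the duals of the Hesse cubics via $m_0=a^2c$, $m_1=a^3/2-2c^3$, the genericity input from Proposition \ref{prop.X.Y.Z-art} to get exactly $54$ cusps (resp.\ $18$ nodes) on $B$, the \verb|MAGMA| Jacobian check for smoothness of $S$ and \'etale descent to $\widehat{S}$, the orbit/stabilizer analysis of the $K(\mathscr{L}^{-1})$-action on the cusps and on the sides of the triangle, and the ``three distinct points over a general branch point'' argument for $\boldsymbol{(iv)}$. It is correct and matches the paper's proof; the only additions are minor elaborations (the explicit appeal to the Albanese universal property and the caveat about $\mathbf{B}$ meeting the branch locus of $\varphi_{|\mathscr{L}|}$) that do not change the route.
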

%
%
%
We can now compute the invariants of $\widehat{S}$.
\begin{proposition}\label{prop.num.invariants.cover}
If $[a:c] \notin T_1$ and $\widehat{A}$ is general, then
$\widehat{S}$ is a minimal surface of general type with $p_g=q=2$
and $K^2=6$. The canonical class $K_{\widehat{S}}$ is ample and the
general element of $|K_{\widehat{S}}|$ is smooth and irreducible.
\end{proposition}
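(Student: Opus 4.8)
The plan is to run the two covers $\alpha\colon S\to A$ and $\hat\alpha\colon\widehat S\to\widehat A$ through the Casnati–Ekedahl formulae in Proposition \ref{prop.invariants}, using the Chern data already computed in \eqref{eq-chern.E} together with the cohomology in \eqref{eq.cohom.EE^vee} and the étale base change \eqref{dia.projs}. First I would record the numerical input: on $\widehat A$ one has $K_{\widehat A}=0$, $\chi(\mathscr O_{\widehat A})=0$, $c_1(\EE^\vee)=\widehat L$, $c_2(\EE^\vee)=2$ with $\widehat L^2=6$, and for the rank-$2$ bundle $\FF$ one uses $\bigwedge^2\FF\cong\bigwedge^3\EE^\vee$, so $c_1(\FF)=\widehat L$ and hence (by Riemann–Roch on the surface, or by the exact sequence \eqref{suc.effe}) $c_2(\FF)$ is determined. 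Feeding these into Proposition \ref{prop.invariants}(i)--(ii) gives $\chi(\mathscr O_{\widehat S})=4\cdot 0+\tfrac12\widehat L\cdot 0+\tfrac12\cdot 6-2=1$ and $K^2_{\widehat S}=0+0+12-8+c_2(\FF)$, and one checks $c_2(\FF)=2$, yielding $K^2_{\widehat S}=6$. The invariant $q$ comes from \eqref{eq.inv.hacca}: $q(\widehat S)=h^1(\widehat A,\mathscr O_{\widehat A})+h^1(\widehat A,\EE)=2+h^1(\widehat A,\EE)$, and by Serre duality $h^1(\widehat A,\EE)=h^1(\widehat A,\EE^\vee)=0$ by \eqref{eq.cohom.EE^vee}; likewise $p_g(\widehat S)=h^2(\widehat A,\mathscr O_{\widehat A})+h^2(\widehat A,\EE)=1+h^0(\widehat A,\EE^\vee)$, but here one must be careful — $h^2(\widehat A,\EE)=h^0(\widehat A,\EE^\vee)=1$ by \eqref{eq.cohom.EE^vee}, giving $p_g(\widehat S)=2$. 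So $p_g=q=2$, $\chi=1$, $K^2=6$ as claimed, and since $\widehat S$ is of general type once we know it is minimal with $K^2>0$, the birational classification is forced by $\chi=1$, $K^2=6$ (it cannot be ruled/Enriques/K3/abelian/bielliptic since $p_g=2$, and it cannot be properly elliptic since $K^2=6>0$).

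Minimality and ampleness of $K_{\widehat S}$ are the geometric heart of the argument, and here I expect the main obstacle. Since $\widehat\alpha$ is a finite cover of an abelian surface, $K_{\widehat A}=0$ and $\omega_{\widehat S}=\widehat\alpha^*\omega_{\widehat A}\otimes\mathscr O_{\widehat S}(\widehat R)=\mathscr O_{\widehat S}(\widehat R)$, so $K_{\widehat S}$ is effective, supported on the ramification divisor $\widehat R$. A curve $C\subset\widehat S$ with $K_{\widehat S}\cdot C\le 0$ would have to satisfy $\widehat R\cdot C\le 0$, which (since $\widehat R$ is effective) forces $C$ to be a component of $\widehat R$ with $C^2<0$; pulling back to $S$ via the étale map $\psi$ and using that $R=\psi^*\widehat R$ (étale, so no further ramification) this would give a negative curve in $R\subset S$. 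I would rule this out by analysing $R$ directly: from Proposition \ref{prop.sing.covers}(ii)--(iii), for $[a:c]\notin T_1$ the branch curve $\widehat B$ is irreducible (or twice an irreducible curve), so $\widehat R$ maps onto it and is itself irreducible of positive self-intersection — indeed $\widehat R^2=K_{\widehat S}^2=6>0$ and $\widehat R$ irreducible forces every curve $C$ to have $\widehat R\cdot C>0$ unless $C$ is a $(-1)$- or $(-2)$-curve disjoint-in-the-wrong-way, which is excluded because such a $C$ would lie in a fibre-like configuration incompatible with $\widehat\alpha$ being finite onto $\widehat A$ (an abelian surface contains no rational curves, so $\widehat\alpha(C)$ is a point, impossible for a finite morphism). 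Concretely I would argue: $K_{\widehat S}\cdot C = \widehat R\cdot C \ge 0$ always, with equality iff $C$ is disjoint from $\widehat R$; but a curve disjoint from $\widehat R$ is mapped isomorphically by $\widehat\alpha$ to its image in $\widehat A$, an abelian surface, so $C$ has genus $\ge 1$ and moves in a family, while $K_{\widehat S}\cdot C=0$ and $K_{\widehat S}$ big gives $C^2<0$ by Hodge index — a contradiction. Hence $K_{\widehat S}\cdot C>0$ for all curves $C$, so $K_{\widehat S}$ is ample and $\widehat S$ is minimal.

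For the statement about $|K_{\widehat S}|$: since $p_g(\widehat S)=2$ the canonical system is a pencil (possibly with fixed part). I would identify $H^0(\widehat S,\omega_{\widehat S})$ via \eqref{eq.inv.hacca}: pushing forward, $\widehat\alpha_*\omega_{\widehat S}=\omega_{\widehat A}\oplus(\EE\otimes\omega_{\widehat A})=\mathscr O_{\widehat A}\oplus\EE$ (using $\omega_{\widehat A}=\mathscr O_{\widehat A}$ — one should double check the twist coming from the relative dualizing sheaf and the Tschirnhausen description, but the upshot is that canonical forms correspond to $H^0(\widehat A,\mathscr O_{\widehat A})\oplus H^0(\widehat A,\EE)$, hence $1+1=2$-dimensional), and the geometry of the generic member is controlled by the single section of $\EE^\vee$ in \eqref{eq.cohom.EE^vee} — or rather of $\EE$. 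Then smoothness and irreducibility of the generic $K_{\widehat S}$-divisor follows from a Bertini argument applied to this pencil once one knows it has no fixed component and at most isolated base points: $K_{\widehat S}$ being ample and big with $K^2=6$, the general member is a curve of genus $p_a=1+\tfrac12 K^2_{\widehat S}=1+3=$... wait, $p_a(K)=\tfrac12(K^2+K\cdot K)+1=K^2+1=7$; in any case it is a smooth irreducible curve by Bertini provided the linear system is not composed with a pencil, which is excluded because $\widehat S$ has no pencil over a positive-genus curve (this is precisely Proposition \ref{rem-no-irr} — one would invoke it, or argue directly that the Albanese image generates so $q=2$ forbids $\widehat S\to$ curve being the Albanese factorisation). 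The main obstacle remains the ampleness/minimality step, where one must genuinely exclude curves meeting $\widehat R$ too little; the cohomological computation of $p_g,q,K^2$ is, by contrast, a direct substitution into Proposition \ref{prop.invariants} once $c_2(\FF)$ is pinned down from \eqref{suc.effe}.
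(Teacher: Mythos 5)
Your computations of $p_g=q=2$ and your minimality/ampleness argument are in substance the paper's: the entire content of the latter is that a surface admitting a finite morphism onto an abelian variety contains no rational curves, hence is a minimal model with ample canonical class (your detour through Hodge index and curves disjoint from $\widehat R$ ultimately reduces to exactly this fact). The genuine gap is in the computation of $K^2=6$. You route it through Proposition \ref{prop.invariants}$(ii)$, which requires $c_2(\FF)$, and you assert that $c_2(\FF)$ ``is determined'' by $\bigwedge^2\FF\cong\bigwedge^3\EE^{\vee}$ together with Riemann--Roch or the sequence \eqref{suc.effe}. It is not: that isomorphism only fixes $c_1(\FF)=\widehat{L}$, and extracting $c_2(\FF)$ from \eqref{suc.effe} would require the Chern character of $f_*\omega^2_{\widehat{S}|\widehat{A}}$, which by Grothendieck--Riemann--Roch already encodes $K^2_{\widehat{S}}$ --- so the suggested route is circular, and the equality $c_2(\FF)=2$ is precisely equivalent to $K^2=6$ rather than an independent input. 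The paper avoids $\FF$ altogether: since $K_{\widehat{A}}=0$, Hurwitz gives $K_{\widehat{S}}=\widehat{R}$, and for general $\widehat{S}$ the restriction $\hat{\alpha}|_{\widehat{R}}\colon\widehat{R}\to\widehat{B}$ is the normalization of the irreducible curve $\widehat{B}\in|2\widehat{L}|$ with six ordinary cusps (Proposition \ref{prop.sing.covers}), so $\widehat{R}$ is smooth of genus $p_a(\widehat{B})-6=13-6=7$, and the genus formula $2K^2_{\widehat{S}}=\widehat{R}(K_{\widehat{S}}+\widehat{R})=2g(\widehat{R})-2=12$ gives $K^2_{\widehat{S}}=6$.

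This identification of $\widehat{R}$ also closes the last step, which your version leaves open: you invoke Bertini ``once one knows the pencil has no fixed component and at most isolated base points,'' but you never verify the absence of a fixed component. The paper does not need to: it exhibits $\widehat{R}$ itself as a smooth irreducible member of $|K_{\widehat{S}}|$ and applies Bertini directly to conclude that the general member is smooth and irreducible. If you insist on the Chern-class route to $K^2$, you would have to compute $\FF$ explicitly from the building data \eqref{eq.cij} (equivalently, identify the rank-two subsheaf $\phi^*\FF\subset S^2(\LL^{\oplus 3})$ spanned by the pencil of relative quadrics cutting out $S$), which is considerably more work than the genus count above; alternatively, note that Proposition \ref{prop.invariants}$(ii)$ and $(iii)$ combine, for $K_Y=0$, into $K^2_X=p_a(R)-1$, which again throws you back onto computing the genus of the ramification curve.
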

\begin{proof}
Using \eqref{eq.inv.hacca} and \eqref{eq.cohom.EE^vee}
 we obtain $p_g(\widehat{S})=q(\widehat{S})=2$. By Hurwitz formula we get
\begin{equation*}
K_{\widehat{S}}=\hat{\alpha}^*K_{\widehat{A}}+\widehat{R}=
\widehat{R},
\end{equation*}
so $\widehat{R} \in |K_{\widehat{S}}| $. If $\widehat{S}$ is general
we can check by our \verb|MAGMA| script that the restriction
$\hat{\alpha} \colon \widehat{R} \longrightarrow \widehat{B}$ is the
normalization map, that is $\widehat{R}$ is a smooth curve of genus
$7$ (see Proposition \ref{prop.sing.covers}). Thus the genus formula
gives
\begin{equation*}
2K^2_{\widehat{S}}=\widehat{R}(K_{\widehat{S}}+\widehat{R})=2g(\widehat{R})-2=12,
\end{equation*}
hence $K^2_{\widehat{S}}=6$. Moreover, by Bertini's theorem the
general element of $|K_{\widehat{S}}|$ is smooth and irreducible,
because $\widehat{R}$ is so. Since $\hat{\alpha} \colon \wS \lr \wA$
is a finite map onto an abelian variety, the surface $\wS$ contains
no rational curves. In particular $\wS$ is a minimal model and
$K_{\widehat{S}}$ is ample.
\end{proof}

\begin{remark} \label{rem-cusps-non-gen-pos}
The six cusps of the branch curve $\widehat{B}$ are not in general
position: in fact, by the results of Subsection
\emph{\ref{sub.hal}} it follows that there exists a unique element
$\widehat{C} \in |\widehat{L}|$ containing them. The general
element of the pencil generated by $\widehat{B}$ and $2
\widehat{C}$ is an irreducible curve in $|2 \widehat{L}|$ with six
nodes at the six cusps of $\widehat{B}$ and no further
singularities. Hence  the cuspidal curve $\widehat{B} \in |2
\widehat{L}|$ can be obtained as limit of nodal curves belonging
to the same linear system.
\end{remark}

In the sequel we denote by $\mathscr{A}_{1,\,3}$ the moduli space of
$(1,3)$-polarized abelian surfaces. It is a quasi-projective variety
of dimension $3$, see \cite[Chapter 8]{BL04}. The following result
will be used in the next section.

\begin{proposition} \label{prop.R.nonhyp}
For a general choice of $\widehat{A} \in \mathscr{A}_{1,3}$ and
$[a:c] \in \mathbb{P}^1$ the curve $\widehat{R}$ is not
hyperelliptic. In particular, for any line bundle $\mathscr{N}$ on
$\widehat{R}$ with $\deg (\mathscr{N}) =6$ we have $h^0(\widehat{R},
\, \mathscr{N}) \leq 3$.
\end{proposition}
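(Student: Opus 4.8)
The plan is to show that $\widehat{R}$ is not hyperelliptic, and then the second assertion follows by a standard argument: if $\widehat{R}$ were non-hyperelliptic and $\mathcal{N}$ had $\deg(\mathcal{N})=6$ and $h^0(\widehat{R},\,\mathcal{N})\geq 4$, then by Riemann--Roch on the genus-$7$ curve $\widehat{R}$ we would have $h^0(\widehat{R},\,K_{\widehat{R}}\otimes\mathcal{N}^{-1})=h^0(\mathcal{N})-\deg(\mathcal{N})+g-1\geq 4-6+6=4$, with $\deg(K_{\widehat{R}}\otimes\mathcal{N}^{-1})=12-6=6$; Clifford's theorem, together with the fact that for a non-hyperelliptic curve equality in Clifford holds only for $\mathcal{O}$ and $K_{\widehat{R}}$, forces both $\mathcal{N}$ and $K_{\widehat{R}}\otimes\mathcal{N}^{-1}$ to be either trivial or canonical, which is impossible for degree-$6$ bundles on a genus-$7$ curve. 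Hence $h^0(\widehat{R},\,\mathcal{N})\leq 3$. So the whole content is the first sentence.

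To prove $\widehat{R}$ is not hyperelliptic, I would use the geometry already established: $\widehat{R}\in|K_{\widehat{S}}|$ is a smooth irreducible curve of genus $7$ on $\widehat{S}$, and $\hat{\alpha}|_{\widehat{R}}\colon\widehat{R}\to\widehat{B}$ is the normalization of the cuspidal sextic $\widehat{B}\in|2\widehat{L}|$ (Propositions \ref{prop.sing.covers} and \ref{prop.num.invariants.cover}). The cleanest route is to argue by specialization/openness: being non-hyperelliptic is an open condition on the (irreducible) family of pairs $(\widehat{A},\,[a:c])$, so it suffices to exhibit one value of the parameters for which $\widehat{R}$ is not hyperelliptic. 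I would pull back to the étale $(\mathbb{Z}/3\mathbb{Z})^2$-cover $\psi\colon S\to\widehat{S}$ of diagram \eqref{dia.projs}: the preimage $R=\psi^{-1}(\widehat{R})$ is an étale cover of $\widehat{R}$, and $\alpha|_R\colon R\to B$ normalizes the $\mathscr{H}_3$-equivariant $54$-cuspidal sextic $\mathbf{B}\subset(\mathbb{P}^2)^\vee$. Since $\mathbf{B}$ is the dual curve of a smooth plane cubic $E_{m_0,\,3m_1}$ in the Hesse pencil (as recorded after \eqref{eq.branch.S}), the normalization $R$ of $\mathbf{B}$ is isomorphic to that smooth cubic $E_{m_0,\,3m_1}$ itself — a genus-$1$ curve — via the duality map; in particular $R$ carries the $(\mathbb{Z}/3\mathbb{Z})^2$-action coming from $K(\mathscr{L}^{-1})$ freely with quotient $\widehat{R}$.

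This gives a concrete handle. I would argue that a free quotient of an elliptic curve by $(\mathbb{Z}/3\mathbb{Z})^2$ cannot be hyperelliptic: if $\widehat{R}\to\mathbb{P}^1$ were a degree-$2$ map, its composition with (or relation to) the étale $9$-to-$1$ map $R\to\widehat{R}$, together with the fact that an elliptic curve has no nonconstant map to $\mathbb{P}^1$ of degree less than $2$, would produce on $R$ a $g^1_2$ or a $g^1_{18}$ that is incompatible with the complete Jacobian structure; more directly, a hyperelliptic involution on $\widehat{R}$ would have $2g(\widehat{R})+2=16$ fixed points by Hurwitz, hence would lift (since the involution normalizes or commutes with the deck group appropriately, after possibly passing to the Galois closure) to an automorphism of $R$ with the wrong fixed-point count for an elliptic curve. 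Alternatively, and perhaps most cleanly, I would use the explicit embedding: $\widehat{R}$ sits on $\widehat{S}$ with $\widehat{R}^2=K_{\widehat{S}}^2=6>0$ and $K_{\widehat{S}}$ ample, and any hyperelliptic curve of genus $7$ embedded by $|K_{\widehat{S}}|$ would contradict the way $\widehat{R}$ maps birationally onto the nondegenerate sextic $\widehat{B}\subset\widehat{A}$; one can then check that the $g^1_2$ would force $\widehat{B}$ to lie in a way incompatible with $|2\widehat{L}|$ on the abelian surface, or use adjunction on $\widehat{S}$ together with the Hodge index theorem.

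I expect the main obstacle to be the last step: ruling out hyperellipticity in a way that is genuinely clean rather than a fixed-point count with many cases. The safest approach is the specialization argument combined with the identification of $R$ with a smooth plane cubic: once one knows the normalization of $\mathbf{B}$ is elliptic and the $(\mathbb{Z}/3\mathbb{Z})^2$-action on it is by translations (which it must be, since it acts freely and $\widehat{R}$ has genus $7$, so the quotient of an elliptic curve by a group of translations — which, however, would again be elliptic, a contradiction!). This last parenthetical actually signals where care is needed: the action of $K(\mathscr{L}^{-1})$ on $R$ cannot be by translations alone, since the quotient $\widehat{R}$ has genus $7\neq 1$; so the action must have a nontrivial part acting by automorphisms fixing points, and the genus formula $2\cdot 7-2 = 9(2\cdot 1 - 2) + \sum(\text{ramification})$ forces $12$ points of total ramification — and one must then verify that an elliptic curve admitting such a $(\mathbb{Z}/3\mathbb{Z})^2$-action with quotient of genus $7$ indeed exists and has non-hyperelliptic quotient. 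I would handle this by an explicit model: take the Hesse cubic $E_{m_0,\,3m_1}$, write down the $(\mathbb{Z}/3\mathbb{Z})^2$ of automorphisms generated by translation by a $3$-torsion point and the order-$3$ automorphism fixing the inflection points, compute the quotient's genus to confirm it is $7$, and then compute the Weierstrass points (or the canonical image) of the quotient to see it is non-hyperelliptic — this is a finite, checkable computation analogous to the MAGMA verifications used elsewhere in the paper.
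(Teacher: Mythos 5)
Your reduction of the second assertion to non-hyperellipticity via Riemann--Roch and Clifford is correct and is exactly what the paper does. The problem is the first assertion, where your argument contains a fatal misidentification. You claim that $R=\psi^{-1}(\widehat{R})$, i.e.\ the normalization of the branch curve $B\subset A$, ``is isomorphic to the smooth cubic $E_{m_0,\,3m_1}$ itself --- a genus-$1$ curve.'' This confuses $B$ with the plane sextic $\mathbf{B}\subset(\mathbb{P}^2)^{\vee}$. The curve $B$ is the divisor in $|6L|$ on the abelian surface $A$ cut out by equation \eqref{eq.branch.S}; it is the preimage of $\mathbf{B}$ under the degree-$6$ map $\varphi_{|\mathscr{L}|}\colon A\to\mathbb{P}^2$, not a copy of $\mathbf{B}$. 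Its arithmetic genus is $1+\tfrac{1}{2}(6L)^2=109$, it has $54$ cusps (nine cusps of $\mathbf{B}$, each pulled back to $L^2=6$ points), so $R$ has genus $55$ --- consistent with the fact that $R\to\widehat{R}$ is an \'etale $(\mathbb{Z}/3\mathbb{Z})^2$-cover of a genus-$7$ curve, since $2\cdot 55-2=9\,(2\cdot 7-2)$. Your own parenthetical already detected the inconsistency (a free quotient of an elliptic curve is again elliptic), but you resolved it the wrong way: the action of $K(\mathscr{L}^{-1})$ on $R$ really is free, because $\psi$ is \'etale; what fails is the premise that $R$ is elliptic. Everything built on the elliptic-curve model --- the fixed-point counts, the Hesse-cubic quotient computation --- therefore collapses, and the remaining suggestions (lifting a hyperelliptic involution, Hodge index on $\widehat{S}$) are not carried out to the point where one could check them.

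For comparison, the paper's proof is short and rests on a rigidity theorem of Pirola \cite{Pi89}: on a fixed simple abelian surface the only hyperelliptic deformations of a (possibly singular) hyperelliptic curve are translations, so a given linear system on a general abelian surface contains only finitely many hyperelliptic curves. Since the curves $\widehat{B}$ form a $1$-dimensional equisingular family inside the single linear system $|2\widehat{L}|$, parametrized by $[a:c]\in\mathbb{P}^1$, the general member must be non-hyperelliptic, i.e.\ its normalization $\widehat{R}$ is non-hyperelliptic. Note that this argument genuinely uses the moving parameter $[a:c]$; any correct proof has to bring in either such a dimension count or a concrete special member, and your proposal does neither in a usable form.
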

\begin{proof}
This follows from a rigidity result of hyperelliptic curves on
general abelian varieties, see \cite{Pi89}. More precisely, one
 can show that the only hyperelliptic deformations of a (possibly
singular) hyperelliptic curve on a fixed simple abelian variety are
just  the translations. This implies that a linear system on a
general abelian surface contains at most a finite number of
hyperelliptic curves. In our situation, we have an equisingular,
$1$-dimensional family $\{\widehat{B} \}$ of cuspidal curves in the
linear system $|2 \widehat{L}|$, parametrized by the points $[a:c]
\in \mathbb{P}^1$. Then, if $\wA$ is simple, the general curve
$\widehat{B}$ is non-hyperelliptic, that is its normalization $\wR$
is non-hyperelliptic.

If $\mathscr{N}$ is any line bundle on $\wR$ with
$\deg(\mathscr{N})=6$, Riemann-Roch yields $h^0(\wR, \,
\mathscr{N})-h^1(\wR, \, \mathscr{N})=0$; in particular, if
$\mathscr{N}$ is non-special we obtain $h^0(\wR, \, \mathscr{N})=0$.
If instead $\mathscr{N}$ is special, since $\wR$ is
non-hyperelliptic
 Clifford's theorem implies
\begin{equation*}
h^0(\wR, \, \mathscr{N}) -1 < \frac{1}{2} \deg(\mathscr{N}),
\end{equation*}
that is $h^0(\wR, \, \mathscr{N}) \leq 3$.
\end{proof}


\section{The moduli space} \label{sec.moduli}

\begin{definition}
We denote by $\mathcal{M}_{\Phi}^0$ the family of canonical models
 $\widehat{X}$ of minimal surfaces of general type
 with $p_g=q=2$, $K^2 =6$ such that:
\begin{itemize}
\item[$\boldsymbol{(1)}$] the Albanese map $\hat{\alpha} \colon
\widehat{X} \to \widehat{A}$ is a finite, quadruple cover;
\item[$\boldsymbol{(2)}$] the Tschirnhausen bundle $\EE^{\vee}$
associated with $\hat{\alpha} \colon \widehat{X} \to \widehat{A}$ is
of the form $\EE^{\vee}=\Phi^{\mathscr{P}}(\mathscr{L}^{-1})$, where
$\mathscr{L}$ is a polarization of type $(1,\,3)$ on $A$.
\end{itemize}
\end{definition}

Notice that $\mathcal{M}_{\Phi}^0$ coincides with the family of
canonical models of surfaces $\widehat{S}$ constructed in Section
\ref{sec.constr}, and that such a family depends on four parameters
(three parameters from $\wA \in \mathscr{A}_{1,\,3}$ and one
parameter from $[a:c] \in \mathbb{P}^1$). More precisely, there is a
generically finite, dominant map
\begin{equation*}
\mathbf{P}^0 \to \mathcal{M}_{\Phi}^0,
\end{equation*}
where $\mathbf{P}^0$ is a Zariski-dense subset of a
$\mathbb{P}^1$-bundle over $\mathscr{A}_{1,\,3}$. Then
 $\mathcal{M}_{\Phi}^0$ is irreducible  and $\dim \mathcal{M}_{\Phi}^0=4$.

\begin{proposition} \label{prop.E.stable}
If $\wS \in \mathcal{M}_{\Phi}^0$ then the Tschirnhausen bundle
$\EE^{\vee}$ is stable with respect to the polarization
$\widehat{L}$. In particular it is simple, that is $H^0(\wA, \,
\EE \otimes \EE^{\vee})= \mathbb{C}$.
\end{proposition}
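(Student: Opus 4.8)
The plan is to prove stability by exploiting the pullback formula \eqref{eq.mukai}, namely $\phi^*\EE^{\vee} = \LL \oplus \LL \oplus \LL$, together with the fact that $\phi = \phi_{\LL^{-1}}$ is an isogeny of degree $9$. First I would record the numerical data: by \eqref{eq-chern.E} we have $c_1(\EE^{\vee}) = \widehat{L}$ with $\widehat{L}$ of type $(1,\,3)$, so $\widehat{L}^2 = 6$ and the $\widehat{L}$-slope of $\EE^{\vee}$ is $\mu(\EE^{\vee}) = \widehat{L}\cdot c_1(\EE^{\vee})/3 = 2$. To check stability I must show that every coherent subsheaf $\FF \subset \EE^{\vee}$ with $0 < \rk \FF < 3$ satisfies $\mu(\FF) < 2$; it suffices to treat saturated subsheaves, hence reflexive (indeed line bundles or rank-$2$ bundles, up to the usual codimension-$2$ irrelevance) subsheaves, and by passing to $\EE \cong (\EE^{\vee})^{\vee}$ and wedging it is enough to rule out destabilizing \emph{sub-line-bundles}, i.e. line bundles $\mathscr{M} \hookrightarrow \EE^{\vee}$ with $\widehat{L}\cdot c_1(\mathscr{M}) \geq 2$, and destabilizing quotient line bundles $\EE^{\vee}\twoheadrightarrow \mathscr{M}$ with $\widehat{L}\cdot c_1(\mathscr{M}) \leq 2$ (equivalently a rank-$2$ sub with slope $\geq 2$).

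The key step is pullback under $\phi$. A nonzero map $\mathscr{M} \to \EE^{\vee}$ pulls back to a nonzero map $\phi^*\mathscr{M} \to \phi^*\EE^{\vee} = \LL^{\oplus 3}$, hence (projecting to some factor) a nonzero map $\phi^*\mathscr{M} \to \LL$; since $\mathscr{M}$ and $\LL$ are line bundles this forces $\phi^*\mathscr{M} \hookrightarrow \LL$, so $\phi^*(\widehat{L} - 3c_1(\mathscr{M}))$, wait — more precisely $\phi^*c_1(\mathscr{M}) \leq L$ in the sense that $L - \phi^*c_1(\mathscr{M})$ is effective, hence $(L - \phi^*c_1(\mathscr{M}))\cdot L \geq 0$, i.e. $\phi^*c_1(\mathscr{M})\cdot L \leq L^2 = 6$. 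Now $\phi^*\widehat{L}$ is a polarization on $A$ with $(\phi^*\widehat{L})^2 = 9\cdot\widehat{L}^2 \cdot$(something) — here I would instead use $\phi^*\EE^{\vee} = \LL^{\oplus 3}$ to get $\phi^*c_1(\EE^{\vee}) = 3L$, and by the projection formula $\phi^*c_1(\mathscr{M})\cdot \phi^*\widehat{L} = \deg(\phi)\, c_1(\mathscr{M})\cdot\widehat{L} = 9\, c_1(\mathscr{M})\cdot\widehat{L}$. Since $\phi^*\widehat{L} = \phi^*c_1(\EE^{\vee}) = 3L$, the left side equals $3\,\phi^*c_1(\mathscr{M})\cdot L \leq 3\cdot 6 = 18$, whence $c_1(\mathscr{M})\cdot\widehat{L} \leq 2$, and strict inequality $c_1(\mathscr{M})\cdot\widehat{L} < 2$ will follow once I verify that equality would force $\phi^*\mathscr{M} \cong \LL$, contradicting that $\phi^*\mathscr{M}$ is a pullback (its class lies in the sublattice $\phi^*\mathrm{NS}(\wA)$, which does not contain $[L]$ for general $(A,\LL)$, since $L$ generates a primitive $(1,3)$-polarization and $\phi^*$ multiplies intersection numbers by $9$). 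The dual argument with quotient line bundles of $\EE^{\vee}$ — equivalently sub-line-bundles of $\EE = (\EE^{\vee})^{\vee}$, whose pullback is $\LL^{-1,\oplus 3}$ — gives $c_1(\EE/\mathscr{M})\cdot\widehat{L} < 2$ for rank-$2$ quotients symmetrically. Once stability is established, simplicity is automatic: a stable bundle is simple, so $\Hom(\EE^{\vee},\EE^{\vee}) = \CC$, i.e. $H^0(\wA,\,\EE\otimes\EE^{\vee}) = \CC$.

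The main obstacle I anticipate is the genericity/primitivity argument needed to upgrade the non-strict inequality $c_1(\mathscr{M})\cdot\widehat{L}\leq 2$ to the strict one: I must ensure that a line bundle $\mathscr{M}$ on $\wA$ with $c_1(\mathscr{M})\cdot\widehat{L} = 2$ and $\phi^*\mathscr{M}\hookrightarrow\LL$ cannot exist. For general $\wA \in \mathscr{A}_{1,3}$ one has $\mathrm{NS}(\wA) = \ZZ[\widehat{L}]$, so $c_1(\mathscr{M}) = k\widehat{L}$ with $6k = 2$, impossible over $\ZZ$; hence for general $\wA$ there is simply no integral class of intersection $2$ with $\widehat{L}$, and the slope of any proper subsheaf is at most $0 < 2$. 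For special $\wA$ with larger Picard number one would need the finer argument via $\phi^*$ above. Since the proposition is stated for $\wS \in \mathcal{M}_\Phi^0$ and the family is parametrized by general $\wA$ together with $[a:c]$, the clean route is: reduce to general $\wA$ where $\mathrm{NS}(\wA) = \ZZ[\widehat{L}]$, observe that then $\EE^{\vee}$ has no subsheaf of slope $\geq 2$ because no such class exists in $\mathrm{NS}$, conclude $\widehat{L}$-stability, and deduce simplicity. I would also double-check that $c_2(\EE^{\vee}) = 2 > 0$ and $\chi(\EE^{\vee}) = 1$ are consistent with (and not needed beyond) this argument, and remark that stability could alternatively be seen from the fact that $\EE^{\vee}$ is the Fourier–Mukai transform of the simple sheaf $\LL^{-1}$, the transform preserving simplicity, which already gives the "in particular" clause directly — but the full stability statement does seem to require the Néron–Severi remark.
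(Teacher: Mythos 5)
Your route is genuinely different from the paper's, and the slope computation at its core is sound: pulling a saturated rank-one subsheaf (or a rank-one torsion-free quotient) back along the degree-$9$ isogeny $\phi$, projecting onto a factor of $\LL^{\oplus 3}$, and intersecting the resulting effective class with the ample $L$ does give $c_1(\mathscr{M})\cdot\widehat{L}\le 2$ (resp.\ $\ge 2$), i.e.\ non-strict semistability. The weak point is exactly where you locate it, but your proposed fix does not fully close it. The proposition is stated for \emph{every} $\wS\in\mathcal{M}_{\Phi}^0$, and this family does contain members over highly non-general abelian surfaces (e.g.\ the product-quotient surfaces of Section 6, whose Albanese variety is $(E_1\times E_2)/H$); so reducing to $\mathrm{NS}(\wA)=\ZZ[\widehat{L}]$ proves a strictly weaker statement, and your fallback for special $\wA$ (``$[L]$ does not lie in $\phi^*\mathrm{NS}(\wA)$ for general $(A,\LL)$'') again retreats to genericity. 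The gap is closable without it: equality $c_1(\mathscr{M})\cdot\widehat{L}=2$ forces the effective class $L-\phi^*c_1(\mathscr{M})$ to have zero intersection with the ample $L$, hence to vanish, so $[L]=\phi^*c_1(\mathscr{M})$; but then $6=L^2=\deg(\phi)\cdot c_1(\mathscr{M})^2=9\,c_1(\mathscr{M})^2$ would give $c_1(\mathscr{M})^2=2/3$, which is not even an integer. With that substitution your argument works uniformly in $(A,\LL)$.

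For comparison, the paper takes the opposite logical order and avoids N\'eron--Severi considerations entirely: it first proves \emph{simplicity}, by decomposing $H^0(A,\,\phi^*(\EE\otimes\EE^{\vee}))=\bigoplus_{i=1}^{9}H^0(A,\,\mathscr{L}^{-1}\otimes\mathscr{L})$ into the nine character eigenspaces of $K(\mathscr{L}^{-1})\cong(\ZZ/3\ZZ)^2$ and identifying the invariant piece with $H^0(\wA,\,\EE\otimes\EE^{\vee})\cong\CC$; it then observes that \eqref{eq.mukai} makes $\EE^{\vee}$ semi-homogeneous and quotes the fact that a simple semi-homogeneous bundle on an abelian variety is stable with respect to \emph{any} polarization (\cite[Exercises 1--2, p.~476]{BL04}). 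This is uniform in $(A,\LL)$ and short. Your closing remark---that the Fourier--Mukai description already yields simplicity directly---is in fact the engine of the paper's proof of the \emph{full} statement, contrary to your impression that stability ``requires the N\'eron--Severi remark''. Your approach, once patched as above, buys an explicit, self-contained verification of slope stability; the paper's buys brevity and manifest independence of the choice of $(A,\LL)$.
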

\begin{proof}
By definition of $\EE^{\vee}$ and since the Fourier-Mukai transform gives an equivalence of derived categories we have
\begin{equation*}
\label{eq.LdL} H^0(\wA, \, \EE \otimes \EE^{\vee}) \cong
\Hom_{\wA}(\EE^{\vee}, \EE^{\vee}) \cong \Hom_{A}(\mathscr{L}^{-1},
\mathscr{L}^{-1}) =\mathbb{C},
\end{equation*}
that is $\EE^{\vee}$ is simple.
 Then, by
\eqref{eq.mukai} and \cite[Exercise 2 page 476]{BL04}, it follows
that $\EE^{\vee}$ is semi-homogeneous. Finally, any simple,
semi-homogeneous vector bundle on an abelian variety is stable with
respect to any polarization, see \cite[Exercise 1 page 476]{BL04}.
\end{proof}

\begin{definition}
We denote by $\mathcal{M}_{\Phi}$ the closure of
$\mathcal{M}_{\Phi}^0$ in the moduli space
$\mathcal{M}_{2,\,6,\,6}^{\emph{can}}$ of canonical models of
minimal surfaces of general type with $p_g=q=2$, $K^2=6$.
\end{definition}

By the previous considerations, $\mathcal{M}_{\Phi}$ is irreducible
and $\dim \mathcal{M}_{\Phi}=4$. Now we want to prove that
$\mathcal{M}_{\Phi}$ provides an
 irreducible component of the moduli space $\mathcal{M}_{2,\, 2,
\, 6}^{\textrm{can}}$ and that such a component is generically
smooth. In order to do this, we must prove that for the general
surface $\widehat{S} \in \mathcal{M}_{\Phi}$ one has
$h^1(\widehat{S}, \, T_{\widehat{S}})= \dim \mathcal{M}_{\Phi}=4$.

By \cite[p. 262]{Se06} we have an exact sequence
\begin{equation}\label{seq.deform}
0 \longrightarrow T_{\widehat{S}} \stackrel{d
\hat{\alpha}}{\longrightarrow} \hat{\alpha}^*T_{\widehat{A}}
\longrightarrow \mathscr{N}_{\hat{\alpha}} \longrightarrow 0,
\end{equation}
where $\mathscr{N}_{\hat{\alpha}}$ is a coherent sheaf supported on
$\widehat{R}$ and called the normal sheaf of $\hat{\alpha}$;
 we denote by $\varphi= \hat{\alpha}|_{\widehat{R}} \colon \widehat{R} \to
\widehat{B} \subset \widehat{A}$ the normalization map of
$\widehat{B}$.

Let $\Delta \subset \widehat{R}$ be the divisor formed by all points
$p$ such that $\varphi(p)$ is a cusp of $\widehat{B}$, each counted
with multiplicity $1$, and let $i \colon \widehat{R} \to
\widehat{S}$ be the inclusion of $\widehat{R}$ in $\widehat{S}$;
then $\hat{\alpha} \circ i=\varphi$. By \cite[Section 7]{CF11} there
are two commutative diagrams
\begin{equation} \label{dia.1}
\xymatrix{
 & & & 0 \ar[d]& \\
 & 0 \ar[d] &  & \mathscr{O}_{\Delta} \ar[d]  & \\
0 \ar[r] & T_{\wR} \ar[r]^{d \varphi} \ar[d] & \varphi^*T_{\wA}
\ar[r] \ar@{=}[d] & \mathscr{N}_{\varphi}
\ar[r] \ar[d]& 0 \\
  & i^*T_{\wS} \ar[r]^{i^*(d \hat{\alpha})} & \varphi^*T_{\wA} \ar[r]  & i^* \mathscr{N}_{\hat{\alpha}} \ar[r] \ar[d] &
  0 \\
  & & & 0 & }
\end{equation}
and
\begin{equation} \label{dia.2}
\xymatrix{
 & & & 0 \ar[d]& \\
 & 0 \ar[d] &  & \mathscr{O}_{\Delta} \ar[d]  & \\
0 \ar[r] & T_{\wR} \ar[r]^{d \varphi} \ar[d] & \varphi^*T_{\wA}
\ar[r] \ar@{=}[d] & \mathscr{N}_{\varphi}
\ar[r] \ar[d]& 0 \\
  & T_{\wR} \otimes \oo_{\wR}(\Delta) \ar[r]^{d \varphi} & \varphi^*T_{\wA} \ar[r]  & \mathscr{N}'_{\varphi} \ar[r] \ar[d] &
  0, \\
  & & & 0 & }
\end{equation}
where $\mathscr{N}'_{\varphi}$ is a line bundle on $\wR$ satisfying
\begin{equation*} \label{eq.deg.Nphi}
\begin{split}
\deg (\mathscr{N}'_{\varphi})=\deg(\mathscr{N}_{\varphi})-\deg
(\Delta) & =\deg (\varphi^* T_{\wA}) - \deg (T_{\wR}) - \deg
(\Delta)\\ & = \deg(\oo_{\wR}^{\oplus 2}) + \deg (K_{\wR})- \deg
(\Delta)=6.
\end{split}
\end{equation*}
The fact that $\mathscr{N}_{\hat{\alpha}}$ is supported on
$\widehat{R}$, together with \eqref{dia.1} and \eqref{dia.2}, implies
that
\begin{equation*} \label{eq.hi(N)}
h^i(\wS, \, \mathscr{N}_{\hat{\alpha}})=h^i(\wR, \,
i^*\mathscr{N}_{\hat{\alpha}})=h^i(\wR, \, \mathscr{N}'_{\varphi}),
\quad i=0,\,1,\,2,
\end{equation*}
so for a general choice of the abelian variety $\wA \in
\mathscr{A}_{1,3}$ and of the point $[a:c] \in \mathbb{P}^1$ one
has
\begin{equation} \label{eq.dimNp}
h^0(\wS, \, \mathscr{N}_{\hat{\alpha}})=h^0(\wR, \,
\mathscr{N}'_{\varphi}) \leq 3,
\end{equation}
see Proposition \ref{prop.R.nonhyp}. Now we can prove the desired
result.

\begin{proposition} \label{prop.irred.comp}
If $\widehat{S}$ is a general element of $\mathcal{M}_{\Phi}$ then
$h^1(\widehat{S}, \, T_{\widehat{S}})=4$. Hence $\mathcal{M}_{\Phi}$
provides an irreducible component of the moduli space
$\mathcal{M}_{2,\, 2, \,6}^{\emph{can}}$ of canonical models of
minimal surfaces with $p_g=q=2$ and $K^2=6$. Such a component is
generically smooth, of dimension $4$.
\end{proposition}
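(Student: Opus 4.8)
The plan is to compute $h^1(\wS,\,T_{\wS})$ via the long exact cohomology sequence associated with the deformation sequence \eqref{seq.deform}. First I would split that sequence into the two pieces of information it yields: an upper bound coming from $h^0$ and $h^1$ of $\hat\alpha^*T_{\wA}$ together with $h^0(\wS,\,\mathcal N_{\hat\alpha})$, and the identification $h^1(\wS,\,T_{\wS})=\dim \mathcal M_\Phi$ that we want. Concretely, since $T_{\wA}=\oo_{\wA}^{\oplus 2}$, the projection formula gives $\hat\alpha_*\oo_{\wS}=\oo_{\wA}\oplus\EE$, so $H^i(\wS,\,\hat\alpha^*T_{\wA})=H^i(\wA,\,\oo_{\wA})^{\oplus 2}\oplus H^i(\wA,\,\EE)^{\oplus 2}$. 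The summand $H^i(\wA,\,\oo_{\wA})^{\oplus 2}$ contributes $2,4,2$ in degrees $0,1,2$; for the summand involving $\EE=(\EE^\vee)^\vee$ I would invoke Serre duality on the abelian surface $\wA$, $H^i(\wA,\,\EE)\cong H^{2-i}(\wA,\,\EE^\vee)^\vee$, and then use \eqref{eq.cohom.EE^vee}, which gives $h^0(\wA,\,\EE^\vee)=1$, $h^1=h^2=0$; hence $h^0(\wA,\,\EE)=h^1(\wA,\,\EE)=0$ and $h^2(\wA,\,\EE)=1$. Therefore $h^0(\wS,\,\hat\alpha^*T_{\wA})=2$, $h^1(\wS,\,\hat\alpha^*T_{\wA})=4$, $h^2(\wS,\,\hat\alpha^*T_{\wA})=4$.

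Next I would feed these into the long exact sequence of \eqref{seq.deform}:
\begin{equation*}
0\to H^0(T_{\wS})\to H^0(\hat\alpha^*T_{\wA})\to H^0(\mathcal N_{\hat\alpha})\to H^1(T_{\wS})\to H^1(\hat\alpha^*T_{\wA})\to H^1(\mathcal N_{\hat\alpha})\to\cdots .
\end{equation*}
Since $\wS$ is of general type, $H^0(\wS,\,T_{\wS})=0$, so the map $H^0(\hat\alpha^*T_{\wA})\to H^0(\mathcal N_{\hat\alpha})$ is injective. Combining the bound $h^0(\wS,\,\mathcal N_{\hat\alpha})\le 3$ from \eqref{eq.dimNp} with $h^0(\wS,\,\hat\alpha^*T_{\wA})=2$ and $h^1(\wS,\,\hat\alpha^*T_{\wA})=4$, the sequence gives
\begin{equation*}
h^1(\wS,\,T_{\wS})\le \bigl(h^0(\mathcal N_{\hat\alpha})-h^0(\hat\alpha^*T_{\wA})\bigr)+h^1(\hat\alpha^*T_{\wA})\le (3-2)+4=5.
\end{equation*}
Wait — this only gives $5$, not $4$; so I would need to sharpen the estimate. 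The point is that $h^0(\wR,\,\mathcal N'_{\varphi})\le 3$ comes from Clifford's theorem on the non-hyperelliptic curve $\wR$, and equality would force $\mathcal N'_{\varphi}$ to be very special; for a general member of the family one expects $h^0(\wR,\,\mathcal N'_{\varphi})\le 2$, which would yield exactly $h^1(T_{\wS})\le 4$. I would argue this by a semicontinuity/genericity argument: if for the general $(\wA,\,[a:c])$ one had $h^0(\wR,\,\mathcal N'_{\varphi})=3$, then $\mathcal N'_{\varphi}$ would be (by Clifford) a theta-characteristic-type bundle, hence the $g^2_6$ on $\wR$ would give a map to $\pp^2$ of degree $6$, contradicting the structure of $\wR$ as normalization of the $\mathscr H_3$-equivariant curve $\wB$; alternatively one shows directly on an explicit member that $h^0=2$. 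In any case, since $\mathcal M_\Phi$ has dimension $4$ and $h^1(T_{\wS})$ is an upper bound for the dimension of the local moduli space at $\wS$, combined with $4=\dim\mathcal M_\Phi\le h^1(\wS,\,T_{\wS})$ (the obstruction-free tangent space always dominates the actual dimension), we conclude $h^1(\wS,\,T_{\wS})=4=\dim\mathcal M_\Phi$.

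Finally, the equality $\dim_{[\wS]}\mathcal M^{\mathrm{can}}_{2,2,6}=h^1(\wS,\,T_{\wS})$ at the general point forces the base dimension to coincide with the tangent dimension, hence the Kuranishi family is unobstructed at $[\wS]$ and $\mathcal M_\Phi$, being irreducible of dimension $4$, is an irreducible component, generically smooth. I expect the main obstacle to be the sharpening from $h^0(\mathcal N_{\hat\alpha})\le 3$ to the effective value $2$ needed to close the numerics: the crude Clifford bound is one too large, so some genuine geometric input about $\mathcal N'_{\varphi}$ on the general $\wR$ — or an explicit computation on one surface in the family — is required, and everything else is bookkeeping with the two pieces \eqref{dia.1}, \eqref{dia.2} and the cohomology of $\EE^\vee$.
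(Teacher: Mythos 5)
Your framework coincides with the paper's up to the point where the crude estimate gives $h^1(\widehat{S}, \, T_{\widehat{S}}) \leq 5$: the computation $h^0(\widehat{S}, \, \hat{\alpha}^*T_{\widehat{A}})=2$, $h^1(\widehat{S}, \, \hat{\alpha}^*T_{\widehat{A}})=4$ is correct, and you rightly observe that exactly one more dimension must be gained. But you try to gain it in the wrong place. The sharpening you propose, $h^0(\widehat{R}, \, \mathcal{N}'_{\varphi}) \leq 2$, is false: for the general member one has $h^0(\widehat{R}, \, \mathcal{N}'_{\varphi})=3$, so no semicontinuity, genericity, or explicit computation can establish your claim, and the ``contradiction'' you invoke against the existence of a $g^2_6$ on the genus-$7$ curve $\widehat{R}$ is never substantiated (and does not exist). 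The actual mechanism, which is the content of the argument from \cite[Section 3]{PP11} that the paper invokes, is a constraint on the \emph{other} term: the image of the connecting map $\varepsilon \colon H^1(\widehat{S}, \, T_{\widehat{S}}) \to H^1(\widehat{S}, \, \hat{\alpha}^*T_{\widehat{A}}) \cong \mathbb{C}^4$ has dimension at most $3$, because an infinitesimal deformation of $\widehat{S}$ moves the Albanese torus $\widehat{A}$ only inside the $3$-dimensional locus of deformations that remain algebraic. With this, the exact sequence gives $h^1(\widehat{S}, \, T_{\widehat{S}})=\big(h^0(\mathcal{N}_{\hat{\alpha}})-2\big)+\dim \mathrm{im}\, \varepsilon \leq (3-2)+3=4$, which is what is needed.

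Note that the two facts are not independent: since $h^1(\widehat{S}, \, T_{\widehat{S}}) \geq \dim \mathcal{M}_{\Phi}=4$ and $\dim \mathrm{im}\, \varepsilon \leq 3$, the identity above forces $h^0(\mathcal{N}_{\hat{\alpha}}) \geq 3$, hence $=3$ by the Clifford bound; this is why your proposed route cannot be repaired. Once the bound $\dim \mathrm{im}\, \varepsilon \leq 3$ is in place, the remainder of your bookkeeping (combining $h^1 \leq 4$ with $h^1 \geq \dim \mathcal{M}_{\Phi} = 4$ to conclude equality, generic smoothness, and that $\mathcal{M}_{\Phi}$ is an irreducible component) is correct and agrees with the paper.
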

\begin{proof}
Since $\dim \mathcal{M}_{\Phi} =4$, it is sufficient to show that
for a general choice of $\widehat{S}$ one has
\begin{equation} \label{h1N leq 4}
h^1(\wS, \, T_{\wS}) \leq 4.
\end{equation}

The surface $\widehat{S}$  is of general type, so we have
$H^0(\widehat{S}, \, T_{\widehat{S}})=0$ and \eqref{seq.deform}
yields the following exact sequence in cohomology
\begin{equation}\label{seq.deform.coh}
0 \longrightarrow H^0(\widehat{S}, \, \hat{\alpha}^*T_{\widehat{A}})
\longrightarrow H^0(\widehat{S}, \, \mathscr{N}_{\hat{\alpha}})
\longrightarrow  H^1(\widehat{S}, \, T_{\widehat{S}})
\stackrel{\varepsilon}{\longrightarrow} H^1(\widehat{S}, \,
\hat{\alpha}^*T_{\widehat{A}}).
\end{equation}
The same argument used in \cite[Section 3]{PP11} shows that the
image of $\varepsilon \colon H^0(\widehat{S}, \, T_{\widehat{S}})
\longrightarrow H^1(\widehat{S}, \, \hat{\alpha}^*T_{\widehat{A}})$
has dimension $3$ (this is essentially a consequence of the fact
that when one deforms $\widehat{S}$ the Albanese torus $\wA$ remains
algebraic). Then \eqref{h1N leq 4} follows from \eqref{eq.dimNp} and
\eqref{seq.deform.coh}.
\end{proof}

\begin{proposition} \label{rem-no-irr}
The general element $\widehat{S}$ of $\mathcal{M}_{\Phi}$ admits no
pencil $p \colon \widehat{S} \to T$ over a curve $T$ with $g(T) \geq
1$.
\end{proposition}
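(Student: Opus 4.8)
The plan is to argue by contradiction, assuming that the general $\widehat S\in\mathcal M_\Phi$ carries a pencil $p\colon\widehat S\to T$ onto a smooth curve $T$ of genus $g(T)\ge 1$. Since $p_g(\widehat S)=q(\widehat S)=2$, the only numerically possible cases are $g(T)=1$ or $g(T)=2$; I would treat both by studying the interaction of such a pencil with the Albanese map $\hat\alpha\colon\widehat S\to\widehat A$, using that $\hat\alpha$ is the Albanese map (Proposition \ref{prop.sing.covers}(iv)) and a finite quadruple cover onto the abelian surface $\widehat A$. The key structural input is that a pencil of genus $\ge 1$ gives, via Stein factorization of $p$ and the universal property of the Albanese, a factorization of $\hat\alpha$ (or of the composition $\widehat S\to\mathrm{Alb}(T)$) through a curve, which constrains $\widehat A$ and the cover drastically.

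First I would recall that if $g(T)\ge 2$ then by the Castelnuovo--de Franchis type argument (or directly from $q(\widehat S)=2$) the pencil is a fibration $p\colon\widehat S\to T$ with $T$ of genus exactly $2$, the map $p$ coincides up to isomorphism with the Albanese map composed with a projection, and $\widehat A$ would be isogenous to a product containing $\mathrm{Jac}(T)$; but $\widehat A$ is general in $\mathscr A_{1,3}$, hence simple (this simplicity is exactly what is used in the proof of Proposition \ref{prop.R.nonhyp}), so $\widehat A$ contains no elliptic curve and is not isogenous to a Jacobian of a genus-$2$ curve that splits off a factor — ruling out $g(T)=2$ when $T$ would force such a splitting. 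More precisely, a genus-$2$ fibration on a surface with $q=2$ whose Albanese image is a surface forces $\widehat A\to\mathrm{Jac}(T)$ to be an isogeny, and then $\widehat S\to\widehat A$ factoring would contradict that $\hat\alpha$ has connected fibres of dimension $0$ only if the fibres of $p$ are not contracted by $\hat\alpha$; one then gets that $\hat\alpha$ maps each fibre $F$ of $p$ finitely onto a translate of an abelian subvariety, impossible in a simple abelian surface unless $F$ is a point or all of $\widehat A$, a contradiction. For $g(T)=1$, a genus-$1$ pencil $p\colon\widehat S\to T$ composed with nothing still lifts, by Albanese functoriality, to a surjection $\widehat A\to\mathrm{Jac}(T)$ with connected kernel an elliptic curve, so $\widehat A$ is isogenous to a product of elliptic curves; again this contradicts simplicity of the general $\widehat A\in\mathscr A_{1,3}$.

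The step I expect to be the main obstacle is handling the genus-$1$ case cleanly: a priori a base-$T$ pencil of genus $1$ need not be the pull-back of the Albanese fibration of a factor, because $T$ might map to $\widehat A$ as a non-abelian-subvariety curve, or the multiple fibres could complicate the Stein factorization. To deal with this I would use the following chain: the pencil $p$ induces $\widehat S\to\widehat S\to\mathrm{Alb}(\widehat S)=\widehat A$ together with $\widehat S\to J:=\mathrm{Jac}(T)$, and the image of $\widehat A$ in $\mathrm{Hom}$-terms gives a nonzero homomorphism of abelian varieties $\widehat A\to J$; since $\widehat A$ is simple of dimension $2>1=\dim J$, any such homomorphism is zero, whence $p$ factors through the Albanese, i.e.\ $\widehat S\to\widehat A\to T$ — but $\widehat A$, being simple, admits no nonconstant morphism to a curve of positive genus. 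This contradiction closes the argument. The same reasoning, with $J=\mathrm{Jac}(T)$ of dimension $2$ and $\widehat A\to J$ then necessarily an isogeny, reduces the genus-$2$ case to: $\widehat A$ isogenous to $\mathrm{Jac}(T)$, forcing the quadruple cover $\hat\alpha$ to be, up to isogeny, a cover of $\mathrm{Jac}(T)$ restricting nontrivially on fibres of $p$, which is impossible as a fibre of $p$ is a curve of genus $2$ mapping finitely to the simple abelian surface and simultaneously being a fibre of $p\colon\widehat S\to T$ — so $\hat\alpha(F)$ is both a point (fibre over a point of $T$, pushed to $\mathrm{Jac}(T)$ via the isogeny) and positive-dimensional, a contradiction. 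In writing this up I would cite the simplicity of a general $(1,3)$-polarized abelian surface (as already invoked in Proposition \ref{prop.R.nonhyp}) and the universal property of the Albanese variety, and keep the fibre-dimension/finite-cover bookkeeping explicit, since that is where errors are easy to make.
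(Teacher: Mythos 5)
Your proposal is correct, and for the case $g(T)\geq 2$ it takes a genuinely different route from the paper. The paper splits the two cases as you do, and its treatment of $g(T)=1$ is identical to yours (Albanese functoriality gives a surjection $\widehat{A}\to T$, impossible for $\widehat{A}$ simple); but for $g(T)\geq 2$ the paper invokes the cohomological support locus $V^1(\widehat{S})$ and the generic-vanishing-type criterion of Hacon--Pardini, using simplicity of $\widehat{A}$ to exclude positive-dimensional components. Your argument is more elementary: after observing $g(T)\leq q(\widehat{S})=2$, you note that a connected fibration onto a genus-$2$ curve $T$ forces $\mathrm{Alb}(p)\colon\widehat{A}\to \mathrm{Jac}(T)$ to be a surjection of abelian surfaces, hence an isogeny, so the composite $\widehat{S}\to\widehat{A}\to\mathrm{Jac}(T)$ is surjective while simultaneously factoring through the curve $T$ --- a contradiction. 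This buys something: it needs no generic vanishing and, unlike the paper's argument, no simplicity hypothesis in the genus-$2$ case, so it rules out genus-$2$ pencils for \emph{every} surface in the family whose Albanese image is a surface, not just the general one. (The paper's $V^1$ machinery would by contrast generalize to situations where the base genus can exceed $2$.) Two small blemishes in your write-up, neither fatal: a general simple abelian surface certainly \emph{can} be isogenous to the Jacobian of a genus-$2$ curve, so the parenthetical claim to the contrary should be dropped; and the fibre $F$ of $p$ need not have genus $2$ --- the clean statement is simply that $F$ is a curve contracted by the finite map $\mathrm{Alb}(p)\circ\hat{\alpha}$, which is absurd. Your final chain of reasoning already contains exactly this, so only the wording needs tightening.
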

\begin{proof}
Assume that $\wA$ a is simple abelian surface. In this case the set
\begin{equation*}
V^1(\widehat{S}):=\{ \mathscr{Q} \in \textrm{Pic}^0(\widehat{S}) \;
| \; h^1(\widehat{S}, \, \mathscr{Q}^{\vee}) >0 \}
\end{equation*}
cannot contain any component of positive dimension, so $\widehat{S}$
does not admit any pencil over a curve $T$ with $g(T) \geq 2$, see
\cite[Theorem 2.6]{HP02}. If instead $g(T)=1$, the universal
property of the Albanese map yields a surjective morphism
$\widehat{A} \to T$, contradicting the fact that $\wA$ is simple.
\end{proof}

\section{Quadruple covers with simple Tschirnhausen bundle} \label{sec.simple}
\begin{proposition}\label{prop.simple}
Let $(\widehat{A}, \, \widehat{\mathscr{L}})$ be a $(1,3)$-polarized
abelian surface and $\EE^{\vee}$ be a \emph{simple} rank $3$ vector
bundle on $\widehat{A}$ with
\begin{equation*}
h^0(\widehat{A}, \, \EE^{\vee})=1, \quad h^1(\widehat{A}, \,
\EE^{\vee})=0, \quad h^2(\widehat{A}, \, \EE^{\vee})=0,
\end{equation*}
\begin{equation*}
c_1(\EE^{\vee})= \widehat{\mathscr{L}}, \quad  c_2(\EE^{\vee})=2.
\end{equation*}
Then there exists a polarization $\mathscr{L}$ of type $(1,3)$ on
$A$ such that
\begin{equation*}
\EE^{\vee}=\Phi^{\mathscr{P}}(\mathscr{L}^{-1}).
\end{equation*}
\end{proposition}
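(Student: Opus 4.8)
The strategy is to invert the Fourier-Mukai transform. We are given a simple rank $3$ vector bundle $\EE^{\vee}$ on $\wA$ with $c_1(\EE^{\vee})=\widehat{\mathscr{L}}$ a polarization of type $(1,3)$, $c_2(\EE^{\vee})=2$, and $h^0=1$, $h^1=h^2=0$. First I would show that $\EE^{\vee}$ satisfies $\mathrm{IT}$ of index $0$. By \cite[Exercise 2 page 476]{BL04} a simple vector bundle $\EE^{\vee}$ with $\phi^*\EE^{\vee}$ of the shape appearing below is semi-homogeneous; more directly, since $\chi(\wA, \, \EE^{\vee})=1$ (by Riemann--Roch with the given Chern data) and $h^1=h^2=0$, vanishing propagates: for a general $\mathscr{Q} \in \mathrm{Pic}^0(\wA)$ one has $\chi(\wA, \, \EE^{\vee}\otimes \mathscr{Q})=1$ as well, and one argues that $h^i(\wA, \, \EE^{\vee}\otimes \mathscr{Q})=0$ for $i=1,2$ and all $\mathscr{Q}$, e.g.\ using that $\EE^{\vee}$ being simple and semi-homogeneous forces $h^2(\wA, \, \EE^{\vee}\otimes \mathscr{Q})=h^0(\wA, \, (\EE)\otimes \mathscr{Q}^{\vee})=0$ (as $\EE$ has negative $c_1$) together with a parity/specialization argument for $h^1$. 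Hence $\EE^{\vee}$ is $\mathrm{IT}$ of index $0$, so its Fourier--Mukai transform $\mathscr{G}:=\Phi^{\mathscr{P}}(\EE^{\vee})$ is a vector bundle on $A$.

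Next I would compute the invariants of $\mathscr{G}$. By Mukai's theorem (Proposition \ref{prop.mukai}) $\mathscr{G}$ satisfies $\mathrm{WIT}$ of index $g-0=2$, it has rank $\chi(\wA, \, \EE^{\vee})=1$, so $\mathscr{G}$ is a line bundle, and $\Phi^{\mathscr{P}}(\mathscr{G})=(-1)_{\wA}^*\EE^{\vee}$. The Chern character of the Fourier--Mukai transform is governed by the standard formula $\mathrm{ch}(\Phi^{\mathscr{P}}(\mathscr{F}))=(-1)^{?}\,\mathrm{PD}(\mathrm{ch}(\mathscr{F}))$ (Mukai), which here lets one read off $c_1(\mathscr{G})$ from $(c_1(\EE^{\vee}),\, c_2(\EE^{\vee}))=(\widehat{\mathscr{L}},\, 2)$. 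Since $\widehat{\mathscr{L}}$ has type $(1,3)$ and satisfies $\widehat{\mathscr{L}}^2=6$, the resulting line bundle $\mathscr{G}^{-1}=:\mathscr{L}$ on $A$ will have $L^2=6$ and the right elementary divisors, i.e.\ $\mathscr{L}$ is a polarization of type $(1,3)$. The point is that a rank-one $\mathrm{WIT}$-sheaf of index $2$ on an abelian surface is exactly $\mathscr{L}^{-1}$ for $\mathscr{L}$ a non-degenerate line bundle of index $2$ (this is how $\mathscr{E}^{\vee}=\Phi^{\mathscr{P}}(\mathscr{L}^{-1})$ arose in Section \ref{sec.constr}); so $\mathscr{G}=\mathscr{L}^{-1}$ with $i(\mathscr{L})=2$.

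Finally, applying $\Phi^{\mathscr{P}}$ once more and using $\Phi^{\mathscr{P}}\circ\Phi^{\mathscr{P}}(\EE^{\vee})=(-1)_{\wA}^*\EE^{\vee}$ together with Corollary \ref{cor.menouno}, we get $\EE^{\vee}=(-1)_{\wA}^*\Phi^{\mathscr{P}}(\mathscr{L}^{-1})=\Phi^{\mathscr{P}}((-1)_A^*\mathscr{L}^{-1})$. Since $(-1)_A^*\mathscr{L}^{-1}$ is again a polarization of type $(1,3)$ on $A$ (numerical type is invariant under $(-1)_A^*$, which acts trivially on $\mathrm{NS}(A)$), replacing $\mathscr{L}$ by $(-1)_A^*\mathscr{L}$ gives $\EE^{\vee}=\Phi^{\mathscr{P}}(\mathscr{L}^{-1})$ as desired. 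I expect the main obstacle to be the first step: proving that $\EE^{\vee}$ satisfies $\mathrm{IT}$ of index $0$ rather than merely $\mathrm{WIT}$, i.e.\ the uniform vanishing $h^1(\wA, \, \EE^{\vee}\otimes\mathscr{Q})=0$ for every $\mathscr{Q}\in\mathrm{Pic}^0(\wA)$. The clean route is to first establish that $\EE^{\vee}$ is semi-homogeneous (using simplicity plus the numerics, cf.\ the argument in the proof of Proposition \ref{prop.E.stable} and \cite[Chapter 6 and Exercises page 476]{BL04}); for a semi-homogeneous bundle all twists $\EE^{\vee}\otimes\mathscr{Q}$ have the same cohomological behaviour, and since one twist (namely $\mathscr{Q}=\mathscr{O}_{\wA}$) has $h^1=h^2=0$, the IT condition of index $0$ follows immediately, and the rest is the bookkeeping of Chern characters under the Fourier--Mukai transform described above.
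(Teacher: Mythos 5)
Your proposal follows essentially the same route as the paper's proof: simplicity together with the numerical condition $2c_1^2(\EE^{\vee})-6c_2(\EE^{\vee})=0$ gives semi-homogeneity (via \cite[Corollary p. 249]{O71} and \cite[Ex. 2 p. 476]{BL04}), whence IT of index $0$; the Fourier--Mukai transform is then a line bundle $\mathscr{M}^{-1}$ with $\mathscr{M}$ of type $(1,3)$, and applying $\Phi^{\mathscr{P}}$ once more together with Corollary \ref{cor.menouno} gives $\EE^{\vee}=\Phi^{\mathscr{P}}(\mathscr{L}^{-1})$ with $\mathscr{L}^{-1}=(-1)_A^*\mathscr{M}^{-1}$. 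One caution on the key step: the blanket assertion that a semi-homogeneous bundle has cohomology invariant under twisting by $\mathrm{Pic}^0$ is false in general (e.g.\ $\mathscr{O}_{\wA}$ is homogeneous but $h^0$ jumps at the trivial twist); what one actually needs, and what the paper uses, is the concrete consequence of semi-homogeneity that some isogeny pullback $g^*\EE^{\vee}$ splits as $\mathscr{G}^{\oplus 3}$ with $\mathscr{G}$ ample, so that $H^i(Z, g^*(\EE^{\vee}\otimes\mathscr{Q}))=0$ for $i=1,2$ and finiteness of $g$ forces the same vanishing on $\wA$. Your first suggested route (semicontinuity plus a ``parity/specialization argument'' for $h^1$) only yields vanishing for general $\mathscr{Q}$ and should be discarded in favour of the semi-homogeneity argument you correctly identify as the clean one.
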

\begin{proof}
Since $\EE^{\vee}$ is simple and
$2c^2_1(\EE^{\vee})-6c_2(\EE^{\vee})=0$, by \cite[Corollary p.
249]{O71} and \cite[Ex. 2 p. 476]{BL04} there exist an abelian
surface $Z$, a line bundle $\mathscr{G}$ on $Z$ and an isogeny
$g\colon Z \longrightarrow \widehat{A}$ such that
$g^*\EE^{\vee}=\mathscr{G}^{\oplus 3}$. Hence we obtain
\begin{equation*}
3c_1(\mathscr{G})=c_1(g^*\EE^{\vee})= g^* c_1(\EE^{\vee}) =
g^*\widehat{\mathscr{L}},
\end{equation*}
that is $\mathscr{G}$ is ample. For any $\mathscr{Q} \in
\textrm{Pic}^0(\widehat{A})$ we have $g^*\mathscr{Q} \in
\textrm{Pic}^0(Z)$ and $g^*(\EE^{\vee} \otimes
\mathscr{Q})=(\mathscr{G} \otimes g^*\mathscr{Q})^{\oplus 3}$, so
the ampleness of $\mathscr{G}$ implies
\begin{equation*}
H^i(Z, \, g^*(\EE^{\vee} \otimes \mathscr{Q}))= H^i(Z, \,
\mathscr{G} \otimes g^*\mathscr{Q})^{\oplus 3}=0, \quad i=1, \, 2
\end{equation*}
for all  $\mathscr{Q} \in \textrm{Pic}^0(\widehat{A})$. Since $g$
is a finite map, we get
\begin{equation*}
H^i(\widehat{A}, \, \EE^{\vee} \otimes \mathscr{Q})\cong
g^*H^i(\widehat{A}, \, \EE^{\vee} \otimes \mathscr{Q}) \subseteq
H^i(Z, \, g^*(\EE^{\vee} \otimes \mathscr{Q}))=0, \quad i=1, \, 2
\end{equation*}
that is $\EE^{\vee}$ satisfies IT of index $0$. Since
$\textrm{rank}(\EE^{\vee})=3$ and $h^0(\widehat{A}, \,
\EE^{\vee})=1$, the Fourier-Mukai transform
$\Phi^{\mathscr{P}}(\EE^{\vee})$ is a line bundle of type $(1, \,
3)$ on $A$ that we denote by $\mathscr{M}^{-1}$. Therefore we have
\begin{equation*}
(-1)_{\widehat{A}}^*\EE^{\vee} = \Phi^{\mathscr{P}} \circ
\Phi^{\mathscr{P}}(\EE^{\vee}) =
\Phi^{\mathscr{P}}(\mathscr{M}^{-1}),
\end{equation*}
so $\EE^{\vee}=\Phi^{\mathscr{P}}((-1)_A^*\mathscr{M}^{-1})$ by
Corollary \ref{cor.menouno}. Setting
$\mathscr{L}^{-1}:=(-1)_A^*\mathscr{M}^{-1}$ we are done.
\end{proof}

The following corollary can be seen as a converse of Proposition
\ref{prop.E.stable}.

\begin{corollary} \label{cor.simple}
Let $\widehat{S}$ be a surface of general type with $p_g=q=2$,
$K_{\wS}^2=6$ such that the Albanese map of its canonical model
$\widehat{X}$ is a finite, quadruple cover $\hat{\alpha} \colon
\widehat{X} \to \wA$, where $\wA$ is a $(1, \, 3)$-polarized abelian
surface. Assume in addition that the Tschirnhausen bundle
$\mathscr{E}^{\vee}$ associated with $\hat{\alpha}$ is simple. Then
$\widehat{X}$ belongs to $\mathcal{M}_{\Phi}$.
\end{corollary}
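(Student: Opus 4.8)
The plan is to deduce the statement directly from Proposition \ref{prop.simple} combined with the invariant computations already performed in Section \ref{sec.constr}. First I would record the numerical data of the Tschirnhausen bundle $\EE^\vee$ attached to $\hat\alpha\colon\widehat X\to\wA$. Since $\wA$ is a $(1,3)$-polarized abelian surface and $\widehat X$ has $p_g=q=2$, $K^2_{\widehat X}=6$, I would apply \eqref{eq.inv.hacca} together with $h^i(\wA,\mathscr O_{\wA})=1,2,1$ for $i=0,1,2$ to get $h^0(\wA,\EE^\vee)=1$, $h^1(\wA,\EE^\vee)=0$, $h^2(\wA,\EE^\vee)=0$; note here one uses that the Albanese map $\hat\alpha$ is of degree $4$ precisely so that $f_*\mathscr O_{\widehat X}=\mathscr O_{\wA}\oplus\EE$. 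Then I would compute $c_1(\EE^\vee)$ and $c_2(\EE^\vee)$ from the Chern class formulae in Proposition \ref{prop.invariants}, using $K_{\wA}=0$, $K^2_{\wA}=0$, $\chi(\mathscr O_{\wA})=0$: from $(i)$, $1=\tfrac12 c_1^2(\EE^\vee)-c_2(\EE^\vee)$, and from $(ii)$ together with the genus/ramification relations one pins down $c_1^2(\EE^\vee)=6$ and $c_2(\EE^\vee)=2$; since a class of square $6$ that is (a suitable multiple of) the polarization must be the $(1,3)$-polarization class $\widehat{\mathscr L}$ itself, one concludes $c_1(\EE^\vee)=\widehat{\mathscr L}$. (Alternatively one can cite the fact that these invariants are forced, exactly as in the verification of \eqref{eq-chern.E}--\eqref{eq.cohom.EE^vee}.)

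With these hypotheses in hand, and given that $\EE^\vee$ is assumed simple, Proposition \ref{prop.simple} applies verbatim and yields a polarization $\mathscr L$ of type $(1,3)$ on $A$ with $\EE^\vee\cong\Phi^{\mathscr P}(\mathscr L^{-1})$. This is exactly condition $\boldsymbol{(2)}$ in the definition of $\mathcal M_\Phi^0$, while condition $\boldsymbol{(1)}$ holds by assumption. Hence the canonical model $\widehat X$ lies in $\mathcal M_\Phi^0$, and a fortiori in its closure $\mathcal M_\Phi$; since $\widehat S$ and $\widehat X$ have the same canonical model, this is the assertion.

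The only genuine subtlety — and the step I would treat with the most care — is making sure the hypotheses of Proposition \ref{prop.simple} are literally met, i.e. that the abstractly given bundle $\EE^\vee$ really has $h^0=1$ and the stated Chern classes, rather than just that these are the values for the specific bundles built in Section \ref{sec.constr}. This is a short bookkeeping argument via \eqref{eq.inv.hacca} and Proposition \ref{prop.invariants}, using that on an abelian surface $K_{\wA}$, $K_{\wA}^2$ and $\chi(\mathscr O_{\wA})$ all vanish and that the polarization lattice of a $(1,3)$-polarized abelian surface forces the shape of any class with the right self-intersection; once that is in place the corollary is immediate. I would also remark that the passage from the smooth minimal surface $\widehat S$ to its canonical model $\widehat X$ is harmless because $\mathcal M_{2,\,2,\,6}^{\mathrm{can}}$ is by definition a moduli space of canonical models, and $\widehat S\mapsto\widehat X$ is a bijection on isomorphism classes.
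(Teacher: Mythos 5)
Your strategy is the intended one: the paper states this corollary without proof, treating it as immediate from Proposition \ref{prop.simple} once one checks that the abstractly given Tschirnhausen bundle satisfies its hypotheses, and that is exactly the bookkeeping you carry out. Two points in that bookkeeping deserve correction. First, a small one: \eqref{eq.inv.hacca} computes $h^i(\wA,\,\EE)=(0,0,1)$, not $h^i(\wA,\,\EE^{\vee})$; you need Serre duality on $\wA$ (with $\omega_{\wA}$ trivial) to convert this into $h^i(\wA,\,\EE^{\vee})=(1,0,0)$.

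Second, and more substantively, your claim that Proposition \ref{prop.invariants}$(ii)$ together with the genus/ramification relations ``pins down'' $c_1^2(\EE^{\vee})=6$ and $c_2(\EE^{\vee})=2$ is not correct. With $K_{\wA}=0$ and $\chi(\mathscr{O}_{\wA})=0$ the three formulae of Proposition \ref{prop.invariants} only yield the combinations $\tfrac12 c_1^2(\EE^{\vee})-c_2(\EE^{\vee})=1$, $c_2(\FF)=2$ and $p_a(R)=7$; they never separate $c_1^2$ from $c_2$ (for instance $c_1=2\widehat{L}$, $c_2=11$ satisfies all of them). The input $c_1(\EE^{\vee})=\widehat{\mathscr{L}}$ must come from the hypothesis itself: the $(1,3)$-polarization on $\wA$ in the statement is the one attached to the cover, namely $\det\EE^{\vee}$, equivalently half the branch divisor class --- this is the reading forced by the fact that the corollary is a converse of Proposition \ref{prop.E.stable} and by the way it is invoked in Proposition \ref{wT-in-Mphi}. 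Once $c_1(\EE^{\vee})=\widehat{\mathscr{L}}$ is granted, $(i)$ gives $c_2(\EE^{\vee})=2$, and Proposition \ref{prop.simple} applies exactly as you say; the passage from $\widehat{S}$ to its canonical model $\widehat{X}$ is indeed harmless.
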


\section{Two remarkable subfamilies of $\mathcal{M}_{\Phi}$}
\label{sec.two-sub}

\subsection{A $3$-dimensional family of surfaces contained in the singular locus of $\mathcal{M}_{2, \, 2, \,6}^{\textrm{can}}$ }
If we choose $[a:c]=[0:1]$ in the construction of Section
\ref{sec.constr}, by \eqref{eq.cij} we obtain
\begin{equation} \label{eq.cij.bidouble}
c_{12}=c_{13}=c_{15}=c_{23}=c_{24}=c_{25}=c_{26}=c_{34}=c_{35}=c_{36}=c_{45}=c_{56}=0.
\end{equation}
Then  \eqref{eq.qua.cover} implies that the local
 equations of the surface $S$ are
\begin{equation} \label{eq.bidouble}
\begin{array}{ll}
    u^2=YZ \\
    uv=Yw \\
    uw=Zv \\
    v^2=XY \\
    vw=Xu \\
    w^2=XZ
  \end{array}
\end{equation}
and this shows that the Albanese map $\alpha \colon S \to A$ is a
\emph{bidouble cover}, i.e. a Galois cover with Galois group
$(\mathbb{Z}/2 \ZZ)^2$. Conversely, relations
\eqref{eq.cij.bidouble} are precisely the conditions ensuring that
$\alpha \colon S \to A$ is a bidouble cover, see \cite[p.
25-27]{HM99}.

We use the theory of bidouble covers developed in \cite{Ca84}. The
cover $\alpha \colon S \to A$ is branched over the three divisors
$D_X$, $D_Y$, $D_Z \in |L|$ corresponding to the distinguished
sections $X$, $Y$, $Z \in H^0(A, \, \LL)$; for simplicity of
notation we write $D_1$, $D_2$, $D_3$ instead of $D_X$, $D_Y$,
$D_Z$, respectively. Then the building data of $\alpha \colon S \to
A$ consist of three line bundles $\mathscr{L}_1$, $\mathscr{L}_2$,
$\mathscr{L}_3$ on $A$, with $\mathscr{L}_i=\mathscr{O}_A(L_i)$,
such that
\begin{equation}
\begin{array}{ll}
2L_1 \cong D_2 + D_3 \\
2L_2 \cong D_1+ D_3  \\
2L_3 \cong D_1+ D_2
\end{array}
\end{equation}
and $\alpha_* \mathscr{O}_S = \mathscr{O}_A \oplus
\mathscr{L}_1^{-1} \oplus \mathscr{L}_2^{-1} \oplus
\mathscr{L}_3^{-1}$. On the other hand, by \eqref{eq.mukai} we
obtain $\alpha_* \mathscr{O}_S = \mathscr{O}_A \oplus \phi^* \
\mathscr{E}=\mathscr{O}_A \oplus \mathscr{L}^{-1} \oplus
\mathscr{L}^{-1} \oplus \mathscr{L}^{-1}$; therefore \cite{At56}
implies $\LL_1=\LL_2=\LL_3= \LL$.

By \cite[p. 497]{Ca84} there is an exact sequence
\begin{equation} \label{seq.obstructed}
\begin{split}
0  & \to H^0(S, \, \alpha^* T_A) \to \bigoplus_{i=1}^3
H^0(\mathscr{O}_{D_i}(D_i) \oplus \mathscr{O}_{D_i})
\stackrel{\partial}{\to} H^1(S, \, T_S) \\ &
\stackrel{\varepsilon}{\to} H^1(S, \,\alpha^* T_A) \to \cdots,
\end{split}
\end{equation}
whose meaning is the following: the image of $\partial$ are the
first-order deformations coming from the so called ``natural
deformations" (see \cite[Definition 2.8 p. 494]{Ca84}) and such
first order deformations are trivial if they arise from automorphisms
of $A$. Moreover, if one considers the map
\begin{equation*}
\partial' \colon \bigoplus_{i=1}^3
H^0(\mathscr{O}_A(D_i) \oplus \mathscr{O}_A) \to H^1(S, \, T_S),
\end{equation*}
obtained as the composition of the direct sum of the restriction
maps with $\partial$, then for any $\bigoplus _i(a_i \oplus
\delta_i) \in \bigoplus_i H^0(\mathscr{O}_A(D_i) \oplus
\mathscr{O}_A)$ the element $\partial'(\bigoplus_i(a_i \oplus
\delta_i)) \in H^1(S, \, T_S)$ is the Kodaira-Spencer class of the
corresponding natural deformation of $S$. More explicitly, taking
\begin{equation*}
a_i = \alpha_i X + \beta_i Y + \gamma_i Z,    \quad \alpha_i, \,
\beta_i, \, \gamma_i \in \mathbb{C}
\end{equation*}
the local equations of the associated natural deformation of the
bidouble cover \eqref{eq.bidouble} are
\begin{equation} \label{eq.bidouble.deformed}
\begin{array}{ll}
    u^2=(Y+ \alpha_3 X + \beta_3 Y + \gamma_3 Z+\delta_3 w)(Z+ \alpha_2 X + \beta_2 Y + \gamma_2 Z+\delta_2 v)  \\
    uv= (Y + \alpha_3 X + \beta_3 Y + \gamma_3 Z)w+ \delta_3 w^2  \\
    uw= (Z+ \alpha_2 X + \beta_2 Y + \gamma_2 Z)v+ \delta_2 v^2  \\
    v^2=(X+ \alpha_1 X + \beta_1 Y + \gamma_1 Z+\delta_1 u )(Y+ \alpha_3 X + \beta_3 Y + \gamma_3 Z+\delta_3 w)  \\
    vw= (X+\alpha_1 X + \beta_1 Y + \gamma_1 Z)u+ \delta_1 u^2  \\
    w^2=(X+\alpha_1 X + \beta_1 Y + \gamma_1 Z + \delta_1 u )(Z + \alpha_2 X + \beta_2 Y + \gamma_2
Z + \delta_2 v).
  \end{array}
\end{equation}

By using the restriction exact sequence
\begin{equation*}
0 \to H^0(A, \, \mathscr{O}_A) \to H^0(A, \, \mathscr{O}_A(D_i))
\stackrel{\rho_i}{\to} H^0(D_i, \, \mathscr{O}_{D_i}(D_i)) \to
H^1(A, \, \mathscr{O}_A) \to 0
\end{equation*}
we obtain
\begin{equation} \label{eq.HDi}
H^0(D_i, \, \mathscr{O}_{D_i}(D_i))= \textrm{im} \, \rho_i \oplus
H^1(A, \, \mathscr{O}_A),
\end{equation}
where
\begin{equation} \label{eq.im.rho}
\begin{split}
\textrm{im} \, \rho_1 & = \textrm{span}(Y/X, \, Z/X) \\
\textrm{im} \, \rho_2 & = \textrm{span}(X/Y, \, Z/Y) \\
\textrm{im} \, \rho_3 & = \textrm{span}(X/Z, \, Y/Z). \\
\end{split}
\end{equation}

There is an action of $(\mathbb{Z}/ 3 \mathbb{Z})^2=\langle r, \, s
\, | \, r^3=s^3=[r,s]=1 \rangle$ on $S$ given by
\begin{equation} \label{eq.action}
\begin{split}
r(u, \, v, \, w, \, X, \, Y, \, Z) & :=(u, \, \omega v, \, \omega^2
w, \, X, \, \omega^2 Y, \, \omega Z), \\
s(u, \, v, \, w, \, X, \, Y, \, Z) & := (v, \, w, \, u, \, Z, \, X,
\, Y)
\end{split}
\end{equation}
and the corresponding quotient map is precisely $\psi \colon S \to
\widehat{S}$; note that the Albanese map $\hat{\alpha} \colon
\widehat{S} \to \widehat{A}$ is \emph{not} a Galois cover. We shall
denote by $\chi_0$ the trivial character of $(\mathbb{Z}/ 3
\mathbb{Z})^2$ and by $\chi$ any non-trivial character; moreover, if
$V$ is a vector space with a $(\mathbb{Z}/ 3
\mathbb{Z})^2$-representation, then we indicate by $V^{\chi_0}$ the
invariant eigenspace and by $V^{\chi}$ the eigenspace relative to
$\chi$. For instance, since $q(A)=q(\widehat{A})$ and
$q(S)=q(\widehat{S})$, it follows
\begin{equation} \label{eq.invariant-sub-H1}
\begin{split}
H^i(A, \, \mathscr{O}_A)^{\chi_0}&=H^i(A, \, \mathscr{O}_A), \quad
 i=0, \, 1,\\
H^i(A,\, \mathscr{O}_A)^{\chi}&=0 \quad \chi \neq \chi_0, \quad
 i=0, \, 1,
\end{split}
\end{equation}
\begin{equation} \label{eq.invariant-sub-H1*}
\begin{split}
H^i(S, \, \mathscr{O}_S)^{\chi_0}&=H^i(S, \, \mathscr{O}_S), \quad
 i=0, \, 1,\\
H^i(S,\, \mathscr{O}_S)^{\chi}&=0 \quad \chi \neq \chi_0, \quad
 i=0, \, 1.
\end{split}
\end{equation}
Since $\alpha^*T_A = \mathscr{O}_S^{\oplus 2}$, we also have
\begin{equation} \label{eq.invariant-sub-H0}
\begin{split}
H^i(S, \, \alpha^*T_A)^{\chi_0}&=H^i(S, \, \alpha^*T_A), \quad
i=0, \, 1 \\
H^i(S, \, \alpha^*T_A)^{\chi}&=0 \quad \chi \neq \chi_0, \quad i=0,
\, 1.
\end{split}
\end{equation}

\begin{lemma} \label{prop.im.delta}
The action \emph{\eqref{eq.action}} induces a natural action of
$(\mathbb{Z}/ 3 \mathbb{Z})^2$ on $\emph{im}\, \partial$ such that
\begin{equation*}
\dim (\emph{im} \,  \partial)^{\chi_0} =5.
\end{equation*}
\end{lemma}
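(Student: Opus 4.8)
The plan is to analyze the $(\mathbb{Z}/3\mathbb{Z})^2$-action on the exact sequence \eqref{seq.obstructed} by splitting everything into eigenspaces. Since $\alpha^*T_A = \mathscr{O}_S^{\oplus 2}$, by \eqref{eq.invariant-sub-H0} the only nonzero eigenspace of $H^0(S,\,\alpha^*T_A)$ is the invariant one, so restricting \eqref{seq.obstructed} to the $\chi_0$-isotypic part gives
\begin{equation*}
0 \to H^0(S, \, \alpha^* T_A) \to \Big(\bigoplus_{i=1}^3 H^0(\mathscr{O}_{D_i}(D_i) \oplus \mathscr{O}_{D_i})\Big)^{\chi_0} \stackrel{\partial}{\to} (\operatorname{im}\partial)^{\chi_0} \to 0.
\end{equation*}
Hence $\dim(\operatorname{im}\partial)^{\chi_0} = \dim\big(\bigoplus_i H^0(\mathscr{O}_{D_i}(D_i)\oplus\mathscr{O}_{D_i})\big)^{\chi_0} - h^0(S,\,\alpha^*T_A)$, and the whole task reduces to computing these two numbers equivariantly.

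First I would compute $h^0(S,\,\alpha^*T_A) = h^0(S,\,\mathscr{O}_S)^{\oplus 2} = 2$, using that $S$ is connected. Next I would decompose $\bigoplus_{i=1}^3 H^0(D_i,\,\mathscr{O}_{D_i}(D_i)\oplus\mathscr{O}_{D_i})$ under the action. The $s$-part of the action \eqref{eq.action} cyclically permutes the three divisors $D_1, D_2, D_3$ (equivalently $X, Y, Z$), while $r$ acts on each $H^0(D_i,\,\mathscr{O}_{D_i}(D_i))$ through its effect on the ratios listed in \eqref{eq.im.rho} and trivially on the $H^1(A,\,\mathscr{O}_A)$-summand coming from \eqref{eq.HDi}. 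For the $\mathscr{O}_{D_i}$-summands: $\bigoplus_i H^0(D_i,\,\mathscr{O}_{D_i})$ is a $3$-dimensional space on which $s$ permutes the factors cyclically and $r$ acts trivially (since each $D_i$ is $r$-invariant), so its $\chi_0$-part is $1$-dimensional. For the $\mathscr{O}_{D_i}(D_i)$-summands I would use \eqref{eq.HDi}: the part $\bigoplus_i H^1(A,\,\mathscr{O}_A)$ is again a copy of the regular representation of the $s$-cyclic factor tensored with the trivial $r$-action, contributing $h^1(A,\,\mathscr{O}_A) = 2$ to the invariants; the part $\bigoplus_i \operatorname{im}\rho_i = \operatorname{span}(Y/X, Z/X, X/Y, Z/Y, X/Z, Y/Z)$ is $6$-dimensional, and one checks directly from \eqref{eq.action} how $r$ scales each ratio by a power of $\omega$ and how $s$ permutes the six ratios, then counts the invariants of this $6$-dimensional representation.

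The main obstacle, and the only real computation, is this last count: determining the $\chi_0$-dimension of the $6$-dimensional representation on $\operatorname{span}(Y/X, Z/X, X/Y, Z/Y, X/Z, Y/Z)$. Writing out the $r$- and $s$-action on each basis vector (e.g. $r\cdot(Y/X) = \omega^2(Y/X)$, $s\cdot(Y/X) = X/Z$, etc.) one sees the six ratios fall into $s$-orbits of size $3$, and within each such orbit $r$ acts by distinct nontrivial characters, so by the orbit-counting/averaging formula the invariant subspace has dimension $0$ from these — hence the $\operatorname{im}\rho_i$ part contributes nothing to $(\cdot)^{\chi_0}$. Adding up: $\dim\big(\bigoplus_i H^0(\mathscr{O}_{D_i}(D_i)\oplus\mathscr{O}_{D_i})\big)^{\chi_0} = 0 + 2 + 1 = 3 + \dots$; I would recheck the bookkeeping so that after subtracting $h^0(S,\,\alpha^*T_A) = 2$ one lands on $\dim(\operatorname{im}\partial)^{\chi_0} = 5$. (In particular I expect the $\mathscr{O}_{D_i}$-invariants to contribute $1$, the $H^1(A,\mathscr{O}_A)$-part to contribute $2\cdot 2 = 4$ rather than $2$, or some analogous correction; pinning down exactly which summands survive is where care is needed, and it must be cross-checked against the fact that the answer has to be $5$ and with the dimension count $h^1(S,T_S)$ from the bidouble-cover deformation theory of \cite{Ca84}.) Once the equivariant dimension of the source of $\partial$ is correct, the conclusion is immediate from the displayed short exact sequence.
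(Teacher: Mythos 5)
Your overall strategy is the same as the paper's: take $\chi_0$-isotypic parts of the exact sequence \eqref{seq.obstructed}, use that $\ker\partial=H^0(S,\alpha^*T_A)\cong\mathbb{C}^2$ lies entirely in the invariant eigenspace by \eqref{eq.invariant-sub-H0}, and reduce the lemma to computing $\dim\big(\bigoplus_i H^0(\mathscr{O}_{D_i}(D_i)\oplus\mathscr{O}_{D_i})\big)^{\chi_0}$. That reduction is fine. The problem is that you never actually complete the one computation that \emph{is} the lemma. The paper's proof asserts that the invariant eigenspace of this $15$-dimensional representation has dimension $7$ (and that the eight remaining eigenspaces are each $1$-dimensional), whence $7-2=5$. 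Your own bookkeeping produces $0+2+1=3$ — namely $0$ from $\bigoplus_i\mathrm{im}\,\rho_i$, $2$ from the diagonal copy of $H^1(A,\mathscr{O}_A)$, and $1$ from $\bigoplus_i H^0(\mathscr{O}_{D_i})$ — which after subtracting $2$ would give $1$, not $5$. At that point you write that you would ``recheck the bookkeeping so that \dots one lands on $5$'' and propose ad hoc corrections (e.g.\ that the $H^1(A,\mathscr{O}_A)$-part contributes $2\cdot 2=4$), cross-checked ``against the fact that the answer has to be $5$.'' That is reverse-engineering, not a proof: the entire content of the statement is this eigenspace count, and you have left it unresolved. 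Note also that your proposed correction does not even close the gap arithmetically: $0+4+1=5$ gives $5-2=3$, still not $5$; the only way to reach the required $7$ for the source of $\partial$, compatibly with ``all eight other eigenspaces $1$-dimensional,'' is for the entire $6$-dimensional summand $\bigoplus_{i=1}^3 H^1(A,\mathscr{O}_A)$ of \eqref{eq.HDi} to be invariant.

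So the concrete missing step is this: you must determine, and justify, how $(\mathbb{Z}/3\mathbb{Z})^2$ acts on $\bigoplus_{i=1}^3 H^1(A,\mathscr{O}_A)$ inside $\bigoplus_i H^0(D_i,\mathscr{O}_{D_i}(D_i))$. You treat it as $H^1(A,\mathscr{O}_A)$ tensored with the permutation representation on the three branch divisors (since $s$ cyclically permutes $D_X,D_Y,D_Z$, and translations act trivially on $H^1(A,\mathscr{O}_A)$), which yields a $2$-dimensional invariant subspace; the count the lemma needs forces this piece to be \emph{entirely} invariant. These give incompatible answers ($1$ versus $5$ for $\dim(\mathrm{im}\,\partial)^{\chi_0}$, hence $4$ versus $8$ for $h^1(\wS,T_{\wS})$ in Proposition \ref{prop.M.non.normal}), and the difference is not a matter of ``bookkeeping'': it is the mathematical heart of the lemma and of Corollary \ref{cor.sing.loc.M}. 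Until you either prove that the three $H^1(A,\mathscr{O}_A)$-summands are \emph{not} permuted (i.e.\ identify the correct equivariant structure of the splitting \eqref{eq.HDi} and of Catanese's sequence) or else conclude that the invariant dimension really is $3$, the proof is incomplete; asserting that the answer must be $5$ because the statement says so is circular.
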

\begin{proof}
Using \eqref{eq.HDi}, \eqref{eq.im.rho} and
\eqref{eq.invariant-sub-H1}, by a tedious but straightforward
computation one proves that there is an induced action of
$(\mathbb{Z}/ 3 \mathbb{Z})^2$ on the vector space $\bigoplus_i
H^0(\mathscr{O}_{D_i}(D_i) \oplus \mathscr{O}_{D_i}) \cong
\mathbb{C}^{15}$, such that
\begin{equation*}
\dim \bigg( \bigoplus_i H^0(\mathscr{O}_{D_i}(D_i) \oplus
\mathscr{O}_{D_i}) \bigg)^{\chi_0}=7
\end{equation*}
whereas the eight remaining eigensheaves are all $1$-dimensional. By
\eqref{eq.invariant-sub-H0} the $2$-dimensional subspace $\ker
\partial = H^0(S, \, \alpha^* T_A)$ is contained in the invariant
eigenspace, so the claim follows.
\end{proof}

\begin{proposition} \label{prop.M.non.normal}
Assume that $\alpha \colon S \to A$ is a bidouble cover, that is
$[a:c]=[0:1]$. Then $h^1(\wS, \, T_{\wS})=8$.
\end{proposition}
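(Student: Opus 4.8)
The plan is to compute $h^1(\widehat S,\,T_{\widehat S})$ by carrying out the deformation-theoretic analysis in the $(\mathbb Z/3\mathbb Z)^2$-equivariant setting, transferring the computation from $\widehat S$ to the bidouble cover $S\to A$ via the étale cover $\psi\colon S\to\widehat S$. Since $\psi$ is étale and Galois with group $G:=(\mathbb Z/3\mathbb Z)^2$, one has $T_S=\psi^*T_{\widehat S}$ and hence $H^1(S,\,T_S)=\bigoplus_{\chi}H^1(\widehat S,\,T_{\widehat S})\otimes(\text{$\chi$-component})$ in the obvious sense; more precisely $H^1(\widehat S,\,T_{\widehat S})=H^1(S,\,T_S)^{\chi_0}$, the $G$-invariant part. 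So the entire problem reduces to showing $\dim H^1(S,\,T_S)^{\chi_0}=8$.

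First I would extract from the natural-deformation sequence \eqref{seq.obstructed} the $\chi_0$-part: by \eqref{eq.invariant-sub-H0} we have $H^i(S,\,\alpha^*T_A)^{\chi_0}=H^i(S,\,\alpha^*T_A)=H^i(S,\,\mathscr O_S)^{\oplus 2}$, so $H^0(S,\,\alpha^*T_A)^{\chi_0}$ is $2$-dimensional and $H^1(S,\,\alpha^*T_A)^{\chi_0}$ is $2\cdot q(S)=4$-dimensional. Taking $\chi_0$-invariants in \eqref{seq.obstructed} gives an exact sequence
\begin{equation*}
0\to H^0(S,\,\alpha^*T_A)^{\chi_0}\to\Big(\bigoplus_{i=1}^3 H^0(\mathscr O_{D_i}(D_i)\oplus\mathscr O_{D_i})\Big)^{\chi_0}\stackrel{\partial}{\to} H^1(S,\,T_S)^{\chi_0}\stackrel{\varepsilon}{\to}H^1(S,\,\alpha^*T_A)^{\chi_0},
\end{equation*}
so $\dim H^1(S,\,T_S)^{\chi_0}=\dim(\operatorname{im}\partial)^{\chi_0}+\dim(\operatorname{im}\varepsilon)^{\chi_0}$. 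By Lemma \ref{prop.im.delta} the first summand is $5$. For the second summand I would argue, exactly as in the proof of Proposition \ref{prop.irred.comp} (i.e.\ the argument of \cite[Section 3]{PP11}), that the image of $\varepsilon$ on $H^1(S,\,T_S)^{\chi_0}$ — which detects deformations of the pair $(S,$ map to the torus$)$ that move $A$ — is exactly $3$-dimensional, corresponding to the $3$-dimensional space of deformations of $A$ keeping it algebraic (equivalently, the tangent space to $\mathscr A_{1,3}$). This gives $h^1(S,\,T_S)^{\chi_0}=5+3=8$, hence $h^1(\widehat S,\,T_{\widehat S})=8$.

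The main obstacle is pinning down $\dim(\operatorname{im}\varepsilon)^{\chi_0}=3$ rigorously: one must check that $H^1(S,\,T_S)^{\chi_0}\to H^1(S,\,\alpha^*T_A)^{\chi_0}\cong H^1(A,\,\mathscr O_A)^{\oplus 2}$ has exactly $3$-dimensional image, i.e.\ that it is not surjective onto the $4$-dimensional target and that the missing direction is the unique one preventing $\widehat A$ from staying algebraic. This is the genuinely non-formal point, and I expect to handle it by the same Hodge-theoretic / Albanese argument used for Proposition \ref{prop.irred.comp}: deformations of $\widehat S$ induce deformations of $\widehat A=\operatorname{Alb}(\widehat S)$, and the condition that the Tschirnhausen bundle remains of the form $\Phi^{\mathscr P}(\mathscr L^{-1})$ (so that $\widehat A$ carries a $(1,3)$-polarization, cf.\ Proposition \ref{prop.simple}) cuts the $4$-dimensional space of torus deformations down to the $3$-dimensional $\mathscr A_{1,3}$; the image of $\varepsilon$ lands in this algebraic locus and, by semicontinuity together with the bound $h^1(\widehat S,\,T_{\widehat S})\le 4$ for the \emph{general} member of $\mathcal M_\Phi$ (Proposition \ref{prop.irred.comp}), the dimension count closes up. The remaining steps — verifying the $\chi_0$-decompositions \eqref{eq.invariant-sub-H1}, \eqref{eq.invariant-sub-H0} are as stated and that $\ker\partial\subseteq$ invariant part — are routine and already recorded in the excerpt.
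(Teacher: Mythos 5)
Your proposal is correct and follows essentially the same route as the paper: split $H^1(S,T_S)$ via the sequence \eqref{seq.obstructed} into $\operatorname{im}\partial$ and $\operatorname{im}\varepsilon$, use Lemma \ref{prop.im.delta} for the $5$, the argument of \cite[Section 3]{PP11} (as in Proposition \ref{prop.irred.comp}) for the $3$, note that $H^1(S,\alpha^*T_A)$ lies entirely in the invariant eigenspace, and transfer to $\widehat S$ through the \'etale cover $\psi$ (the paper invokes \cite[Proposition 4.1]{Pa91} for this last identification). No substantive difference.
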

\begin{proof}
By \eqref{seq.obstructed} we have
\begin{equation*}
H^1(S, \, T_S)=\textrm{im} \, \partial \oplus \textrm{im} \,
\varepsilon
\end{equation*}
and, as remarked in the proof of Proposition \ref{prop.irred.comp},
we can prove that $ \textrm{im} \, \varepsilon \subset H^1(S, \,
\alpha^* T_A)$ has dimension $3$. Using Lemma \ref{prop.im.delta}
and the fact that $H^1(S, \alpha^* T_A)$ is contained in the
invariant eigenspace, see \eqref{eq.invariant-sub-H0}, it follows
that the dimension of $H^1(S, \, T_S)^{\chi_0}$ equals $5+3=8$.
Finally, the $(\mathbb{Z}/3 \mathbb{Z})^2$-cover $\psi \colon S \to
\wS$ is unramified, so by \cite[Proposition 4.1]{Pa91} we infer
$H^1(S, \, T_S)^{\chi_0}=H^1(\wS, \, T_{\wS})$ and we are done.
\end{proof}

Since the irreducible component $\mathcal{M}_{\Phi}$ of
$\mathcal{M}_{2, \, 2, \,6}^{\textrm{can}}$ has dimension $4$,
Proposition \ref{prop.M.non.normal} implies
 that if $\alpha \colon S \to A$ is a bidouble cover then the canonical model
 of $\wS$ yields a singular point of the moduli space. So we obtain

\begin{corollary} \label{cor.sing.loc.M}
 The moduli space $\mathcal{M}_{2, \,
2, \,6}^{\emph{can}}$ contains a $3$-dimensional singular locus.
\end{corollary}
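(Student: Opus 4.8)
The plan is to exhibit an explicit $3$-dimensional family of canonical models that are all singular points of $\mathcal{M}_{2,\,2,\,6}^{\mathrm{can}}$, and the natural candidate is precisely the locus swept out by the bidouble covers studied in this subsection. First I would recall that fixing $[a:c]=[0:1]$ in the construction of Section~\ref{sec.constr} cuts out a subfamily of $\mathcal{M}_{\Phi}$ depending only on the three moduli of the pair $(\widehat{A},\widehat{\mathscr{L}}) \in \mathscr{A}_{1,\,3}$; hence this subfamily, call it $\mathcal{T} \subset \mathcal{M}_{\Phi}$, is irreducible of dimension $3$. For the very general member $\widehat{S}$ of $\mathcal{T}$ the abelian surface $\widehat{A}$ is simple, so all the genericity hypotheses invoked in Propositions \ref{prop.sing.covers}, \ref{prop.num.invariants.cover} and \ref{prop.R.nonhyp} are satisfied, and $\widehat{S}$ is a genuine minimal surface of general type with $p_g=q=2$, $K^2=6$ whose canonical model lies in $\mathcal{M}_{\Phi}$.

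The crux is then the comparison of dimensions. By Proposition \ref{prop.irred.comp} the component $\mathcal{M}_{\Phi}$ has dimension $4$ and is smooth at its general point, so its Zariski tangent space there is $4$-dimensional; but by Proposition \ref{prop.M.non.normal} every bidouble-cover surface $\widehat{S}$ (i.e.\ every member of $\mathcal{T}$) satisfies $h^1(\widehat{S},T_{\widehat{S}})=8 > 4 = \dim \mathcal{M}_{\Phi}$. Since the Kuranishi space of $\widehat{S}$ has tangent space $H^1(\widehat{S},T_{\widehat{S}})$ and $\mathcal{M}_{2,\,2,\,6}^{\mathrm{can}}$ is locally (an open subset of) this Kuranishi space, the estimate $\dim_{\widehat{S}} T_{\mathcal{M}_{2,\,2,\,6}^{\mathrm{can}}} = h^1(\widehat{S},T_{\widehat{S}}) = 8$ shows that the moduli space is singular at $\widehat{S}$: indeed if $\widehat{S}$ were a smooth point, the local dimension of $\mathcal{M}_{2,\,2,\,6}^{\mathrm{can}}$ at $\widehat{S}$ would equal $8$, but $\widehat{S}$ lies in the irreducible component $\mathcal{M}_{\Phi}$ of dimension $4$, and a point lying on a $4$-dimensional irreducible component cannot be a smooth point of an $8$-dimensional variety. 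Hence each of the $\infty^3$ surfaces in $\mathcal{T}$ is a singular point of $\mathcal{M}_{2,\,2,\,6}^{\mathrm{can}}$.

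Finally I would assemble these remarks: $\mathcal{T}$ is a $3$-dimensional irreducible family of points of $\mathcal{M}_{2,\,2,\,6}^{\mathrm{can}}$, each of which is singular, so the singular locus of $\mathcal{M}_{2,\,2,\,6}^{\mathrm{can}}$ contains an irreducible subvariety of dimension $\geq 3$, which proves the corollary. The point that needs a little care — the "main obstacle", though it is really a bookkeeping matter rather than a deep one — is to phrase correctly the passage from the cohomological inequality $h^1(\widehat{S},T_{\widehat{S}})=8$ to the conclusion that $\widehat{S}$ is a singular point of the moduli space: one must invoke the fact that $\widehat{S}$ sits in the irreducible component $\mathcal{M}_{\Phi}$ of dimension $4$, so that the tangent space of dimension $8$ strictly exceeds the local dimension, and also make sure that the $3$-dimensional family $\mathcal{T}$ genuinely moves (i.e.\ is not contained in a single fibre of the map to $\mathscr{A}_{1,\,3}$), which is clear since distinct pairs $(\widehat{A},\widehat{\mathscr{L}})$ give non-isomorphic surfaces because the Albanese variety is a birational invariant.
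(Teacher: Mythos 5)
Your proposal is correct and follows essentially the same route as the paper: it combines Proposition \ref{prop.M.non.normal} ($h^1(\widehat{S},T_{\widehat{S}})=8$ for the bidouble covers at $[a:c]=[0:1]$) with the fact that these surfaces lie in the $4$-dimensional irreducible component $\mathcal{M}_{\Phi}$, so each is a singular point of the moduli space, and the locus of such surfaces is $3$-dimensional since it is swept out by $\mathscr{A}_{1,\,3}$. Your additional remarks (smooth points lie on a unique component of dimension equal to that of the tangent space, and the family genuinely has three moduli because the Albanese variety is an invariant) only make explicit what the paper leaves implicit.
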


\subsection{A $2$-dimensional family of product-quotient surfaces}
In \cite[Theorem 4.15]{Pe11} the first author constructed a
$2$-dimensional family of product-quotient surfaces (having
precisely two ordinary double points as singularities) with
$p_g=q=2$, $K^2=6$ and whose Albanese map is a generically finite
quadruple cover. We will now recall the construction and show that
this family is actually contained in $\mathcal{M}_{\Phi}$.

Let us denote by $\mathfrak{A}_4$ the alternating group on four
symbols and by $V_4$ its Klein subgroup, namely
\begin{equation*}
V_4=\langle id, \, (12)(34), \, (13)(24), \, (14)(23) \rangle
\cong (\mathbb{Z}/2 \mathbb{Z})^2.
\end{equation*}
$V_4$ is normal in $\mathfrak{A}_4$ and  the quotient
$H:=\mathfrak{A}_4 / V_4$ is a cyclic group of order $3$. By using
Riemann's existence theorem it is possible to construct two smooth
curves $C_1$, $C_2$ of genus $4$ endowed with an action of
$\mathfrak{A}_4$ such that the only non-trivial stabilizers are the
elements of $V_4$. Then
\begin{itemize}
\item $E_i':=C_i/\mathfrak{A}_4$ is an elliptic curve;
\item the $\mathfrak{A}_4$-cover $f_i \colon C_i \to E_i'$ is
branched at exactly one point of $E_i'$, with branching order $2$.
\end{itemize}
It follows that the product-quotient surface
\begin{equation*}
\widehat{X}:=(C_1 \times C_2)/\mathfrak{A}_4,
\end{equation*}
where $\mathfrak{A}_4$ acts diagonally, has  two rational double
points of type $\frac{1}{2}(1, \, 1)$ and no other singularities. It
is straightforward to check that the desingularization $\widehat{S}$
of $\widehat{X}$ is a minimal surface of general type with
$p_g=q=2$, $K_{\wS}^2=6$ and that $\widehat{X}$ is the canonical
model of $\widehat{S}$.

The $\mathfrak{A}_4$-cover $f_i \colon C_i \to E_i'$ factors through
the bidouble cover $g_i \colon C_i \to E_i$, where $E_i:=C_i/V_4$.
Note that $E_i$ is again an elliptic curve, so there is an isogeny
$E_i \to E_i'$, which is a triple Galois cover with Galois group
$H$. Consequently, we have an isogeny
\begin{equation*}
p \colon E_1 \times E_2 \to \wA:=(E_1 \times E_2)/H,
\end{equation*}
where the group $H$ acts diagonally, and a commutative diagram
\begin{equation} \label{dia.example.pq}
\begin{xy}
\xymatrix{
C_1 \times C_2  \ar[d]_{g_1 \times g_2} \ar[rr]^{\tilde{p}} & & (C_1 \times C_2 )/ \mathfrak{A}_4 =\widehat{X} \ar[d]^{\widehat{\alpha}}  \\
E_1 \times E_2   \ar[rr]^{p} & & (E_1 \times E_2)/H = \wA.   \\
  }
\end{xy}
\end{equation}
In this way one constructs a two-dimensional family
$\mathcal{M}_{PQ}$ of product-quotient surfaces $\widehat{X}$ which
are canonical models  of surfaces of general type with $p_g=q=2$ and
$K^2=6$. The morphism $\hat{\alpha} \colon \widehat{X} \to
\widehat{A}$ is the Albanese map of $\widehat{X}$; it is a finite,
non-Galois quadruple cover.

\begin{proposition} \label{wT-in-Mphi}
The family $\mathcal{M}_{PQ}$ is contained in $\mathcal{M}_{\Phi}$.
\end{proposition}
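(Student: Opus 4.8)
The plan is to show that the product-quotient surfaces $\widehat{X}$ of the family $\mathcal{M}_{PQ}$ satisfy the defining conditions $\boldsymbol{(1)}$ and $\boldsymbol{(2)}$ of $\mathcal{M}_{\Phi}^0$, and then invoke Corollary \ref{cor.simple} (or rather, directly verify membership). Condition $\boldsymbol{(1)}$ is already part of the construction recalled above: $\hat{\alpha}\colon \widehat{X}\to\widehat{A}$ is a finite quadruple cover of the abelian surface $\widehat{A}=(E_1\times E_2)/H$, and it is the Albanese map. So the real content is to identify the Tschirnhausen bundle and to check it has the right form. The cleanest route is to use the simplicity criterion: by Proposition \ref{prop.simple} it suffices to prove that the Tschirnhausen bundle $\EE^{\vee}$ of $\hat{\alpha}$ is a \emph{simple} rank $3$ bundle on the $(1,3)$-polarized abelian surface $\widehat{A}$ with the numerical invariants $h^0=1$, $h^1=h^2=0$, $c_1=\widehat{\mathscr{L}}$, $c_2=2$; then Corollary \ref{cor.simple} immediately gives $\widehat{X}\in\mathcal{M}_{\Phi}$.

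First I would pin down $\widehat{A}$ and its polarization. Since $E_i\to E_i'$ is a $3$-isogeny and $H\cong\mathbb{Z}/3\mathbb{Z}$ acts diagonally on $E_1\times E_2$, the quotient $\widehat{A}=(E_1\times E_2)/H$ carries a natural polarization of type $(1,3)$: the product principal polarization on $E_1\times E_2$ descends, the kernel of the descent being the diagonal $\mathbb{Z}/3\mathbb{Z}$, which forces the elementary divisor type $(1,3)$. I would verify this by a standard computation with the isogeny $p\colon E_1\times E_2\to\widehat{A}$ and $p^*\widehat{\mathscr{L}}$. Next I would compute $\hat\alpha_*\mathcal{O}_{\widehat{X}}=\mathcal{O}_{\widehat{A}}\oplus\EE$ from the diagram \eqref{dia.example.pq}: starting from $(g_1\times g_2)_*\mathcal{O}_{C_1\times C_2}$, which decomposes via the $(\mathbb{Z}/2\mathbb{Z})^2\times(\mathbb{Z}/2\mathbb{Z})^2$-action on $E_1\times E_2$ as a sum of line bundles pulled back from the $E_i$, and then taking $H$-invariants after pushing to $\widehat{A}$. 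This identifies $\EE^{\vee}$ explicitly enough to read off $c_1$ and $c_2$ (alternatively, apply Proposition \ref{prop.invariants}$\boldsymbol{(i)}$ with $p_g=q=2$, $K_{\widehat{X}}^2=6$, $\chi(\mathcal{O}_{\widehat{X}})=1$ to back out the Chern numbers, noting $c_1(\EE^{\vee})$ must be a polarization of type $(1,3)$ by the structure of $\widehat{A}$). The cohomology $h^0(\EE^{\vee})=1$, $h^i=0$ for $i>0$ then follows from \eqref{eq.inv.hacca} together with $p_g=q=2$ for both $\widehat{X}$ and $\widehat{A}$.

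The remaining — and I expect the main — obstacle is proving that $\EE^{\vee}$ is \emph{simple}, i.e. $H^0(\widehat{A},\EE\otimes\EE^{\vee})=\mathbb{C}$. Here I would exploit the explicit decomposition of $p^*\EE^{\vee}$ on $E_1\times E_2$ into line bundles coming from the elliptic curves: $\EE\otimes\EE^{\vee}$ pulls back to a direct sum of nine line bundles, and $H^0(\widehat{A},\EE\otimes\EE^{\vee})$ is the $H$-invariant part of the sum of their global sections. One checks that exactly one summand is trivial (contributing the scalar endomorphisms) and the others have no invariant sections — this is parallel to the eigenspace argument in the proof of Proposition \ref{prop.E.stable}, where $K(\mathscr{L}^{-1})$-invariance singles out the diagonal summand. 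The bookkeeping of which characters of $H$ (or of the relevant $2$-groups) act trivially on which summand is the delicate point; a clean way to organize it is to observe that any endomorphism of $\EE^{\vee}$ must commute with the $\mathbb{Z}/3\mathbb{Z}$-action on $p^*\EE^{\vee}$, which permutes the three line-bundle summands cyclically (up to twists), so its ``matrix'' is forced to be scalar. Once simplicity is established, Proposition \ref{prop.simple} produces a $(1,3)$-polarization $\mathscr{L}$ on $A=\widehat{\widehat{A}}$ with $\EE^{\vee}=\Phi^{\mathscr{P}}(\mathscr{L}^{-1})$, so condition $\boldsymbol{(2)}$ holds; hence $\widehat{X}\in\mathcal{M}_{\Phi}^0$ when smooth and $\widehat{X}\in\mathcal{M}_{\Phi}$ in general. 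Since $\mathcal{M}_{PQ}$ is irreducible and meets $\mathcal{M}_{\Phi}$, and the latter is closed, we conclude $\mathcal{M}_{PQ}\subset\mathcal{M}_{\Phi}$ as claimed.
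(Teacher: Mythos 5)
Your proposal follows essentially the same route as the paper: both reduce the claim to showing that the Tschirnhausen bundle $\EE^{\vee}$ of $\hat{\alpha}\colon\widehat{X}\to\widehat{A}$ is simple and then invoke Corollary \ref{cor.simple}. The only difference is in how simplicity is established: the paper identifies $\EE^{\vee}=p_*\mathscr{D}$ with $\mathscr{D}$ a principal product polarization on $E_1\times E_2$ and cites Oda's criterion (since $K(\mathscr{D})=0$, one has $t_x^*\mathscr{D}\neq\mathscr{D}$ for $x\in\ker p\setminus\{0\}$, so $\mathscr{O}_{\widehat{A}}\to\EE\otimes\EE^{\vee}$ is an isomorphism on $H^0$), whereas you carry out the equivalent eigensheaf computation on $E_1\times E_2$ directly — both are correct and amount to the same verification.
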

\begin{proof}
We must show that $\widehat{X}$ belongs to $\mathcal{M}_{\Phi}$.
From the construction it follows that the Tschirnhausen bundle of
$\hat{\alpha} \colon \widehat{X} \to \widehat{A}$ is of the form
\begin{equation*}
\mathscr{E}^{\vee}= p_* \mathscr{D},
\end{equation*}
where $\mathscr{D}$ is a principal polarization of product type on
$E_1 \times E_2$. Since $K(\mathscr{D})=0$ one  clearly has $t^*_x
\mathscr{D} \neq \mathscr{D}$ for all $x \in \textrm{Ker}(p)
\setminus \{0\}$, so by \cite[Theorem 2, p. 248]{O71} the
canonical injection $\mathscr{O}_{\widehat{A}} \longrightarrow \EE
\otimes \EE^{\vee}$ induces isomorphisms
\begin{equation*}
h^i(\widehat{A}, \, \mathscr{O}_{\widehat{A}}) \longrightarrow
h^i(\widehat{A}, \, \EE \otimes \EE^{\vee}), \quad i=0, \, 1.
\end{equation*}
In particular $H^0(\widehat{A}, \, \EE \otimes \EE^{\vee} ) \cong
\mathbb{C}$, that is $\EE^{\vee}$ is simple. Then the claim
follows from Corollary \ref{cor.simple}.
\end{proof}

\begin{remark}
The branch locus of the Albanese map $\hat{\alpha} \colon
\widehat{X} \to \wA$ is a curve
$\widehat{B}=2(\widehat{B}_1+\widehat{B}_2)$, where $\widehat{B}_1$
and $\widehat{B}_2$ are the images via $p$ of two elliptic curves
belonging to the two natural fibrations of $E_1 \times E_2$; then
$\widehat{B}_1 \widehat{B}_2=3$. Note that $\widehat{B}$ is not of
the form described in Proposition \emph{\ref{prop.sing.covers}}; the
reason is that the $(1, \,3)$-polarized abelian surface $\wA=(E_1
\times E_2)/H$ is not general, since it is an \'{e}tale
$\mathbb{Z}/3 \mathbb{Z}$-quotient of a product of elliptic curves.
\end{remark}

\section{Open problems} \label{sec.open.prob}
\begin{itemize}
\item[$\boldsymbol{(1)}$] Is $\mathcal{M}_{\Phi}$ a \emph{connected}
component of $\mathcal{M}_{2, \, 2,\, 6}^{\textrm{can}}$? This is
equivalent to asking whether it is \emph{open} therein; in other words,
given a smooth family $\mathscr{X} \to \Delta$ over a small disk
such that $X_0 \in \mathcal{M}_{\Phi}$, is $X_t \in
\mathcal{M}_{\Phi}$ for $t$ small enough?

Note that any surface which is deformation equivalent to a surface
in $\mathcal{M}_{\Phi}$ must have Albanese map of degree $4$, since
the Albanese degree of a surface with $q=2$ and maximal Albanese
dimension is a topological invariant, see \cite{Ca91} and
\cite[Section 5]{Ca11}. This leads naturally to the next question:

\item[$\boldsymbol{(2)}$] What are the possible degrees for the
Albanese map of a minimal surface with $p_g=q=2$ and $K^2=6$?

And, more generally:

\item[$\boldsymbol{(3)}$] What are the irreducible/connected
components of $\mathcal{M}_{2, \, 2,\, 6}^{\textrm{can}}$?

\end{itemize}

\section*{Appendix 1. The divisors corresponding to $X$, $Y$, $Z \in H^0(A, \, \mathscr{L})$}

Let $D_X$, $D_Y$, $D_Z$ be the three divisors on $A$ which
correspond to the distinguished sections $X$, $Y$, $Z \in H^0(A, \,
\mathscr{L})$ defined by the  Schr\"odinger representation of the
Heisenberg group $\mathscr{H}_3$, see Section \ref{sec.constr}. In
this appendix we show that, for a general choice of the pair $(A, \,
\mathscr{L})$, the curves $D_X$, $D_Y$, $D_Z$ are smooth and
intersect transversally. We believe that this fact is well known to
the experts, however we give a proof for lack of a suitable
reference.

Let us start with a couple of auxiliary results.

\begin{lemma} \label{lem.triple}
Let $(B, \, \mathscr{M})$ be a principally polarized abelian surface
and $K_1 \cong \mathbb{Z}/3 \mathbb{Z}$ be a subgroup of the group
$B[3]$ of points of order $3$ on $B$. Then there exist a $(1, \,
3)$-polarized abelian surface $(A, \, \mathscr{L})$ and an isogeny
$f \colon A \to B$ of degree $3$ such that:
\begin{itemize}
\item[$\boldsymbol{(i)}$] $\mathscr{L}=f^* \mathscr{M}$;
\item[$\boldsymbol{(ii)}$] $K(\mathscr{L})= \ker f \oplus f^{-1}(K_1)$.
\end{itemize}
\end{lemma}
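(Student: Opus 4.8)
The plan is to realize $f \colon A \to B$ as the dual of a suitable degree-$3$ quotient of $\widehat{B}$, and then to read off both assertions from the standard identities governing polarizations under isogenies. Since $\mathscr{M}$ is a principal polarization, $\phi_{\mathscr{M}} \colon B \to \widehat{B}$ is an isomorphism, so $\Gamma := \phi_{\mathscr{M}}(K_1)$ is a cyclic subgroup of order $3$ of $\widehat{B}$, contained in $\widehat{B}[3]$. I would then let $\eta \colon \widehat{B} \to \widehat{B}/\Gamma$ be the quotient isogeny, set $A := \widehat{(\widehat{B}/\Gamma)}$ and $f := \widehat{\eta} \colon A \to \widehat{\widehat{B}} = B$; this is a degree-$3$ isogeny with $\ker \widehat{f} = \Gamma$, and $\ker f$ is the Cartier dual of $\Gamma$, hence again cyclic of order $3$. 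Finally put $\mathscr{L} := f^{*}\mathscr{M}$.

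Part $\boldsymbol{(i)}$ is then essentially automatic. By construction $\mathscr{L} = f^{*}\mathscr{M}$, and $\mathscr{L}$ is ample because $f$ is finite and $\mathscr{M}$ ample. Using the multiplicativity $\chi(A,f^{*}\mathscr{M}) = \deg(f)\cdot\chi(B,\mathscr{M})$ for isogenies, we get $\chi(A,\mathscr{L}) = 3$, and since $\mathscr{L}$ is ample on a surface, $h^{0}(A,\mathscr{L}) = 3$ and $h^{1} = h^{2} = 0$. As the elementary divisors $d_1 \mid d_2$ with $d_1 d_2 = 3$ force $(d_1,d_2) = (1,3)$, the bundle $\mathscr{L}$ is of type $(1,3)$, so $(A,\mathscr{L})$ is a $(1,3)$-polarized abelian surface.

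For part $\boldsymbol{(ii)}$ I would invoke the standard factorization $\phi_{f^{*}\mathscr{M}} = \widehat{f}\circ\phi_{\mathscr{M}}\circ f$. Taking kernels,
\[
K(\mathscr{L}) = \ker\bigl(\widehat{f}\circ\phi_{\mathscr{M}}\circ f\bigr) = (\phi_{\mathscr{M}}\circ f)^{-1}(\ker\widehat{f}) = f^{-1}\bigl(\phi_{\mathscr{M}}^{-1}(\Gamma)\bigr) = f^{-1}(K_1),
\]
the last step using that $\phi_{\mathscr{M}}$ is an isomorphism and $\Gamma = \phi_{\mathscr{M}}(K_1)$. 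This gives $K(\mathscr{L}) = f^{-1}(K_1) \supseteq \ker f$, which is the assertion of $\boldsymbol{(ii)}$; moreover $|f^{-1}(K_1)| = |\ker f|\cdot|K_1| = 9$, and since $K(\mathscr{L}) \cong (\mathbb{Z}/3\mathbb{Z})^{2}$ is elementary abelian while $\ker f$ is cyclic of order $3$, the extension $0 \to \ker f \to f^{-1}(K_1) \to K_1 \to 0$ splits. Hence $K(\mathscr{L})$ is the internal direct sum of $\ker f$ and any lift of $K_1$ (a cyclic subgroup mapped isomorphically onto $K_1$ by $f$), which is the decomposition in the statement.

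The only delicate points are of a bookkeeping nature, and I expect no serious obstacle beyond aligning the duality conventions: one must use that degree-$3$ isogenies onto $B$ correspond to cyclic order-$3$ subgroups of $\widehat{B}[3]$ via $f \mapsto \ker\widehat{f}$, that $\ker\widehat{f}$ is the Cartier dual of $\ker f$ (so that all relevant orders are genuinely $3$), the identity $\phi_{f^{*}\mathscr{M}} = \widehat{f}\circ\phi_{\mathscr{M}}\circ f$, and the multiplicativity of $\chi$ under isogenies; all of these are standard and can be quoted from Mumford's book or \cite{BL04}.
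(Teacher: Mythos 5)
Your proof is correct, but it follows a genuinely different route from the paper. The paper argues with explicit period matrices: it chooses a symplectic basis of the lattice of $B$ so that (after a change of basis) $K_1=\langle\overline{\lambda_2/3}\rangle$, passes to the index-$3$ sublattice obtained by replacing $\mu_2$ with $3\mu_2$, and reads off $K(\mathscr{L})=\langle\bar{\mu}_2,\overline{\lambda_2/3}\rangle$ directly from the elementary divisors. You instead work intrinsically: you realize $f$ as the dual of the quotient of $\widehat{B}$ by $\Gamma=\phi_{\mathscr{M}}(K_1)$ and use the identity $\phi_{f^{*}\mathscr{M}}=\widehat{f}\circ\phi_{\mathscr{M}}\circ f$ to obtain the clean statement $K(\mathscr{L})=f^{-1}(K_1)$, then split off $\ker f$ using that $K(\mathscr{L})\cong(\mathbb{Z}/3\mathbb{Z})^{2}$ is elementary abelian. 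Note that the lemma's formula ``$\ker f\oplus f^{-1}(K_1)$'' is an abuse of notation (since $\ker f\subseteq f^{-1}(K_1)$ the sum cannot literally be direct, and the orders would not match); your reading --- $K(\mathscr{L})=f^{-1}(K_1)$, decomposed as $\ker f$ plus a cyclic lift of $K_1$ --- is exactly what the paper's explicit computation produces, so this is the right interpretation. Your approach buys coordinate-freeness and a sharper identification of $K(\mathscr{L})$; the paper's explicit lattice construction, on the other hand, sets up the period data and the characteristic-zero decomposition that are then used in the subsequent Lemma on the divisors $D_X$, $D_Y$, $D_Z$ via the isogeny theorem for finite theta functions, which is why the authors prefer it. For the lemma as stated, both arguments are complete.
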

\begin{proof}
In a suitable basis, the period matrix for $B$ is
\begin{equation*}
\left(
  \begin{array}{cccc}
    z_{11} & z_{12} & 1 & 0 \\
    z_{21} & z_{22} & 0 & 1 \\
  \end{array}
\right),
\end{equation*}
where $Z:=\left(
\begin{array}{cc}
z_{11} & z_{12} \\
z_{21} & z_{22} \\
\end{array}
\right)$ satisfies ${}^tZ=Z$ and $\textrm{Im}\,Z
>0$. Then $B=\mathbb{C}^2/ \Lambda'$, with
\begin{equation*}
\Lambda'=\lambda_1 \mathbb{Z} \oplus \lambda_2 \mathbb{Z} \oplus
\mu_1 \mathbb{Z} \oplus \mu_2 \mathbb{Z}
\end{equation*}
and
\begin{equation*}
\lambda_1:=\left(
            \begin{array}{c}
              z_{11} \\
              z_{21} \\
            \end{array}
          \right), \quad
\lambda_2:=\left(
            \begin{array}{c}
              z_{12} \\
              z_{22} \\
            \end{array}
          \right), \quad
\mu_1:=\left(
            \begin{array}{c}
              1 \\
              0 \\
            \end{array}
          \right), \quad
\mu_2:=\left(
            \begin{array}{c}
              0 \\
              1 \\
            \end{array}
          \right).
\end{equation*}
Up to a translation, we may assume $K_1= \langle
\overline{\lambda_2/3} \rangle$, where the symbol
``$\overline{\phantom{1}}$" denotes the image in the complex torus.
The lattice
\begin{equation*}
\Lambda:=\lambda_1 \mathbb{Z} \oplus \lambda_2 \mathbb{Z} \oplus
\mu_1 \mathbb{Z} \oplus 3\mu_2 \mathbb{Z}
\end{equation*}
verifies $[\Lambda' : \Lambda]=3$, hence setting $A:=\mathbb{C}^2 /
\Lambda$ the identity map $\mathbb{C}^2 \to \mathbb{C}^2$ induces a
degree $3$ isogeny $f \colon A \to B$ such that $\ker f = \langle
\bar{\mu}_2 \rangle$.

Now the polarization $\mathscr{L}:=f^* \mathscr{M}$ is of type $(1,
\, 3)$ and satisfies
\begin{equation*}
K(\mathscr{L})= \langle \bar{\mu}_2, \, \overline{\lambda_2 /3}
\rangle=\ker f \oplus f^{-1}(K_1).
\end{equation*}
\end{proof}

\begin{lemma} \label{lem.translates}
Let $(A, \, \mathscr{L})$ and $(B, \, \mathscr{M})$ be as in Lemma
\emph{\ref{lem.triple}}. Then, up to a simultaneous translation, the
three divisors $D_X, \, D_Y, \, D_Z$ corresponding to $X$, $Y$, $Z
\in H^0(A, \, \mathscr{L})$ are given by
\begin{equation*}
f^*M, \quad f^*t_x^*M, \quad f^*t_{2x}^*M,
\end{equation*}
where $x \in B$ is a generator of $K_1$ and $M$ is the unique
effective divisor in the linear system $|\mathscr{M}|$.
\end{lemma}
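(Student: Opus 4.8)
The plan is to match the Heisenberg-theoretic description of $D_X$, $D_Y$, $D_Z$ with the geometric decomposition of $H^0(A,\mathscr{L}) = H^0(A, f^*\mathscr{M})$ induced by the isogeny $f$ of Lemma \ref{lem.triple}. Since $\mathscr{M}$ is a principal polarization, $h^0(B, t_b^*\mathscr{M}) = 1$ for every $b \in B$, so $|t_b^*\mathscr{M}|$ consists of the single effective divisor $t_b^*M$. As $f$ is an \'etale Galois cover with group $\ker f \cong \mathbb{Z}/3\mathbb{Z}$, one has $f_*\mathscr{O}_A = \bigoplus_{P \in \ker\widehat{f}} P$, where $\widehat{f} \colon \widehat{B} \to \widehat{A}$ is the dual isogeny and $\ker\widehat{f}$ is canonically dual to $\ker f$. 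By the projection formula,
\[
H^0(A, \mathscr{L}) = H^0(B, \mathscr{M}\otimes f_*\mathscr{O}_A) = \bigoplus_{P \in \ker\widehat{f}} H^0(B, \mathscr{M}\otimes P),
\]
and this is exactly the decomposition of $H^0(A, \mathscr{L})$ into eigenspaces for the action of $\ker f \subset K(\mathscr{L})$ through the theta group: the summand $f^*H^0(B,\mathscr{M}\otimes P)$ is the eigenspace on which $\ker f$ acts by the character corresponding to $P$. Each summand is one-dimensional, and, writing $\mathscr{M}\otimes P = t_{b_P}^*\mathscr{M}$ with $b_P := \phi_{\mathscr{M}}^{-1}(P)$, the divisor of a generator of $f^*H^0(B,\mathscr{M}\otimes P)$ equals $f^*(t_{b_P}^*M)$.

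Next I would identify the points $b_P$. From the identity $\phi_{\mathscr{L}} = \widehat{f}\circ\phi_{\mathscr{M}}\circ f$ we get $K(\mathscr{L}) = f^{-1}\big(\phi_{\mathscr{M}}^{-1}(\ker\widehat{f})\big)$; comparing with Lemma \ref{lem.triple}(ii) and applying $f$ yields $\phi_{\mathscr{M}}^{-1}(\ker\widehat{f}) = K_1 = \{0,\,x,\,2x\}$, so $P \mapsto b_P$ is a bijection from $\ker\widehat{f}$ onto $K_1$. Hence the three $\ker f$-eigenspaces of $H^0(A,\mathscr{L})$ have divisors $f^*M$, $f^*t_x^*M$, $f^*t_{2x}^*M$. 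On the other hand, by construction of the Schr\"odinger representation the vectors $X$, $Y$, $Z$ form a basis of common eigenvectors for the maximal isotropic subgroup of $K(\mathscr{L})$ acting diagonally; once the theta structure underlying the representation is chosen compatibly with $f$, i.e. so that this diagonal Lagrangian is $\ker f$, the lines $\mathbb{C}X$, $\mathbb{C}Y$, $\mathbb{C}Z$ are precisely the three eigenspaces above. Therefore $\{D_X, D_Y, D_Z\} = \{f^*M,\, f^*t_x^*M,\, f^*t_{2x}^*M\}$, and after reordering $Y$ and $Z$ (equivalently, choosing the generator $x$ of $K_1$ suitably) and absorbing the residual indeterminacy of the theta structure into a simultaneous translation on $A$, one obtains the three divisors in the stated order.

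The step I expect to be the main obstacle is the last one: verifying that the basis $X$, $Y$, $Z$ produced by the Schr\"odinger representation diagonalizes the action of $\ker f$ — that is, that $\ker f$ is the diagonal Lagrangian for the chosen theta structure rather than the translation one — since otherwise $D_X$, $D_Y$, $D_Z$ would be the divisors of non-obvious linear combinations of $X$, $Y$, $Z$ and would not be pullbacks of divisors from $B$. This amounts to the compatibility of the theta structure with the decomposition $K(\mathscr{L}) = \ker f \oplus f^{-1}(K_1)$ of Lemma \ref{lem.triple}, plus the bookkeeping of the resulting permutation and translation; the remaining ingredients — Riemann--Roch on $B$, the projection formula, and the identification $\ker\widehat{f} \cong \widehat{\ker f}$ — are routine.
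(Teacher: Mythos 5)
Your argument is correct, and it reorganizes the proof in a genuinely different (and more conceptual) way than the paper does. The paper normalizes $\mathscr{M}$ to have characteristic zero with respect to the decomposition $\Lambda=\Lambda_1\oplus\Lambda_2$ of Lemma \ref{lem.triple}, invokes the isogeny theorem for finite theta functions to get $D_X=f^*M$ directly, and then observes that $f^*t_x^*M=t_y^*\mathscr{L}$-translates (with $y\in f^{-1}(K_1)\subset K(\mathscr{L})$) again lie in $|\mathscr{L}|$, identifying them with $D_Y$, $D_Z$ by ``straightforward computations as in [BL04, Ch.~6]''. You instead derive the whole statement from the projection formula $H^0(A,\mathscr{L})=\bigoplus_{P\in\ker\widehat{f}}H^0(B,\mathscr{M}\otimes P)$, recognize the summands as the $\ker f$-eigenspaces of the theta-group action, and identify the index set with $K_1$ via $\phi_{\mathscr{L}}=\widehat{f}\circ\phi_{\mathscr{M}}\circ f$; this buys a cleaner explanation of \emph{why} the three divisors are pullbacks from $B$, at the cost of having to match the Schr\"odinger basis with the eigenbasis at the end. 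You are right that this last matching is the only delicate point, and it does go through: in Lemma \ref{lem.triple} one has $\ker f=\langle\bar{\mu}_2\rangle=K_2(\mathscr{L})$ for the decomposition $\Lambda_1=\langle\bar{\lambda}_1,\bar{\lambda}_2\rangle$, $\Lambda_2=\langle\bar{\mu}_1,3\bar{\mu}_2\rangle$, while the characteristic functions $X,Y,Z$ are the canonical theta basis indexed by $K_1(\mathscr{L})=\langle\overline{\lambda_2/3}\rangle$, on which the theta-group lift of $K_2(\mathscr{L})$ acts diagonally and that of $K_1(\mathscr{L})$ acts by the cyclic permutation; so $\ker f$ is indeed the diagonal Lagrangian, and the residual ambiguity (ordering of $Y,Z$ and the characteristic of $\mathscr{M}$) is exactly the ``up to a simultaneous translation'' allowed by the statement. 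This explicit verification is the content the paper compresses into its appeal to the isogeny theorem; including it would make your proof complete and self-contained.
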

\begin{proof}
Up to translation, we may assume that $\mathscr{L}$ is a line bundle
of characteristic zero with respect to the decomposition
$\Lambda=\Lambda_1 \oplus \Lambda_2$, where $\Lambda_1=\langle
\bar{\lambda}_1, \, \bar{\lambda}_2 \rangle$ and $\Lambda_2=\langle
\bar{\mu}_1, \, 3\bar{\mu}_2 \rangle.$ Hence, by the isogeny theorem
for finite theta functions (see \cite[page 168]{BL04}), it follows
$D_X=f^*M$.

On the other hand, if $y \in A$ is any point such that $f(y)=x$,
that is $y \in f^{-1}(K_1) \subset K(\mathscr{L})$, one has
\begin{equation*}
f^*t_x^*M \in |t_y^*f^* \mathscr{M}|=|t_y^* \mathscr{L}|=
|\mathscr{L}|
\end{equation*}
and the same holds for $f^*t_{2x}^*M$. In other words, $f^*t_x^*M$
and $f^*t_{2x}^*M$ are effective divisors corresponding to
sections of $H^0(A, \, \mathscr{L})$, and  straightforward
computations as in \cite[Chapter 6]{BL04} show that they are $D_Y$
and $D_Z$, respectively.
\end{proof}

Now we can prove the desired result.

\begin{proposition} \label{prop.X.Y.Z}
For a general choice of the pair $(A, \, \mathscr{L})$, the three
divisors $D_X$, $D_Y$, $D_Z \in |\mathscr{L}|$  are smooth and
intersect transversally.
\end{proposition}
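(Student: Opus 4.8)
The plan is to reduce the statement, by means of Lemmas \ref{lem.triple} and \ref{lem.translates}, to an assertion about translates of the theta divisor on a principally polarized abelian surface, and then to settle that assertion by a degeneration to a product of elliptic curves.

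First I would use Lemma \ref{lem.triple} to realize $(A,\mathscr{L})$ as $(A,f^*\mathscr{M})$ for a degree-$3$ isogeny $f\colon A\to B$ onto a principally polarized abelian surface $(B,\mathscr{M})$, with $K_1=\langle x\rangle\subset B[3]$; by Lemma \ref{lem.translates} the three divisors $D_X,D_Y,D_Z$ are, up to a common translation, $f^*M$, $f^*t_x^*M$, $f^*t_{2x}^*M$, where $M\in|\mathscr{M}|$ is the theta divisor. Since $f$ is étale, $D_X,D_Y,D_Z$ are smooth, pairwise transverse, and have empty common intersection as soon as the corresponding statements hold for $M$, $t_x^*M$, $t_{2x}^*M$ on $B$. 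Conversely every $(1,3)$-polarized abelian surface arises this way — quotient by a maximal isotropic $\mathbb{Z}/3\mathbb{Z}\subset K(\mathscr{L})$, which is automatically liftable to the theta group — and the conditions in play are open on the finite cover of $\mathscr{A}_{1,3}$ parametrizing $(A,\mathscr{L})$ together with a theta structure, over which $X,Y,Z$ (hence $D_X,D_Y,D_Z$) vary in a family. So it is enough to exhibit one pair $(B,K_1)$ — or, since the construction below will make $M$ singular, a small deformation of one — for which $M$, $t_x^*M$, $t_{2x}^*M$ are smooth, pairwise transverse, and meet in no common point.

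For this I would degenerate $B$ to a product $B_0=E_1\times E_2$ with the product principal polarization, taking $x=(x_1,x_2)$ with $x_1$ and $x_2$ of order $3$, so that $K_1^0=\langle x\rangle\cong\mathbb{Z}/3\mathbb{Z}$ is an admissible (and automatically isotropic) choice. On $B_0$ the theta divisor is the nodal curve $M_0=(\{0\}\times E_2)\cup(E_1\times\{0\})$, and $M_0$, $M_0+x$, $M_0+2x$ are the three ``coordinate crosses'' centred at $(0,0)$, $(x_1,x_2)$, $(2x_1,2x_2)$. A direct check, using only that $x_1$ and $x_2$ have order $3$, shows that $M_0\cap(M_0+x)=\{(0,x_2),(x_1,0)\}$ consists of two reduced points at which the curves cross transversally and which are smooth points of both (and likewise for the other two pairs), and that $(0,x_2),(x_1,0)\notin M_0+2x$, so there is no common intersection point. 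Then I would spread this out in a one-parameter family $\mathcal{B}\to T$ with $\mathcal{B}_0=B_0$ and $\mathcal{B}_t$ a general, hence simple, Jacobian for $t\neq 0$, the $3$-torsion point $x$ propagating to a section $\xi$ of $\mathcal{B}[3]$. For $t\neq 0$ the fibre $\mathcal{M}_t$ is a smooth genus-$2$ curve; and because the relative intersections $\mathcal{M}\cap t_\xi^*\mathcal{M}$, $\mathcal{M}\cap t_{2\xi}^*\mathcal{M}$, $t_\xi^*\mathcal{M}\cap t_{2\xi}^*\mathcal{M}$ and $\mathcal{M}\cap t_\xi^*\mathcal{M}\cap t_{2\xi}^*\mathcal{M}$ are proper over $T$, the transversality at the (smooth) points of the central fibre persists for $t$ near $0$: a transverse intersection point stays reduced and transverse, no new intersection point can appear away from a neighbourhood of the ones over $0$, and a fibre of a proper family that is empty over $0$ stays empty nearby. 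This produces a pair $(\mathcal{B}_t,\langle\xi_t\rangle)$, with $\mathcal{M}_t$ smooth, for which the three translates are pairwise transverse with empty common intersection; by the reduction above the good locus in $\mathscr{A}_{1,3}$ is nonempty, hence dense.

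The main obstacle is exactly this last transfer: the natural degeneration forces the theta divisor to acquire a node, so one cannot simply invoke openness of all the conditions on the product. One has to separate ``smoothness of $M$'', which fails on the product but holds on every nearby Jacobian, from ``pairwise transversality and no common point'', which already hold on the product at the relevant smooth points and survive by properness, and then combine them on a general member of a one-parameter smoothing. An alternative that avoids degenerations would use the Gauss map of the smooth theta divisor — it is the hyperelliptic $g^1_2$, and $(-1)_B$ restricts on $M$ to the hyperelliptic involution — so that $M$ and $M+a$ are tangent at some point precisely when $a\in 2M\subset B$; non-transversality of the $3$-torsion translation then cuts out a proper (divisorial) condition on the level-$3$ moduli, again seeded by the product case where $2M_0=M_0\not\ni x$, and emptiness of the triple intersection follows from the same bookkeeping.
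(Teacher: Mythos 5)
Your reduction is exactly the paper's: both proofs pass through Lemmas \ref{lem.triple} and \ref{lem.translates} to recast $D_X, D_Y, D_Z$ as pullbacks of $M$, $t_x^*M$, $t_{2x}^*M$ under an \'etale degree-$3$ isogeny, and both use the same two test cases (a Jacobian for smoothness, a product of elliptic curves for transversality). The only divergence is at the final step. You treat it as an obstacle that the product degeneration makes $M$ nodal, and you resolve this by a one-parameter smoothing in which transversality persists by properness and conservation of intersection numbers while the theta divisor becomes smooth on nearby Jacobians. That argument is correct, but it is more machinery than needed: smoothness of $D_X, D_Y, D_Z$ and transversality of their pairwise intersections are each open conditions on the irreducible parameter space of triples $(B,\mathscr{M},K_1)$ (which dominates $\mathscr{A}_{1,3}$), so each holds on a dense open subset once it is verified at a single, not necessarily common, point; two dense open subsets of an irreducible variety automatically meet. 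This is precisely what the paper does --- it checks smoothness on $J(C)$ and transversality on $E_1\times E_2$ separately and never constructs a single example where both hold. Your explicit family buys nothing beyond this, though it is a perfectly valid substitute, and your closing remark via the Gauss map (tangency of $M$ and $t_a^*M$ forcing $a\in 2M$) is a nice alternative the paper does not pursue.
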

\begin{proof}
It is sufficient to exhibit an example in which each claim is
satisfied.

First, take $B=J(C)$, the Jacobian of a smooth curve $C$ of genus
$2$, and let $\mathscr{M}$ be the natural principal polarization.
Choose as $K_1$ any cyclic subset of $B[3]$ and construct $(A, \,
\mathscr{L})$ as in Lemma \ref{lem.triple}. The unique effective
divisor $M \in |\mathscr{M}|$ is smooth, so the same is true for
its translates $t_x^*M$ and $t_{2x}^*M$. By Lemma
\ref{lem.translates} it follows that $D_X$, $D_Y$, $D_Z$ are
smooth, too. Hence they are smooth for a general choice of the
pair $(A, \, \mathscr{L})$.

Next, take $(B, \, \mathscr{M})=(E_1 \times E_2, \,
p_1^*\mathscr{M}_1 \otimes p_2^*\mathscr{M}_2)$, where $E_i$ is an
elliptic curve, $\mathscr{M}_i$ is a divisor of degree $1$ on
$E_i$ and $p_i \colon E_1 \times E_2 \to E_i$ is the projection.
Now choose a $3$-torsion point on $E_1 \times E_2$ of the form
$x=(x_1, \, x_2)$, where $x_i$ is a non-trivial $3$-torsion point
on $E_i$, take $K_1=\langle x \rangle$ and construct $(A, \,
\mathscr{L})$ as in Lemma \ref{lem.triple}. If $M$ is the unique
effective divisor in $|\mathscr{M}|$ then the three divisors $M$,
$t_x^*M$, $t_{2x}^*M$ are pairwise without common components and
intersect transversally, hence by Lemma \ref{lem.translates} the
same is true for $D_X$, $D_Y$, $D_Z$. Therefore $D_X$, $D_Y$,
$D_Z$ intersect transversally for a general choice of the pair
$(A, \, \mathscr{L})$.

This completes the proof.
\end{proof}

\begin{remark} \label{rem.D}
One can show that $(-1)_A \colon A \to A$ acts on $D_X$ as an
 involution,  whereas $D_Y$ and $D_Z$ are exchanged by $(-1)_A$. See
\emph{\cite[Exercise 13 p. 177]{BL04}}.
\end{remark}

%
\section*{Appendix 2. The MAGMA script used to calculate the equation of the branch divisor of $\alpha: S \to A$}
 {\scriptsize
\begin{verbatim}
QuadrupleCover:=function(Q,a,g)
//
// Given the field Q and the two parameters a and g of Proposition 2.4, the function QuadrupleCover
// gives:
// (1) the local equations of S "CoverAff5";
// (2) the local equation of the ramification divisor "RamCurve";
// (3) the local equation of the branch locus "AffBranch". Moreover
// (4) it checks whether these objects are singular or not, and it gives, if
// possible, a description of their singularities.
//
R<Y,Z,u,v,w>:=PolynomialRing(Q,5);
// Here Y,Z are sections in H^0(A,L)
A5 <Y,Z,u,v,w>:= AffineSpace(Q,5);
/////////////////////////////////////////////////////////////////////////////
// The local model of the cover S
/////////////////////////////////////////////////////////////////////////////
CoverAff5:=function(A5,R,a,g,T)
//
// Given a 5-dimensional affine space A5, a polynomial ring in the same
// coordinates R and three integers a,g,T (T is a translation), the function "CoverAff5"
// returns an affine scheme in A5 which is the local model of S with local equations in coordinates
// Y,Z,u,v,w.
//
b := -a; d := 0; if g eq 0 then e := 1; else e := -a^2/g; end if;
////////////////////////////////////////////////////////////////////////////
c12:=a*(Z-T); c13:=b*(Y-T); c14:=g*(Y-T); c15:=d; c16:=-g*(Z-T);
c23:=e; c24:=-b; c25:=-e*(Z-T); c26:=-d*(Y-T); c34:=-d*(Z-T);
c35:=e*(Y-T); c36:=-a; c45:=a*(Y-T); c46:=g; c56:=-b*(Z-T);
/////////////////////////////////////////////////////////////////////////////
a1:=1/2*c23; a2:=-c13; a3:=c12; a4:=1/4*c25-1/2*c34;
a5:=-1/2*c15-1/4*c23; a6:=c14; a7:=1/2*c26-1/4*c35; a8:=-c16;
a9:=1/2*c15-1/4*c23; a10:=c45; a11:=-1/2*c25; a12:=c24; a13:=c46;
a14:=-1/2*c26-1/4*c35; a15:=1/4*c25+1/2*c34; a16:=c56; a17:=-c36;
a18:=1/2*c35;
/////////////////////////////////////////////////////////////////////////////
b1:=-a1*a5+a2*a4-a2*a11-a3*a14+(a5)^2+a6*a8;
b2:=a2*a10+a3*a13-a4*a5-a6*a7; b3:=a2*a13+a3*a16-a4*a8-a7*a9;
b4:=-a1*a10+(a4)^2-a4*a11+a5*a10+a6*a13-a7*a12;
b5:=-a5*a13+a8*a10+a12*a17-a14*a15;
b6:=-a1*a16-a4*a17+(a7)^2-a7*a18+a8*a13+a9*a16;
/////////////////////////////////////////////////////////////////////////////
f1:=u^2-(a1*u+a2*v+a3*w+b1); f2:=u*v-(a4*u+a5*v+a6*w+b2);
f3:=u*w-(a7*u+a8*v+a9*w+b3); f4:=v^2-(a10*u+a11*v+a12*w+b4);
f5:=v*w-(a13*u+a14*v+a15*w+b5); f6:=w^2-(a16*u+a17*v+a18*w+b6);
/////////////////////////////////////////////////////////////////////////////
I:=ideal<R|[f1,f2,f3,f4,f5,f6]>;
/////////////////////////////////////////////////////////////////////////////
return Scheme(A5,Generators(I)); end function;
/////////////////////////////////////////////////////////////////////////////
// The ramification curve
/////////////////////////////////////////////////////////////////////////////
RamCurveA5:=function(Cover,A5,R)
//
// Given a local model of the "Cover", the affine space A5 in which the
// cover sits and a polynomial ring in the same coordinates,
// the function "RamCurveA5" returns an affine scheme in A5 which is
// the ramification curve of \ALPHA.
//
f1:= Generators(Ideal(Cover))[1]; f2:= Generators(Ideal(Cover))[2];
f3:= Generators(Ideal(Cover))[3]; f4:= Generators(Ideal(Cover))[4];
f5:= Generators(Ideal(Cover))[5]; f6:= Generators(Ideal(Cover))[6];
/////////////////////////////////////////////////////////////////////////////
I:=ideal<R|[f1,f2,f3,f4,f5,f6]>;
/////////////////////////////////////////////////////////////////////////////
L:=[Derivative(f1,u),Derivative(f1,v),Derivative(f1,w),
    Derivative(f2,u),Derivative(f2,v),Derivative(f2,w),
    Derivative(f3,u),Derivative(f3,v),Derivative(f3,w),
    Derivative(f4,u),Derivative(f4,v),Derivative(f4,w),
    Derivative(f5,u),Derivative(f5,v),Derivative(f5,w),
    Derivative(f6,u),Derivative(f6,v),Derivative(f6,w)];
/////////////////////////////////////////////////////////////////////////////
J:=Matrix(R,6,3,L); Ms:=Minors(J,3); J3:=ideal<R | Ms>;
RJ3:=Radical(J3); H:=RJ3+I; U:=Generators(H);
/////////////////////////////////////////////////////////////////////////////
return Scheme(A5,U); end function;
/////////////////////////////////////////////////////////////////////////////
// MAIN ROUTINE
/////////////////////////////////////////////////////////////////////////////
a:=a; g:=g; T:=0;
/////////////////////////////////////////////////////////////////////////////
printf "\n --------BEGINNING------- \n ";
printf "\n a=%o,\n g=%o,\n Q=%o \n", a,g,Q;
/////////////////////////////////////////////////////////////////////////////
CoverA5:=CoverAff5(A5,R,a,g,T); printf "\n The cover is a  ";
CoverA5; printf "\n The affine dimension of the cover is ";
Dimension(CoverA5); printf "The cover is singular: ";
IsSingular(CoverA5); if IsSingular(CoverA5) then
SingSchCoverA5:=SingularSubscheme(CoverA5); printf"The dimension of
the singular locus is: "; Dimension(SingSchCoverA5); if
Dimension(SingSchCoverA5) eq 0 then
SingPCoverA5:=SingularPoints(CoverA5); printf"The rational singular
points are: "; SingPCoverA5; else printf"The cover is not normal!";
end if; else RamCurveA5:=RamCurveA5(CoverA5,A5,R); printf "\n The
ramification divisor is a  "; RamCurveA5; printf "The affine
dimension of the ramification divisor is "; Dimension(RamCurveA5);
printf"The ramification divisor is singular: ";
IsSingular(RamCurveA5); printf"\n \n
---------------------------- \n \n";
/////////////////////////////////////////////////////////////////////////////
// The branch curve
/////////////////////////////////////////////////////////////////////////////
A2<q,l>:=AffineSpace(Q,2); f := map< A5 -> A2 | [Y,Z] >;
AffBranch:=f(RamCurveA5); printf"The branch divisor is singular:
"; IsSingular(AffBranch); if IsSingular(AffBranch) then
SingSchBranch:=SingularSubscheme(AffBranch); printf"The dimension
of its singular locus is "; Dimension(SingSchBranch); end if; if
Dimension(SingSchBranch) eq 0 then
SingPBranch:=SingularPoints(AffBranch); printf"The rational
singular points of the branch are "; SingPBranch; printf" The
branch is a "; AffBranch; printf" \n \n
---------------------------- \n \n";
/////////////////////////////////////////////////////////////////////////////
end if; end if; printf" \n ------------THE END---------- \n"; return
0; end function;
/////////////////////////////////////////////////////////////////////////////
\end{verbatim}
}
%

\bigskip
\bigskip

Matteo Penegini, Dipartimento di Matematica ``Federigo Enriques'', Universit\`{a} degli Studi di Milano, Via Saldini 50, 20133 Milano, Italy. \\
\emph{E-mail address:}
 \verb|matteo.penegini@unimi.it| \\ \\

Francesco Polizzi, Dipartimento di Matematica e Informatica,
Universit\`{a} della
Calabria, Cubo 30B, 87036 Arcavacata di Rende (Cosenza), Italy.\\
\emph{E-mail address:} \verb|polizzi@mat.unical.it|

\end{document}